\documentclass[12pt]{article}

\usepackage{tikz}
\usepackage{nameref}
\usepackage{empheq}
\usepackage{comment}
\usepackage[shortlabels,inline]{enumitem}
\setlist[enumerate]{nosep}
\usepackage[colorlinks=true,
linkcolor=refkey,
urlcolor=lblue,
citecolor=red]{hyperref}
\usepackage[doc,wmm,hhb]{optional}
\usepackage{xcolor}

\usepackage{float}
\usepackage{soul}
\usepackage{graphicx}
\definecolor{labelkey}{rgb}{0,0.08,0.45}
\definecolor{refkey}{rgb}{0,0.6,0.0}
\definecolor{Brown}{rgb}{0.45,0.0,0.05}
\definecolor{lime}{rgb}{0.00,0.8,0.0}
\definecolor{lblue}{rgb}{0.5,0.5,0.99}
\definecolor{OliveGreen}{rgb}{0,0.6,0}
\definecolor{tyrianpurple}{rgb}{0.4, 0.01, 0.24}
\usepackage{mathpazo}



\colorlet{hlcyan}{cyan!30}

\usepackage{stmaryrd}
\usepackage{amssymb}
\oddsidemargin -0.1cm
\textwidth  16.5cm
\topmargin  -0.1cm
\headheight 0.0cm
\textheight 21.2cm
\parindent  4mm
\parskip    10pt 
\tolerance  3000

\hyphenation{non-empty}

\makeatletter
\def\namedlabel#1#2{\begingroup
	\def\@currentlabel{#2}%
	\label{#1}\endgroup
}
\makeatother

\newcommand{\seppthree}{\setlength{\itemsep}{-3pt}}

\oddsidemargin -0.1cm
\textwidth  16.5cm
\topmargin  -0.1cm
\headheight 0.0cm
\textheight 21.2cm
\parindent  4mm
\parskip    10pt 
\tolerance  3000

\newcommand{\SE}{\ensuremath{{\mathcal S}}}

\newcommand{\J}[1]{\ensuremath{{\operatorname{J}}_{#1}}}
\newcommand{\R}[1]{\ensuremath{{\operatorname{R}}_%
		{#1}}}
\newcommand{\Pj}[1]{\ensuremath{{\operatorname{P}}_%
		{#1}}}

\newcommand{\N}[1]{\ensuremath{{\operatorname{N}}_%
		{#1}}}

\providecommand{\siff}{\Leftrightarrow}
\newcommand{\weakly}{\ensuremath{\:{\rightharpoonup}\:}}

\newcommand{\nnn}{\ensuremath{{n\in{\mathbb N}}}}

\newcommand{\menge}[2]{\big\{{#1}~\big |~{#2}\big\}}
\newcommand{\tmenge}[2]{\{{#1}~|~{#2}\}}

\newcommand{\fenv}[1]%
{\ensuremath{\,\overrightarrow{\operatorname{env}}_{#1}}}
\newcommand{\benv}[1]%
{\ensuremath{\,\overleftarrow{\operatorname{env}}_{#1}}}

\newcommand{\sign}{\ensuremath{\operatorname{sign}}}

\newcommand{\scal}[2]{\left\langle{#1},{#2}  \right\rangle}
\newcommand{\sscal}[2]{\langle{#1},{#2}  \rangle}

\newcommand{\RR}{\ensuremath{\mathbb R}}

\newcommand{\dom}{\ensuremath{\operatorname{dom}}}

\DeclareMathOperator*{\argmin}{argmin}
\newcommand{\prox}{\ensuremath{\operatorname{P}}}
\newcommand{\refl}{\ensuremath{\operatorname{R}}}
\newcommand{\ran}{\ensuremath{{\operatorname{ran}}\,}}
\newcommand{\zer}{\ensuremath{\operatorname{zer}}}

\newcommand{\cdom}{\ensuremath{\overline{\operatorname{dom}}\,}}

\newcommand{\Id}{\ensuremath{\operatorname{Id}}}

\newcommand{\TAB}{\operatorname{T}_{A,B}}

\newcommand{\veet}{\ensuremath{{\scriptscriptstyle\vee}}} 
\newcommand{\oveet}{\ensuremath{{\scriptscriptstyle\ovee}}} 

{\begin{list}{}{%
			\settowidth{\labelwidth}{\textrm{#1~}}%
			\setlength{\leftmargin}{\labelwidth+\labelsep}}}
	{\end{list}}
\usepackage{amsthm}
\usepackage[capitalize,nameinlink]{cleveref}
\crefname{equation}{}{equations}
\crefname{chapter}{Appendix}{chapters}
\crefname{item}{}{items}
\crefname{enumi}{}{}
\newtheorem{theorem}{Theorem}[section]
\newtheorem{lemma}[theorem]{Lemma}

\newtheorem{corollary}[theorem]{Corollary}

\newtheorem{proposition}[theorem]{Proposition}

\newtheorem{example}[theorem]{Example}

\newtheorem{fact}[theorem]{Fact}
\newtheorem{remark}[theorem]{Remark}




\providecommand{\abs}[1]{\lvert#1\rvert}
\providecommand{\norm}[1]{\lVert#1\rVert}

\providecommand{\stb}[1]{\left\{#1\right\}}
\providecommand{\innp}[1]{\langle#1\rangle}

\providecommand{\LA}{\Leftarrow}
\providecommand{\RA}{\Rightarrow}

\providecommand{\RR}{\mathbb{R}}

\providecommand{\ran}{\operatorname{ran}}

\providecommand{\dom}{\operatorname{dom}}

\newcommand{\fix}{\ensuremath{\operatorname{Fix}}}

\providecommand{\gr}{\operatorname{gra}}
\providecommand{\gra}{\operatorname{gra}}
\providecommand{\Id}{\operatorname{{ Id}}}

\providecommand{\fady}{\varnothing}

\providecommand{\rras}{\rightrightarrows}

\providecommand{\gr}{\operatorname{gra}}
\providecommand{\fix}{\operatorname{Fix}}
\providecommand{\ran}{\operatorname{ran}}
\providecommand{\rec}{\operatorname{rec}}
\providecommand{\Id}{\operatorname{Id}}

\providecommand{\zer}{\operatorname{zer}}
\providecommand{\R}{{ R}}

\providecommand{\fady}{\varnothing}

\newcommand{\cran}{\ensuremath{\overline{\operatorname{ran}}\,}}

\providecommand{\ri}{\operatorname{ri}}

\providecommand{\RR}{\mathbb{R}}

\definecolor{myblue}{rgb}{.8, .8, 1}

\newcommand*\mybluebox[1]{%
	\colorbox{myblue}{\hspace{1em}#1\hspace{1em}}}

\allowdisplaybreaks 

\begin{document}
	
	%

	\author{
		Heinz H.\ Bauschke\thanks{
			Mathematics, University
			of British Columbia,
			Kelowna, B.C.\ V1V~1V7, Canada. E-mail:
			\texttt{heinz.bauschke@ubc.ca}.}~~~and~
		Walaa M.\ Moursi\thanks{
			Department of Combinatorics and Optimization, 
      University of Waterloo,
      Waterloo, Ontario N2L~3G1, Canada.
			E-mail: \texttt{walaa.moursi@uwaterloo.ca}.}
	}

	\title{\textsf{
			On the Douglas--Rachford algorithm for solving\\
			possibly inconsistent optimization problems
		}
	}

	\date{June 21, 2021}
	
	\maketitle
	
	\begin{abstract}
  More than 40 years ago, Lions and Mercier introduced in a seminal paper the Douglas--Rachford algorithm. Today, this method is well recognized as a classical and 
  highly successful splitting method to find minimizers of the sum of 
  two (not necessarily smooth) convex functions. 
  While the underlying theory has matured, one case 
  remains a mystery: the behaviour of the shadow sequence when the given 
  functions have disjoint domains. 
  
  Building on previous work, we establish for the first time 
  weak and value convergence of the shadow sequence generated by
  the Douglas--Rachford algorithm in a setting of unprecedented generality. 
  The weak limit point is shown to solve
  the associated normal problem which is a minimal perturbation of the original
  optimization problem. We also present new results on the geometry of the minimal displacement vector. 
	\end{abstract}
	{
		\noindent
		{\bfseries 2020 Mathematics Subject Classification:}
    65K10,
    90C25; 
		Secondary
		47H05. 
	}

	\noindent {\bfseries Keywords:}
	{
  convex functions,
  convex optimization problem,
  Douglas--Rachford algorithm,
  inconsistent optimization problem,
  minimal displacement vector, 
  normal problem,
  proximal mapping,
  resolvent. 
	}

	\section{Introduction}
	
	\subsection{Problem statement and contribution}
	\label{sec:intromain}
	Throughout, we assume that
	\begin{empheq}[box=\mybluebox]{equation}
		\text{$X$ is
			a real Hilbert space space with inner product
			$\scal{\cdot}{\cdot}\colon X\times X\to\RR$, }
	\end{empheq}
	and induced norm $\|\cdot\|$,
	and that
	\begin{empheq}[box=\mybluebox]{equation}
\label{e:AandB}
		\text{$A$ and $B$ are maximally monotone operators on $X$.}
	\end{empheq}
	We set $\dom A \coloneqq \menge{x\in X}{Ax\neq\fady}$ and
	$\ran A\coloneqq A(X)\coloneqq \bigcup_{x\in X}Ax$.
	Recall that the associated \emph{Douglas--Rachford operator} is
	\begin{empheq}[box=\mybluebox]{equation}
		\label{e:def:T}
		T \coloneqq \TAB \coloneqq \Id -\J{A} + \J{B}\R{A},
	\end{empheq}
	where $\J{A}\coloneqq (\Id+A)^{-1}$ and $\R{A} \coloneqq 2\J{A}-\Id$ are
	the \emph{resolvent} and \emph{reflected resolvent} of $A$, respectively.
	(See, e.g., \cite{BC} for background material and further references.)
	It is well known (see \cite{Svaiter}) that if $A+B$ admits a zero,
	i.e., $Z_0 \coloneqq \zer(A+B)\coloneqq (A+B)^{-1}(0)\neq\fady$,
	then the sequence $(\J{A}T^nx)_\nnn$ converges weakly to a point in $Z_0$.
	This is the celebrated \emph{Douglas--Rachford algorithm} which dates
	back to Douglas and Rachford \cite{DougRach} but whose importance to optimization was
	revealed in the seminal paper by Lions and Mercier \cite{LionsMercier}.
	In particular, if $f$ and $g$ are proper lower semicontinuous convex functions
	on $X$ and $A=\partial f$ and $B=\partial g$, then
	$(\J{A}T^nx)_\nnn$ converges weakly to a minimizer of $f+g$ 
  when $\zer(\partial f+\partial g)\neq\fady$. This  
	explains the importance of the Douglas--Rachford algorithm in optimization.
	
	However, it is natural to inquire what the behaviour of the Douglas--Rachford algorithm is when $Z_0 = \fady$. In fact, this has been the focus of
	recent research on the Douglas--Rachford algorithm and
	the closely related alternating direction method of multipliers;
	see, e.g., \cite{BGSB}, \cite{130}, \cite{LRY}, and \cite{RLY}.
	A key quantity to study this general case is the
	\emph{minimal displacement vector}
	\begin{subequations}
 \label{e:devall}
		\begin{empheq}[box=\mybluebox]{equation}
			\label{e:defv}
			v\coloneqq \Pj{\cran  (\Id-T)}(0),
		\end{empheq}
		i.e., $v$ is the projection of $0$ onto the nonempty closed convex
		subset $\cran(\Id-T)$ of $X$. This vector encodes the minimal perturbation
		of the original problem that makes the sum problem possibly feasible.
		Some of the results we shall prove involve the vectors
		\begin{empheq}[box=\mybluebox]{equation}
			\label{e:def:vD:vR}
			v_D\coloneqq \Pj{\overline{\dom A-\dom B}}(0)
			\;\;\text{and}\;\;
			v_R\coloneqq \Pj{\overline{\ran A+\ran B}}
			(0)
		\end{empheq}
	\end{subequations}
	which are also well defined: indeed, the two sets
	\begin{empheq}[box=\mybluebox]{equation}
		\label{e:DandR}
		D\coloneqq \dom A-\dom B
		\;\;\text{and}\;\;
		R\coloneqq \ran A+\ran B
	\end{empheq}
	have \emph{closures that are convex} because the closures of
	the four sets
	$\dom A$, $\dom B$, $\ran A$, $\ran B$ are already \emph{convex}
	due to the maximal monotonicity of  $A$ and $B$
	(see \cite[Corollary~21.14]{BC}).

	The \emph{goal of this paper} is to substantially advance the
	understanding of the Douglas--Rachford algorithm applied to
	convex optimization problems. Fortunately, our main result can
	be stated elegantly after we introduce some necessary notation:
	Suppose that $f$ and $g$ belong to $\Gamma_0(X)$, i.e., that 
	\begin{equation}
		\text{$f$ and $g$ are convex, lower semicontinuous, and proper on $X$,}
	\end{equation}
	that $(A,B)=(\partial f,\partial g)$ and
	thus $\J{A}$ and $\J{B}$ turn into the \emph{proximal mappings}
	\begin{empheq}[box=\mybluebox]{equation}
		\prox_f \coloneqq (\Id+\partial f)^{-1}
		\;\;\text{and}\;\;
		\prox_g \coloneqq (\Id+\partial g)^{-1},
	\end{empheq}
	with corresponding \emph{reflected proximal mappings}
	$\R{f}\coloneqq 2\prox_f-\Id$ and $\R{g}\coloneqq 2\prox_g-\Id$, respectively.
	Under appropriate assumptions,
	our main result (see \cref{thm:conv} below) states that
	for every $x\in X$, there exists a vector $\overline{z}\in X$ such that
\begin{itemize}
\item \textbf{(minimizer of normal problem)} \;
$f(\overline{z})+g(\overline{z}-v)
=  \displaystyle\min_{y\in X} \big(f(y)+g(y-v)\big)$;
\item \textbf{(shadow convergence)}
\; $\prox_fT^n x\weakly  \overline{z}$ and 
$\prox_g\refl_fT^n x\weakly  \overline{z}-v$;\\[-3mm]
\item \textbf{(value convergence)}\;
$f(\prox_fT^n x)\to f(\overline{z})$ and 
$g(\prox_g\refl_fT^n x)\to g(\overline{z}-v)$. 
\end{itemize}
This beautifully captures the case of finding a minimizer of
	$f+g$ when $v=0$! \emph{Moreover, to the best of our knowledge,
		this is the first time where 
		weak convergence of the shadow sequence $(\prox_f T^nx)_\nnn$ is obtained
		along with value convergence in this generality.}  
	Along our journey to the proof of this result, we discover and present
	various substantial improvements of earlier results in this quite general case.
	
	In the remainder of this section, we provide a brief history
	of previous results and also an outline of the rest of the paper.
	
	\subsection{Brief history for the inconsistent case}
	
	The story begins with the 2003 paper \cite{Lukepaper} concerning 
	two nonempty closed convex subsets $U,V$ of $X$.
The authors of \cite{Lukepaper} proved that  when 
	$(f,g)=(\iota_U,\iota_V)$, then 
	the shadow sequence $(\Pj{U}T^n x)_\nnn$
	is bounded and its weak cluster points
	are minimizers of the function $\iota_U+\iota_V(\cdot-v)$.
	In 2013, and motivated by the results in \cite{Lukepaper},
	the authors of \cite{Sicon} established a powerful static framework to 
	cope with the inconsistent problem
	in the general setting of maximally monotone 
	operators. 
	Another major milestone was the 2014 paper \cite{MOR}
	where the connection between the range of the displacement 
	mapping associated with the Douglas--Rachford operator,
	namely $\Id-T$, and the domains and 
	ranges of the individual operators
	was established.

	Building on \cite{Lukepaper},
	\cite{Sicon}, and \cite{MOR},
  the present authors 
	provided useful convergence results 
	for the shadow sequence 
	of the Douglas--Rachford algorithm in various instances.
	Indeed, 
	the first proof of \emph{strong} convergence
	of the shadow sequence was given when 
	$(f,g)=(\iota_U,\iota_V)$ for two 
	closed affine subspaces $U,V$ of $X$
	in 2015 in \cite{101}.
	Even more strikingly, they obtained linear rates of convergence
	with the rate being quantified in terms of the 
	cosine of the Friedrichs angle between $U$ and $V$.
	In another 2015 paper \cite{BDM:16}, together with M.N.\ Dao,
	they extended
	the result to the setting when one set is a nonempty closed 
	convex (but not necessarily affine) subset of $X$.
	Another milestone is the work in the 
	2016 paper \cite{BM:MPA17} where
	the authors presented a new Fej\'er monotoncity principle
	to prove the full weak convergence of the shadow sequence
	in the case of two nonempty closed convex 
	(not necessarily intersecting) subsets of $X$. 
  This completed the analysis for two indicator functions 
  that began in \cite{Lukepaper} 13 years earlier.
	We refer the interested reader to \cite{Moursithesis} for a 
  detailed collection of the previously mentioned 
	results. 
	The latest breakthrough was the 2019 paper \cite{130} 
  which dealt with the case when 
	$f=\iota_U$, where $U$ 
	is a closed linear subspace of $X$,
	and $g\in\Gamma_0(X)$. 
(In passing, we point out that
		at the time of writing \cite{130} an assumption was made there 
    --- namely $v_R=0$ --- that was
    \emph{sufficient} for convergence.
		In the present paper, we clarify this further 
		by proving that $v_R=0$ 
		is also a \emph{necessary} condition for convergence --- see 
    \cref{prop:necessary:cond} below.)
	
	We now turn to related works that built on the previous results.
	In the 2017  paper \cite{LRY}
	(and the 2018 paper \cite{RLY}) Ryu, Lin, and Yin 
	proposed a method based on the Douglas--Rachford algorithm
	that identifies, in certain situations,  infeasible, unbounded, 
	and pathological conic (and feasible and infeasible convex, respectively) optimization problems.
	In the 2018 paper, they translated the analysis to
	ADMM, which is an incarnation of the Douglas--Rachford algorithm 
	(see, e.g., \cite{Gabay} or \cite{MZinch19}).
	The analysis hinges upon identifying 
	the range of $\Id-T $ and the notion and location 
	of the minimal displacement vector defined in \cite{MOR}.
	Closely related in spirit  to the results in \cite{LRY}
	and  \cite{RLY}
	is the 2018 paper by Banjac, Goulart, Stellato, and Boyd 
	\cite{BGSB}. Indeed, these authors showed that for certain
	classes of convex optimization problems, ADMM can detect
	primal and dual infeasibility of the problem and they 
	propose a termination criterion.
	In the recent 2020 papers \cite{BL21} and \cite{B21} 
	the authors
	extended some of the geometric properties of the 
	minimal displacement vector established in 
	\cite{130}. In particular, the decomposition 
	of $v$ into the sum of orthogonal vectors $v=v_D+v_R$
	(see \cref{e:def:vD:vR}) was seen to be useful in studying certain structured  optimization problems.
	Finally, our work has proven to be useful even in nonconvex settings; 
  indeed, Borwein, Lindstrom, Sims, Schneider, and Skerritt 
  have used the main result of 
	\cite{BM:MPA17} to extend some of the convex theory to the 
	non-convex case in their 2018 paper \cite{BLSSS18}.

	\subsection{Organization of the paper}
	
	The remainder of this paper is organized as follows.
	In \cref{sec:aux}, we collect various results on 
	the normal problem, the solution set, and the set of
	minimizers of the sum of two functions to make subsequent
	proofs easier to follow. 
  Useful properties of $v_D$ and $v_R$ 
  (which are defined in \cref{e:def:vD:vR}) 
  are revealed in \cref{sec:vDandvR}.
In \cref{sec:static}, 
we present results on the interplay 
between the vectors $v,v_D,v_R$, 
the Douglas--Rachford operator $T$, 
and the generalized solution set $Z$. 
We analyze the shadow sequence $(\J{A}T^nx)_\nnn$ with regards to 
Fej\'er monotonicity and conditions necessary for convergence in
\cref{sec:dynamic}. 
Finally, in \cref{sec:yay}, we prove the main result. 
	
	Any terminology and notation not explicitly defined here can be found in 
	\cite{BC}. 
	
	\section{Background material and auxiliary results} 
	
	\label{sec:aux}
	
	In this section, we recall and record various results
	concerning the normal problem, the extended solution set, and
	the set of minimizers of two functions.
	
	Recall first the well-known
	inverse resolvent identity (see, e.g., \cite[Proposition~23.20]{BC})
	\begin{equation}
		\label{e:iri}
		\J{A}+\J{A^{-1}}=\Id
	\end{equation}
	as well as the Minty parametrization (see  \cite{Minty}) 
	\begin{equation}
		\label{Min:par}
		\gra A = \menge{(\J{A} x,x-\J{A} x)}{x\in X}.
	\end{equation}
	The following resolvent identities will be useful later: 
	\begin{lemma}
		\label{lem:bad:walaa:ii}
		Let $(y,w)\in X\times X$. Then
		$\J{A} y=\J{-w+A}(-w+y)$
		and
		$\J{A^{-1}} y=w+\J{(-w+A)^{-1}}(-w+y)$.
	\end{lemma}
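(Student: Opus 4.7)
The plan is to verify both identities by unwinding the definition $\J{A}=(\Id+A)^{-1}$ in terms of the graph of $A$, and then to derive the second identity from the first via the inverse resolvent identity \cref{e:iri}.

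For the first identity, I would argue by showing that the defining inclusions coincide. Set $z\coloneqq \J{A} y$; by definition of the resolvent this is equivalent to $y\in z+Az$, i.e., $y-z\in Az$. Now set $z'\coloneqq \J{-w+A}(-w+y)$; by the same reasoning this is equivalent to $-w+y\in z'+(-w+A)z'=z'-w+Az'$, i.e., $y-z'\in Az'$. Since $A$ is maximally monotone, $\Id+A$ is bijective onto $X$, so the element satisfying $y-\cdot\in A(\cdot)$ is unique; consequently $z=z'$, which yields $\J{A} y=\J{-w+A}(-w+y)$.

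For the second identity, I would apply \cref{e:iri} twice. On the one hand, $\J{A^{-1}} y=y-\J{A} y$. On the other hand, applying \cref{e:iri} to the operator $-w+A$ at the point $-w+y$ gives $\J{(-w+A)^{-1}}(-w+y)=(-w+y)-\J{-w+A}(-w+y)$, and by the first identity this equals $(-w+y)-\J{A} y$. Adding $w$ to both sides produces $w+\J{(-w+A)^{-1}}(-w+y)=y-\J{A} y=\J{A^{-1}} y$, as claimed.

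There is no real obstacle here; the only subtle point is implicitly invoking maximal monotonicity of $-w+A$ to ensure $\J{-w+A}$ and $\J{(-w+A)^{-1}}$ are single-valued and everywhere defined on $X$, which is immediate since a translation of a maximally monotone operator remains maximally monotone. Both identities are essentially translation-invariance statements for the resolvent, and the argument can be delivered in a few lines.
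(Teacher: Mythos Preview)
Your proof is correct and follows essentially the same route as the paper: establish the translation identity for $\J{A}$ and then derive the second formula via the inverse resolvent identity \cref{e:iri}. The only cosmetic difference is that the paper quotes the known formula $\J{-w+A}=\J{A}(\cdot+w)$ from \cite[Proposition~23.17]{BC} and substitutes, whereas you verify that formula inline by comparing the defining inclusions; this makes your argument slightly more self-contained but is not a genuinely different method.
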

	\begin{proof}
		Indeed, recall that $\J{-w+A}=\J{A}(\cdot+w)$
		by, e.g., \cite[Proposition~23.17]{BC}.
		Now, $\J{A} y=\J{A}((-w+y)+w)=\J{-w+A}(-w+y)$
		and
		$\J{A^{-1}} y=y-\J{A}y= y-\J{-w+A}(-w+y)
		=w+(-w+y)-\J{-w+A}(-w+y)
		=w+\J{(-w+A)^{-1}}(-w+y)$.
	\end{proof}
	
	Turning now to 
	the Douglas--Rachford operator $T$ introduced in \cref{e:def:T}, 
	we note that 
	\begin{equation}
		\label{e:Id-T:desc}
		\Id-T = \J{A}-\J{B}\R{A}=\J{A^{-1}}+\J{B^{-1}}\R{A},
	\end{equation}
	and also the following:
	
	\begin{lemma}
		\label{lem:asymp:shadow}
		We have the following:
		\begin{enumerate}
    \setlength\itemsep{0.35em}
			\item
			\label{lem:asymp:shadow:i}
			$\J{A}-\J{A}T=\J{A^{-1}} T+\J{B^{-1}}\R{A}$;
			hence, $\ran(\J{A}-\J{A}T)\subseteq \ran A+\ran B$.
			\item
			\label{lem:asymp:shadow:ii}
			$\J{A^{-1}}-\J{A^{-1}}T=\J{A} T-\J{B}\R{A}$; 
			hence, $\ran(\J{A^{-1}}-\J{A^{-1}}T)\subseteq\dom A-\dom B$.
		\end{enumerate}
	\end{lemma}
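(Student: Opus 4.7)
The plan is to derive both identities by short algebraic manipulations that combine the two equivalent expressions for $\Id-T$ recorded in \cref{e:Id-T:desc} with the inverse resolvent identity \cref{e:iri}. The range inclusions will then be immediate from the standard characterizations $\ran\J{A}=\dom A$, $\ran\J{B}=\dom B$, $\ran\J{A^{-1}}=\ran A$, and $\ran\J{B^{-1}}=\ran B$; the last two of these are a direct consequence of the Minty parametrization \cref{Min:par}.

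\textbf{Execution.} For \cref{lem:asymp:shadow:i}, I would use $\J{A}+\J{A^{-1}}=\Id$ to rewrite
\begin{equation*}
\J{A}-\J{A}T \;=\; (\Id-T)-\J{A^{-1}}+\J{A^{-1}}T,
\end{equation*}
and then substitute the expression $\Id-T=\J{A^{-1}}+\J{B^{-1}}\R{A}$ from \cref{e:Id-T:desc}. The two occurrences of $\J{A^{-1}}$ cancel, leaving exactly $\J{A^{-1}}T+\J{B^{-1}}\R{A}$. For \cref{lem:asymp:shadow:ii}, the same bookkeeping in the opposite direction,
\begin{equation*}
\J{A^{-1}}-\J{A^{-1}}T \;=\; (\Id-T)-\J{A}+\J{A}T,
\end{equation*}
combined with the other expression $\Id-T=\J{A}-\J{B}\R{A}$ from \cref{e:Id-T:desc}, makes the two $\J{A}$ terms cancel and yields $\J{A}T-\J{B}\R{A}$.

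\textbf{Range inclusions and main obstacle.} With the identities in hand, each summand on the right-hand side of \cref{lem:asymp:shadow:i} takes values in the appropriate set: for every $x\in X$, $\J{A^{-1}}(Tx)\in\ran\J{A^{-1}}=\ran A$ and $\J{B^{-1}}(\R{A}x)\in\ran\J{B^{-1}}=\ran B$, so the pointwise sum lies in $\ran A+\ran B$. The argument for \cref{lem:asymp:shadow:ii} is analogous, using $\ran\J{A}=\dom A$ and $\ran\J{B}=\dom B$ to place the difference in $\dom A-\dom B$. I do not anticipate any real obstacle here: once \cref{e:Id-T:desc} and \cref{e:iri} are invoked, the proof is purely a matter of careful algebra, and the only pitfall is keeping track of which of the two dual expressions for $\Id-T$ to substitute at each stage.
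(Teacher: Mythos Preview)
Your proof is correct and follows essentially the same approach as the paper: both arguments are short algebraic rearrangements that combine the inverse resolvent identity \cref{e:iri} with the two expressions for $\Id-T$ (equivalently, for $T$) in \cref{e:Id-T:desc}. The paper organizes the bookkeeping slightly differently---it verifies $\J{A}-\J{B^{-1}}\R{A}=T=\J{A}T+\J{A^{-1}}T$ and then rearranges---but the ingredients and level of difficulty are identical, and your derivation of the range inclusions via $\ran\J{A}=\dom A$, $\ran\J{A^{-1}}=\ran A$, etc., is exactly what is intended.
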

	\begin{proof}
		\cref{lem:asymp:shadow:i}:
		Indeed, $\J{A}-\J{B^{-1}}\R{A}
		=\J{A}-\R{A}+\J{B}\R{A}
		=\J{A}-2\J{A}+\Id+\J{B}\R{A}
		=\Id-\J{A}+\J{B}\R{A}
		= T=\J{A}T+\J{A^{-1}} T$.
		Rearranging yields the desired result.
		\cref{lem:asymp:shadow:ii}:
		Indeed,
		$\J{A^{-1}}+\J{B}\R{A}
		=\Id-\J{A}+\J{B}\R{A}
		=T
		=\J{A^{-1}}T+\J{A} T$.
		Rearranging yields the desired result.
	\end{proof}

	Recalling the definitions of the  sets $D$ and $R$ from \cref{e:DandR},
	we have (see \cite[Proposition~4.1]{EckThesis} or \cite[Corollary~2.14]{Sicon})
	\begin{equation}
		\ran(\Id-T) = \menge{a-b}{(a,a^*)\in\gr A,\, (b,b^*)\in \gr B,\, a-b=a^*+b^*}
		\subseteq D \cap R.
	\end{equation}
	It follows that 
	$\cran(\Id-T)\subseteq \overline{D\cap R}\subseteq 
	\overline{D}\cap\overline{R}$.
	From this point onwards, we will assume that 
	\begin{empheq}[box=\mybluebox]{equation}
		\label{e:assump:free:fd}
		\cran(\Id-T)=\overline{D\cap R}
		=\overline{D}\cap \overline{R}.
	\end{empheq}
	For applications, this assumption is rather mild as can been seen
	in the following:

	\begin{remark}
		\label{rem:assump:free:fd}
		It follows from \cite[Corollary~6.5]{MOR}
		that \cref{e:assump:free:fd}
		holds if $X$ is finite-dimensional
		and $A$ and $B$ are subdifferential operators
		of proper  lower semicontinuous  convex functions. 
		See also \cite[Theorem~5.2]{MOR} for  more general settings.
	\end{remark}

	Following \cite{Sicon},
	recall that \emph{the normal problem} associated with the ordered pair
	$(A,B)$ is
	\begin{equation}
		\label{P}
		\text{ find $x\in X$ such that  $0\in -v+Ax+B(x-v)$,}
	\end{equation}
	where $v$ is as in \cref{e:defv}.
	Next, 
	the \emph{Attouch--Th\'{e}ra dual pair} (see \cite{AT}
	and \textcolor{black}{\cite[page~40]{Mercier}}) of
	the
	\emph{primal} pair
	$(-v+A,B(\cdot-v))$
	is $(-v+A,B(\cdot-v))^*:=((-v+A)^{-1},(B(\cdot-v))^{-\ovee})$,
	where
	$(B(\cdot-v))^{\ovee}\coloneqq  (-\Id)\circ B(\cdot-v)\circ(-\Id)$ and 
	$B^{-\ovee}\coloneqq ((B(\cdot-v))^{-1})^\ovee=((B(\cdot-v))^\ovee)^{-1}$.
	We will make use of the notation 
	\begin{empheq}[box=\mybluebox]{equation}
		\label{e:def:Zv}
		Z\coloneqq Z_{(-v+A,B(\cdot-v))}=(-v+A+B(\cdot-v))^{-1}(0)
	\end{empheq}
	and
	\begin{empheq}[box=\mybluebox]{equation}
		\label{e:def:Kv}
		K\coloneqq K_{(-v+A,B(\cdot-v))}=((-v+A)^{-1}+(B(\cdot-v))^{-\ovee})^{-1}(0)
	\end{empheq}
	to denote the \emph{primal} and \emph{dual} solutions
	of the normal problem \cref{P}, respectively
	(see, e.g., \cite{74}).
	It follows from \cite[Proposition~3.2]{Sicon}
	that
	\begin{equation}
		\label{e:T:normal}
		T_{-v+A, B(\cdot-v)}=T(\cdot+v);
	\end{equation}
	moreover, \cite[Proposition~2.24 and Proposition~3.3]{Sicon}
	imply
	\begin{equation}
		\label{e:Z:Fix}
		Z\neq \fady\siff \fix T(\cdot+v)\neq \fady \siff v\in \ran(\Id-T).
	\end{equation}
	We now recall that the 
	\emph{extended solution set} 
	associated with  the normal problem
	\cref{P}
	(see Eckstein and Svaiter's \cite[Section~2.1]{EckSvai08}
	and also \cite[Section~3]{74}) is defined by
	\begin{equation}
		\label{def:ex:sol}
		\SE\coloneqq \SE_{(-v+A,B(\cdot-v))} :=\stb{(z,k)\in
			X\times X~|~-k\in B(z-v), k\in -v+Az}\subseteq Z\times K.
	\end{equation}
	The usefulness of $\SE$ becomes apparent in the next two results:
	
	\begin{fact}
		\label{fact:para:cc}
		Recalling \cref{Min:par} and \cref{e:T:normal},
		we have 
		\begin{equation}
			\label{fact:para:cc:ii}
			\SE = \menge{(\J{-v+A}\times \J{(-v+A)^{-1}}) (y,y)}{y\in \fix T(\cdot+v)}.
		\end{equation}
		If $A$ and $B$ are paramonotone\footnote{Let $C\colon X\rras X$ be monotone.
			Then
			$C$ is \emph{paramonotone}
			if
			[$
			\{(x,u),(y,v)\}\subseteq \gra C$
			and $\innp{x-y,u-v}=0$
			$]
			\RA
			\big\{(x,v),(y,u)\big\}\subseteq \gra C$.
			(For a more detailed discussion and examples of
			paramonotone operators, we refer the reader to \cite{Iusem98}.)},
		then we additionally have:
		\begin{enumerate}
    \setlength\itemsep{0.35em}
			\setcounter{enumi}{0}
			\item
			\label{fact:para:ii:b}
			$\SE=Z\times K$.
			\item
			\label{fact:para:ii:a}
			$\fix T(\cdot+v)=Z+K$.
			
		\end{enumerate}
	\end{fact}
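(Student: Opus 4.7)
The plan is to prove part (i) directly from the definitions by combining Minty's parametrization with the reformulation \cref{e:T:normal}, and then to deduce parts (a) and (b) from part (i) using the standard consequence of paramonotonicity.

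Write $A' \coloneqq -v+A$ and $B' \coloneqq B(\cdot-v)$, so that by \cref{def:ex:sol}, $(z,k)\in\SE$ iff $k\in A'z$ and $-k\in B'z$, and by \cref{e:T:normal} we have $\fix T(\cdot+v)=\fix T_{A',B'}$. For the forward inclusion of the parametrization, I would pick $y\in\fix T_{A',B'}$, set $z\coloneqq \J{A'}y$ and $k\coloneqq \J{(A')^{-1}}y$, so that $k=y-z$ by the inverse resolvent identity \cref{e:iri}. Minty \cref{Min:par} then gives $k\in A'z$. The fixed-point equation $y-\J{A'}y+\J{B'}\R{A'}y=y$ simplifies to $\J{A'}y=\J{B'}\R{A'}y$; expanding $\R{A'}y=2z-y=z-k$ this reads $z=\J{B'}(z-k)$, i.e.\ $z-k\in z+B'z$, which yields $-k\in B'z$. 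Hence $(z,k)\in\SE$. For the reverse inclusion, given $(z,k)\in\SE$, I would set $y\coloneqq z+k$. Then $z+A'z\ni z+k=y$ gives $z=\J{A'}y$, and hence $k=\J{(A')^{-1}}y$ by \cref{e:iri}. Finally, $-k\in B'z$ yields $\J{B'}(z-k)=z=\J{A'}y$, so $T_{A',B'}y=y-\J{A'}y+\J{B'}\R{A'}y=y-z+z=y$. This establishes the bijective parametrization \cref{fact:para:cc:ii}.

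For part (a), I would invoke the standard fact (see, e.g., \cite{Iusem98} and the discussion of the extended solution set in \cite{74}) that paramonotonicity of $A$ and $B$ is preserved under translation and adding constants, and that for paramonotone primal/dual pairs the extended solution set factorises as $\SE=Z\times K$. The quick argument is: if $(z_1,k_1),(z_2,k_2)\in\SE$, then monotonicity of $A'$ and $B'$ forces $\scal{z_1-z_2}{k_1-k_2}=0$ on both sides, so paramonotonicity allows the ``swap'' $(z_1,k_2),(z_2,k_1)\in\SE$, which is exactly the statement $\SE=Z\times K$. Part (b) then follows immediately by combining part (i) with part (a): from the parametrization, $\fix T(\cdot+v)$ is the image of $\SE$ under the map $(z,k)\mapsto z+k$ (this map is an inverse to $y\mapsto(\J{A'}y,\J{(A')^{-1}}y)$ by \cref{e:iri}), so $\fix T(\cdot+v)=\{z+k:(z,k)\in Z\times K\}=Z+K$.

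The main obstacle is really just the bookkeeping in part (i): keeping the translated operators $A'$ and $B'$ straight and verifying that the Minty parametrization of $\gra A'$ and the fixed-point equation for $T_{A',B'}$ encode exactly the two inclusions defining $\SE$. Once that is pinned down, parts (a) and (b) are essentially citations of known results about paramonotone pairs. No genuinely new computation is needed beyond the inverse resolvent identity \cref{e:iri} and Minty's formula \cref{Min:par}.
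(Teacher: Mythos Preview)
Your argument is correct. The paper itself does not prove this statement: it is stated as a \emph{Fact}, and the paper's ``proof'' consists entirely of citations --- \cref{fact:para:cc:ii} is attributed to \cite[Theorem~4.5]{74}, and items \ref{fact:para:ii:b} and \ref{fact:para:ii:a} to \cite[Corollary~5.5(ii)\&(iii)]{74}. What you have written is essentially a self-contained reconstruction of those cited results, specialised to the translated pair $(A',B')=(-v+A,\,B(\cdot-v))$.

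Your direct computation for \cref{fact:para:cc:ii} via Minty and the fixed-point equation is exactly the standard derivation, and your paramonotonicity ``swap'' argument for \ref{fact:para:ii:b} is the expected one (you implicitly use that for any $z\in Z$ and $k\in K$ there exist $k'$ and $z'$ with $(z,k'),(z',k)\in\SE$, then swap). One small point worth making explicit in \ref{fact:para:ii:a}: the map $y\mapsto(\J{A'}y,\J{(A')^{-1}}y)$ is a \emph{bijection} from $\fix T(\cdot+v)$ onto $\SE$ (not merely a surjection), since $\J{A'}+\J{(A')^{-1}}=\Id$ forces $y=z+k$; you say this, but it is the crux of why $\fix T(\cdot+v)=Z+K$ rather than merely $\supseteq$.
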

	\begin{proof}
		The identity \cref{fact:para:cc:ii} is \cite[Theorem~4.5]{74}.
		\ref{fact:para:ii:b}\&\ref{fact:para:ii:a}:
		See \cite[Corollary~5.5(ii)\&(iii)]{74}.
	\end{proof}

	\begin{lemma}
		\label{lem:bad:walaa}
		The following hold:
		\begin{enumerate}
    \setlength\itemsep{0.35em}
			\item
			\label{lem:bad:walaa:i}
			$\fix T(\cdot+v)=-v+\fix( v+T)$.
			\item
			\label{lem:bad:walaa:iii}
			$\SE=(0,-v)+\menge{(\J{A}\times \J{A^{-1}})(f,f)}{f\in \fix (v+T)}.
			$
		\end{enumerate}
	\end{lemma}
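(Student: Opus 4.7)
Part \ref{lem:bad:walaa:i} is a direct rewriting: for $y \in X$ we have $y \in \fix T(\cdot+v)$ iff $T(y+v)=y$ iff $v+T(y+v)=y+v$ iff $y+v \in \fix(v+T)$, which is exactly $y \in -v+\fix(v+T)$. No hypothesis beyond the definitions is needed.

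Part \ref{lem:bad:walaa:iii} will be obtained by starting from the parametrization \cref{fact:para:cc:ii} in \cref{fact:para:cc},
\begin{equation*}
\SE=\menge{(\J{-v+A}\times \J{(-v+A)^{-1}})(y,y)}{y\in \fix T(\cdot+v)},
\end{equation*}
and rewriting the two resolvents attached to the shifted operator $-v+A$ in terms of those of $A$. This is exactly what \cref{lem:bad:walaa:ii} supplies with $w=v$: the first identity yields $\J{-v+A}(y)=\J{A}(y+v)$, and the second yields $\J{(-v+A)^{-1}}(y)=\J{A^{-1}}(y+v)-v$.

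I then reparametrize the index set. By \cref{lem:bad:walaa:i}, $y$ ranges over $\fix T(\cdot+v)$ if and only if $f\coloneqq y+v$ ranges over $\fix(v+T)$. Substituting $y=f-v$ in the two rewritten resolvent formulas gives $\J{-v+A}(f-v)=\J{A}(f)$ and $\J{(-v+A)^{-1}}(f-v)=\J{A^{-1}}(f)-v$. Therefore
\begin{equation*}
\SE=\menge{\big(\J{A}(f),\,\J{A^{-1}}(f)-v\big)}{f\in\fix(v+T)}=(0,-v)+\menge{(\J{A}\times\J{A^{-1}})(f,f)}{f\in\fix(v+T)},
\end{equation*}
which is the asserted identity.

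I do not anticipate a real obstacle: the content of the lemma is essentially a translation of the known parametrization of $\SE$ from the normal pair $(-v+A,B(\cdot-v))$ back to the original pair $(A,B)$, and all the translation machinery is already packaged in \cref{lem:bad:walaa:ii} and in part \ref{lem:bad:walaa:i}. The only point that requires a little care is to apply \cref{lem:bad:walaa:ii} with the correct sign convention (namely $w=v$, so that $-w+A=-v+A$) and to remember the extra summand $-v$ produced by the second resolvent identity, which is exactly what accounts for the translate $(0,-v)$ in the final formula.
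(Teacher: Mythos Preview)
Your proof is correct and follows essentially the same approach as the paper: both arguments combine part~\ref{lem:bad:walaa:i} with the resolvent translation identities of \cref{lem:bad:walaa:ii} (applied with $w=v$) to rewrite the parametrization \cref{fact:para:cc:ii} in terms of $\J{A}$ and $\J{A^{-1}}$. The only cosmetic difference is that the paper phrases the computation as showing $\menge{(\J{A}\times \J{A^{-1}})(f,f)}{f\in \fix (v+T)}=(0,v)+\SE$ and then rearranges, whereas you work in the other direction; the content is identical.
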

	\begin{proof}
		\cref{lem:bad:walaa:i}:
		Let $f\in X$.
		Then $f\in \fix (v+T)
		\siff
		f=v+Tf
		\siff
		f-v=T(f-v+v)
		\siff
		f-v\in \fix T(\cdot+v)
		\siff
		f\in v+\fix T(\cdot+v)
		$.
		
		\cref{lem:bad:walaa:iii}:
		Combine \cref{lem:bad:walaa:i}
		and \cref{lem:bad:walaa:ii}
		to learn that
		$\tmenge{(\J{A}\times \J{A^{-1}})(f,f)}{f\in \fix (v+T)}
		=(0,v)+\tmenge{(\J{-v+A}\times \J{(-v+A)^{-1}})(f,f)}{f\in \fix T(\cdot+v)}
		$.
		Now invoke \cref{fact:para:cc:ii}.
	\end{proof}
	
	We conclude this section with the following useful results
	concerning the minimizers of
	the sum of two functions.
	
	\begin{lemma}
		\label{lem:subgd:fg}
		Let $f\in \Gamma_0(X)$
		and let $g\in \Gamma_0(X)$ be such that
		$\argmin (f+g)\neq \fady$. Let $x\in X$
		and let $y\in \argmin (f+g)$.
		Suppose that  $x^*\in (-\partial f(x))\cap \partial g(x)$.
		Then
		\begin{subequations}
			\label{e:subgrad:scal}
			\begin{align}
				\label{e:subgrad:scal:f}
				f(y) & =f(x)+\scal{-x^*}{y-x},
				\\
				g(y) & =g(x)+\scal{x^*}{y-x},
				\label{e:subgrad:scal:g}
			\end{align}
		\end{subequations}
		and
		\begin{equation}
			\label{lem:subgd:fg:i}
			x^*\in(-\partial f(y))\cap \partial g(y).
		\end{equation}
		If $\iota_C\in \{f,g\}$,
		where $C$ is nonempty closed  convex subset of $X$, 
		then we also have the following:
		\begin{enumerate}
    \setlength\itemsep{0.35em}
			\setcounter{enumi}{0}
			\item
			\label{lem:subgd:fg:ii}
			$\scal{x^*}{y-x}=0$.
			\item
			\label{lem:subgd:fg:iii}
			$K\perp (Z-Z)$.
			\item
			\label{lem:subgd:fg:iv}
			$\J{A}\Pj{\fix T}=\Pj{Z}$.
		\end{enumerate}
	\end{lemma}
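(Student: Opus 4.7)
My plan is to establish the two scalar identities \cref{e:subgrad:scal:f}--\cref{e:subgrad:scal:g} and the subgradient inclusion \cref{lem:subgd:fg:i} first, then handle (i)--(iii) in order. The starting point is that $x^* \in -\partial f(x)$ and $x^* \in \partial g(x)$ yield the two subgradient inequalities
\[
  f(y) \geq f(x) + \scal{-x^*}{y - x} \quad\text{and}\quad g(y) \geq g(x) + \scal{x^*}{y - x}.
\]
Adding them produces $f(y) + g(y) \geq f(x) + g(x)$, while $y \in \argmin(f+g)$ gives the reverse inequality, so both subgradient inequalities collapse to equalities, which are precisely \cref{e:subgrad:scal:f} and \cref{e:subgrad:scal:g}. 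For \cref{lem:subgd:fg:i}, I insert the equality back into the subgradient definition: for every $w \in X$, $f(w) \geq f(x) + \scal{-x^*}{w - x} = f(y) + \scal{-x^*}{w - y}$, so $-x^* \in \partial f(y)$, and the claim $x^* \in \partial g(y)$ is symmetric.

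For (i), I assume without loss of generality that $f = \iota_C$. Then $x \in \dom \partial f \subseteq C$ (since $-x^* \in \partial f(x)$) and $y \in \argmin(f + g) \subseteq \dom f = C$, so $f(x) = f(y) = 0$; consequently \cref{e:subgrad:scal:f} collapses to $\scal{x^*}{y - x} = 0$. The case $g = \iota_C$ is identical using \cref{e:subgrad:scal:g}.

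For (ii), the hypothesis $\argmin(f + g) \neq \fady$ together with the subdifferential sum rule forces $\zer(A + B) \neq \fady$, hence $0 \in \ran(\Id - T)$ and $v = 0$. Consequently $Z = \zer(A + B) = \argmin(f + g)$, and since subdifferentials are paramonotone, \cref{fact:para:ii:b} gives $\SE = Z \times K$. For $z_1, z_2 \in Z$ and $k \in K$, the membership $(z_1, k) \in \SE$ means $k \in \partial f(z_1)$ and $-k \in \partial g(z_1)$; applying (i) with the triple $(z_1, z_2, -k)$ playing the role of $(x, y, x^*)$ yields $\scal{-k}{z_2 - z_1} = 0$.

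Finally, for (iii), \cref{fact:para:ii:a} (with $v = 0$) gives $\fix T = Z + K$. Fix $x \in X$, set $p := \Pj{\fix T}(x)$, and decompose $p = z + k$ with $z \in Z$, $k \in K$. Because $\SE = Z \times K$ forces $k \in Az$, one obtains $\J{A} p = z$, so the task reduces to showing $z = \Pj{Z}(x)$. For any $z' \in Z$, the point $z' + k$ still lies in $\fix T$, so the projection inequality $\scal{x - p}{(z' + k) - p} \leq 0$ simplifies to $\scal{x - z - k}{z' - z} \leq 0$; combining this with $\scal{k}{z' - z} = 0$ from (ii) yields $\scal{x - z}{z' - z} \leq 0$, i.e., $z = \Pj{Z}(x)$. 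The most delicate moment in the whole argument is this last step: the orthogonality established in (ii) is used precisely to peel the $K$-component off the projection onto $\fix T$ and isolate the $Z$-component, so that the decomposition $\fix T = Z + K$ transfers cleanly to projections.
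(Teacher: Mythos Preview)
Your argument for \cref{e:subgrad:scal:f}--\cref{e:subgrad:scal:g}, \cref{lem:subgd:fg:i}, and item~(i) is correct and is essentially the paper's proof: add the two subgradient inequalities, compare with the minimality of $y$, and collapse to equalities.

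For items~(ii) and~(iii) you take a genuinely different route. The paper proves $K\perp(Z-Z)$ by invoking the characterization $K=(-\partial f(x))\cap\partial g(x)$ from \cite[Remark~5.4]{74} and then applying item~(i); for $\J{A}\Pj{\fix T}=\Pj{Z}$ it simply cites \cite[Theorem~6.7(ii) and Corollary~5.5(iii)]{74}. You instead use $\SE=Z\times K$ and $\fix T=Z+K$ (paramonotonicity, \cref{fact:para:cc}) and then give a direct, self-contained projection argument: decompose $p=\Pj{\fix T}x=z+k$, verify $\J{A}p=z$ via $k\in Az$, and exploit the orthogonality from~(ii) to strip the $K$-component off the variational inequality for $\Pj{\fix T}$ and recover that for $\Pj{Z}$. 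This is a nice elementary replacement for the external citation and makes the role of~(ii) in~(iii) fully transparent.

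One imprecision to fix: in your proof of~(ii) you write that ``$\argmin(f+g)\neq\fady$ together with the subdifferential sum rule forces $\zer(A+B)\neq\fady$''. The sum rule need not hold here and is not needed: the hypothesis $x^*\in(-\partial f(x))\cap\partial g(x)$ already gives $0\in\partial f(x)+\partial g(x)$, hence $x\in\zer(A+B)$ directly, so $v=0$ and $Z=\zer(A+B)$. Also, you only need the trivial inclusion $Z=\zer(\partial f+\partial g)\subseteq\zer\partial(f+g)=\argmin(f+g)$ to apply item~(i) with $y=z_2$; the reverse inclusion (which is \cref{prop:criticals:mini}) is not required.
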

	\begin{proof}
		Observe that
		$0=-x^*+x^*\in \partial f(x) +\partial g(x)\subseteq
		\partial(f+g)(x)$.
		Hence $x$ is a minimizer of $f+g$.
		Consequently $f(x)+g(x)=f(y)+g(y)$; equivalently,
		\begin{equation}
			\label{e:squeeze}
			f(x)-f(y)=g(y)-g(x).
		\end{equation}
		The subgradient inequalities for
		$f$ and $g$ yield   $(\forall z\in X)$
		\begin{subequations}
			\label{e:subgrad}
			\begin{align}
				f(z) & \ge f(x)+\scal{-x^*}{z-x}, 
				\\
				g(z) & \ge g(x)+\scal{x^*}{z-x}.
			\end{align}
		\end{subequations}
		In particular, we learn that
		\begin{subequations}
			\label{e:subgd:fg}
			\begin{align}
				f(y) & \ge f(x)-\scal{x^*}{y-x}, 
				\\
				g(y) & \ge g(x)+\scal{x^*}{y-x}.
			\end{align}
		\end{subequations}
		Hence 
		\begin{equation}
			f(x)-f(y)\le\scal{x^*}{y-x}\le g(y)-g(x).
		\end{equation}
		Combining the above inequality with
		\cref{e:squeeze} yields
		\begin{equation}
			\label{e:subgrad:ortho}
			f(x)-f(y)=\scal{x^*}{y-x}= g(y)-g(x).
		\end{equation}
		\cref{lem:subgd:fg:i}:
		Let $z\in X$.
		Then \cref{e:subgrad}, \cref{e:subgrad:ortho}, 
		and \cref{e:subgrad:scal}
		yield   $(\forall z\in X)$
		\begin{subequations}
			\begin{align}
				f(z) & \ge f(x)+\scal{-x^*}{z-x}
				=\underbrace{f(x)+\scal{-x^*}{y-x}}_{=f(y)}
				+\scal{-x^*}{z-y}=f(y)+\scal{-x^*}{z-y},
				\\
				g(z) & \ge g(x)+\scal{x^*}{z-x}
				=\underbrace{g(x)+\scal{x^*}{y-x}}_{=g(y)}
				+\scal{x^*}{z-y}=g(y)+\scal{x^*}{z-y}.
			\end{align}
		\end{subequations}
		Consequently we learn that
		$x^{*}\in -\partial f(y)$
		and
		$x^{*}\in \partial g(y)$.
		\cref{lem:subgd:fg:ii}:
		Suppose first that $g=\iota_C$.
		Because $x$ and $y$ are minimizers of $f+\iota_C$
		we must have $\{x,y\}\subseteq C$, hence
		$\iota_C(x)=\iota_C(y)=g(x)=g(y)=0$.
		Now combine with \cref{e:subgrad:ortho}.
		\cref{lem:subgd:fg:iii}:
		It follows from \cite[Remark~5.4]{74}
		that
		$K=(-\partial f(x))\cap \partial g(x)$.
		Now combine with \cref{lem:subgd:fg:ii}.
		The proof when $f=\iota_C$ is similar.
		\cref{lem:subgd:fg:iv}:
		Combine \cref{lem:subgd:fg:iii}
		and
		\cite[Theorem~6.7(ii)~and~Corollary~5.5(iii)]{74}.
	\end{proof}

	\begin{proposition}
		\label{prop:criticals:mini}
		Let $f\in \Gamma_0(X)$
		and let $g\in \Gamma_0(X)$ be such that
		$\zer(\partial f+\partial g)\neq \fady$.
		Then
		\begin{equation}
			\argmin (f+g)=  \zer \partial(f+g)=\zer(\partial f+\partial g).
		\end{equation}
	\end{proposition}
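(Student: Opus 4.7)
The plan is to decompose the equality into the chain
$\zer(\partial f+\partial g)\subseteq \zer\partial(f+g)\subseteq \argmin(f+g)\subseteq \zer(\partial f+\partial g)$
and establish each inclusion separately; the nonemptiness hypothesis $\zer(\partial f+\partial g)\neq\fady$ will be used only for the last step.

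First I would dispatch the two ``easy'' inclusions. The sum-rule containment $\partial f+\partial g\subseteq \partial(f+g)$ is immediate from the subgradient inequalities, which yields $\zer(\partial f+\partial g)\subseteq \zer\partial(f+g)$. Next, if $0\in\partial(f+g)(x)$, then the definition of the subdifferential directly gives $(f+g)(y)\geq (f+g)(x)$ for every $y\in X$, so $x\in\argmin(f+g)$. These two steps require no hypothesis beyond $f,g\in \Gamma_0(X)$.

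The third inclusion $\argmin(f+g)\subseteq \zer(\partial f+\partial g)$ is the substantive part, and here I would lean on \cref{lem:subgd:fg}. Fix $y\in \argmin(f+g)$. By hypothesis there exists $x\in \zer(\partial f+\partial g)$, which provides some $x^*\in(-\partial f(x))\cap\partial g(x)$. Since the first two inclusions have already been shown, $x\in\argmin(f+g)$, so in particular $\argmin(f+g)\neq\fady$ and all the hypotheses of \cref{lem:subgd:fg} are met with this $x$, $x^*$, and $y$. Its conclusion \cref{lem:subgd:fg:i} transfers the subgradient from $x$ to $y$, namely $x^*\in(-\partial f(y))\cap\partial g(y)$. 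Therefore $0=-x^*+x^*\in \partial f(y)+\partial g(y)$, i.e., $y\in\zer(\partial f+\partial g)$, as required.

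The main conceptual obstacle here is the usual possible failure of the full sum rule $\partial(f+g)=\partial f+\partial g$ in the absence of a constraint qualification; the inclusion $\argmin(f+g)\subseteq\zer(\partial f+\partial g)$ is not automatic. The nonemptiness assumption sidesteps this: it supplies a single witness $(x,x^*)$ of the sum rule at one minimizer, and \cref{lem:subgd:fg} then propagates that witness to every element of $\argmin(f+g)$ without invoking any explicit qualification condition.
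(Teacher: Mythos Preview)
Your proposal is correct and follows essentially the same route as the paper's proof: the paper also records the easy inclusions $\zer(\partial f+\partial g)\subseteq \zer\partial(f+g)=\argmin(f+g)$ and then, for an arbitrary $y\in\argmin(f+g)$, picks $x\in\zer(\partial f+\partial g)$ with $x^*\in(-\partial f(x))\cap\partial g(x)$ and invokes \cref{lem:subgd:fg}\cref{lem:subgd:fg:i} to transfer $x^*$ to $y$. The only cosmetic difference is that the paper cites \cite{BC} for the first two inclusions rather than spelling them out.
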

	\begin{proof}
		Observe that
		$\zer(\partial f+\partial g)\subseteq \zer \partial(f+g)=\argmin (f+g)$
		by, e.g., \cite[Theorem~16.3~\&~Proposition~16.6(ii)]{BC}.
		It remains to establish the inclusion
		$\zer \partial(f+g)\subseteq\zer(\partial f+\partial g) $.
		To this end, let $y\in \zer \partial(f+g)=\argmin (f+g)$
		and let $x\in \zer(\partial f+\partial g)$.
		Then $(\exists x^*\in (-\partial f(x))\cap \partial g(x))$.
		Using \cref{lem:subgd:fg:i}, we learn that
		$x^*\in (-\partial f(y))\cap \partial g(y)$, hence
		$0\in \partial f(y)+\partial g(y)$.
		Consequently, $y\in \zer(\partial f+\partial g)$.
	\end{proof}
	
	\section{$v_D$ and $v_R$}
	
	\label{sec:vDandvR}
	
In this section, we shall derive various results on 
the vectors $v_D$ and $v_R$ (see \cref{e:def:vD:vR}).
Our analysis depends on the following two results.

	\begin{fact}
		\label{f:geo2sets}
		Let $U$ and $V$ be nonempty closed convex subsets of $X$.
		Then
		\begin{equation}
			\Pj{\overline{U-V}}(0)\in\overline{(\Pj{U}-\Id)(V)}\cap\overline{(\Id-\Pj{V})(U)}
			\subseteq
			(-\rec U)^\ominus \cap(\rec V)^\ominus.
		\end{equation}
	\end{fact}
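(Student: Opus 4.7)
The plan is to prove the two containments separately, setting $w \coloneqq \Pj{\overline{U-V}}(0)$, the minimum-norm element of $\overline{U-V}$.

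For the first containment, I would invoke density of $U-V$ in its closure to produce sequences $(u_n)$ in $U$ and $(v_n)$ in $V$ with $u_n-v_n\to w$. The vector $x_n \coloneqq \Pj{U}v_n - v_n$ lies in $(\Pj{U}-\Id)(V)\subseteq U-V$, and the nearest-point property of $\Pj{U}v_n$ gives $\|x_n\|\le\|u_n-v_n\|$, so $\limsup\|x_n\|\le\|w\|$. Since $x_n\in\overline{U-V}$ and $w$ is the minimum-norm element of that set, one also has $\|x_n\|\ge\|w\|$, hence $\|x_n\|\to\|w\|$. Now apply the projection characterization $\langle w,y\rangle\ge\|w\|^2$ valid for every $y\in\overline{U-V}$: taking $y=x_n$ and expanding yields $\|x_n-w\|^2\le\|x_n\|^2-\|w\|^2\to 0$, so $x_n\to w$ in norm. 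A symmetric argument applied to $y_n \coloneqq u_n-\Pj{V}u_n$ shows $y_n\to w$, establishing that $w\in\overline{(\Pj{U}-\Id)(V)}\cap\overline{(\Id-\Pj{V})(U)}$.

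For the second containment, it suffices to prove the stronger statements $\overline{(\Pj{U}-\Id)(V)}\subseteq(-\rec U)^\ominus$ and $\overline{(\Id-\Pj{V})(U)}\subseteq(\rec V)^\ominus$. Fix $v\in V$ and set $u=\Pj{U}v$. For $c\in\rec U$ one has $u+c\in U$, so the projection inequality $\langle v-u,u'-u\rangle\le 0$ specialized to $u'=u+c$ gives $\langle u-v,c\rangle\ge 0$, i.e., $u-v\in(-\rec U)^\ominus$; closedness of the polar cone then upgrades this to $\overline{(\Pj{U}-\Id)(V)}\subseteq(-\rec U)^\ominus$. The symmetric argument with the roles of $U$ and $V$ swapped produces $\langle u-\Pj{V}u,c\rangle\le 0$ for every $c\in\rec V$, giving the second inclusion. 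Intersecting completes the proof.

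The step I expect to be the most delicate is the strong convergence $x_n\to w$ in the first containment; its inputs --- the shortening bound $\|\Pj{U}v_n-v_n\|\le\|u_n-v_n\|$, the lower bound $\|x_n\|\ge\|w\|$ coming from $x_n\in\overline{U-V}$, and the Hilbert-space identity $\|x_n-w\|^2=\|x_n\|^2-2\langle x_n,w\rangle+\|w\|^2$ together with $\langle x_n,w\rangle\ge\|w\|^2$ --- must be assembled carefully. Once this is in hand, the polar-cone inclusions are routine consequences of the obtuse-angle characterization of the projection and the translation-invariance of $U$ and $V$ along their recession directions.
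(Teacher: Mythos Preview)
Your proof is correct. The paper, however, does not give a direct argument: the statement is labelled a \emph{Fact} and the proof consists of two citations, \cite[Corollary~2.7]{Lukepaper} for the membership $\Pj{\overline{U-V}}(0)\in\overline{(\Pj{U}-\Id)(V)}\cap\overline{(\Id-\Pj{V})(U)}$ and \cite[Theorem~3.1]{Zara} for the polar-cone inclusion. Your first-containment argument is essentially a self-contained reconstruction of the proof in \cite{Lukepaper}; the key squeeze --- $\|x_n\|\to\|w\|$ together with $\langle x_n,w\rangle\ge\|w\|^2$ forcing $\|x_n-w\|^2\le\|x_n\|^2-\|w\|^2\to 0$ --- is exactly Pazy's lemma (used elsewhere in the paper as \cite[Lemma~2]{Pazy1970}). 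For the second containment your route is in fact more elementary than invoking Zarantonello's full identity $\overline{\ran(\Id-\Pj{C})}=(\rec C)^\ominus$: you need only the one-sided inclusion $(\Id-\Pj{C})(X)\subseteq(\rec C)^\ominus$, and you extract it directly from the obtuse-angle characterization of the projection. Either approach works; yours is self-contained, the paper's is a two-line appeal to the literature.
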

	\begin{proof}
		This follows from
		\cite[Corollary~2.7]{Lukepaper} and \cite[Theorem~3.1]{Zara}.
	\end{proof}
	
	\begin{lemma}
		\label{lem:rec:dom:ran}
		The following hold for $A$ and $B$ (see \cref{e:AandB}):
		\begin{enumerate}
			\item
			\label{lem:rec:dom:ran:i}
			$(\rec \cdom  A)^\ominus\subseteq \rec (\cran   A)$ 
			and $(\rec \cdom  B)^\ominus\subseteq \rec (\cran   B)$.\\[-3mm]
			\item
			\label{lem:rec:dom:ran:ii}
			$(\rec {\cran} A)^\ominus\subseteq \rec (\cdom  A)$
			and $(\rec {\cran} B)^\ominus\subseteq \rec (\cdom  B)$.
		\end{enumerate}
	\end{lemma}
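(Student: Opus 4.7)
The plan is to prove (i) directly via Minty's surjectivity theorem together with a monotonicity-based compactness argument, and then to derive (ii) by applying (i) to $A^{-1}$ and $B^{-1}$ (which are maximally monotone with $\dom A^{-1}=\ran A$ and $\ran A^{-1}=\dom A$, so the roles of domain and range swap to give the second pair of inclusions). Since the argument for $B$ is identical to that for $A$, I focus on establishing $(\rec\cdom A)^\ominus\subseteq\rec\cran A$.

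Fix $u\in(\rec\cdom A)^\ominus$, $t\geq 0$, and $(x_0,x_0^*)\in\gra A$. By Minty's theorem, for each $\lambda>0$ there exists $x_\lambda\in X$ with $tu+x_0^*-\lambda(x_\lambda-x_0)\in Ax_\lambda$. Pairing this inclusion in $\gra A$ with $(x_0,x_0^*)$ under monotonicity yields
$$t\langle x_\lambda - x_0,\,u\rangle \geq \lambda\|x_\lambda - x_0\|^2,$$
and Cauchy--Schwarz produces the uniform bound $\lambda\|x_\lambda-x_0\|\leq t\|u\|$. The crux is to extract a sequence $\lambda_n\to 0^+$ along which $\lambda_n(x_{\lambda_n}-x_0)\to 0$; once this is in hand, the elements $tu+x_0^*-\lambda_n(x_{\lambda_n}-x_0)\in\ran A$ converge strongly to $tu+x_0^*$, placing the latter in $\cran A$.

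The hard case is when $(x_\lambda)_{\lambda>0}$ is unbounded as $\lambda\to 0^+$. Passing to a subsequence with $\lambda_n\to 0^+$ and $\|x_{\lambda_n}-x_0\|\to\infty$, the normalized directions $d_n\coloneqq(x_{\lambda_n}-x_0)/\|x_{\lambda_n}-x_0\|$ admit (by weak compactness) a weak cluster point $d$. For any fixed $s>0$ and all large $n$, the convex combination
$$x_0+sd_n=\Big(1-\tfrac{s}{\|x_{\lambda_n}-x_0\|}\Big)x_0+\tfrac{s}{\|x_{\lambda_n}-x_0\|}x_{\lambda_n}$$
lies in $\cdom A$; weak closedness of the convex set $\cdom A$ then yields $x_0+sd\in\cdom A$, i.e., $d\in\rec\cdom A$, and hence $\langle u,d\rangle\leq 0$. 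Dividing the key inequality by $\|x_{\lambda_n}-x_0\|$ gives $t\langle d_n,u\rangle\geq \lambda_n\|x_{\lambda_n}-x_0\|$, and passing to $\limsup$ produces $\limsup_n \lambda_n\|x_{\lambda_n}-x_0\|\leq t\langle d,u\rangle\leq 0$, forcing $\lambda_n(x_{\lambda_n}-x_0)\to 0$.

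Thus $x_0^*+tu\in\cran A$ for every $(x_0,x_0^*)\in\gra A$ and every $t\geq 0$. Approximating an arbitrary $y^*\in\cran A$ by elements of $\ran A$ and invoking the closedness of $\cran A$ extends this to $y^*+tu\in\cran A$ for all $y^*\in\cran A$ and $t\geq 0$, which is exactly $u\in\rec\cran A$. The main obstacle throughout is the unbounded case: it is precisely the polar-cone hypothesis on $u$ that, via the recession direction $d\in\rec\cdom A$ extracted from the normalized sequence, annihilates $\limsup_n\lambda_n\|x_{\lambda_n}-x_0\|$ and closes the argument.
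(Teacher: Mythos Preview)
Your proof is correct and takes a genuinely different route from the paper's. The paper observes that $A=A+\N{\cdom A}$ with $\N{\cdom A}$ being $3^*$ monotone, applies the Brezis--Haraux range theorem to obtain $\cran A+\cran\N{\cdom A}\subseteq\cran A$, and then invokes Zarantonello's identity $\cran\N{\cdom A}=\cran(\Id-\Pj{\cdom A})=(\rec\cdom A)^\ominus$ to conclude. Part~(ii) is derived exactly as you do, by applying~(i) to $A^{-1}$.

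Your argument avoids both Brezis--Haraux and Zarantonello entirely, relying only on Minty surjectivity, monotonicity, and weak compactness of bounded sets. The trade-off is that the paper's proof is a short concatenation of two known theorems, whereas yours is longer and requires the case split plus the recession-direction extraction in the unbounded case; on the other hand, your approach is fully self-contained and does not depend on the $3^*$ monotonicity machinery. Both establish the same inclusion and share the reduction of~(ii) to~(i) via inversion.
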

	
	\begin{proof} It suffices to prove the statements for $A$. 
		\cref{lem:rec:dom:ran:i}:
		Observe that using, e.g., \cite[Corollary~21.14~and~Example~25.14]{BC}
		$\cdom  A$ and $\cran A$ are nonempty closed and
		convex subsets of $X$, that $\N{\cdom  A}$
		is $3^*$ monotone and maximally monotone and that    
		$A=A+\N{\cdom  A}$.
		On the one hand,
		it follows from Brezis--Haraux theorem (see, e.g., \cite[Theorem~25.24(ii)]{BC})
		applied to $A$ and $\N{\cdom  A}$ that
		\begin{equation}
			{\cran}A+{\cran}\N{\cdom  A}
			\subseteq \overline{\ran A+\ran \N{\cdom  A}}
			={\cran}(A+\N{\cdom  A})
			={\cran}A.
		\end{equation}
		Hence
		\begin{equation}
			\label{e:210522:a}
			\cran  \N{\cdom  A}
			\subseteq
			\rec \cran  A.
		\end{equation}
		On the other hand, it follows from \cite[Theorem~3.1]{Zara}
		that
		\begin{equation}
			\label{e:210522:b}
			\cran  \N{\cdom  A}
			=\cran  (\Id-\Pj{\cdom  A})
			=(\rec \cdom  A)^\ominus.
		\end{equation}
		Now combine \cref{e:210522:a}
		and \cref{e:210522:b}.
		\cref{lem:rec:dom:ran:ii}:
		Apply   \cref{lem:rec:dom:ran:i}
		to $A^{-1}$.
	\end{proof}

We are now able to derive new information about the location
of $v_D$ and $v_R$. When we specialize to subdifferential operators, 
then one obtains a recent result 
(see 
\cite[Proposition~3.2]{B21})
that was proved differently 
by using recession functions 
(which are unavailable in general). 
These results in turn generalize
\cite[Proposition~2.3]{130}. 
	
	\begin{proposition}
		\label{prop:v:decom}
		The following hold:
		\begin{enumerate}
    \setlength\itemsep{0.35em}
			\item
			\label{prop:v:decom:i}
			$v_D\in (-\rec \cdom  A)^\ominus\cap (\rec \cdom  B)^\ominus
			=(-(\rec \cdom  A)^\ominus)\cap (\rec \cdom  B)^\ominus$.
			\item
			\label{prop:v:decom:ii}
			$v_D\in (-\rec \cran   A)\cap (\rec \cran   B)$.
			\item
			\label{prop:v:decom:iii}
			$v_R\in  (-\rec \cran   A)^\ominus\cap (-\rec \cran   B)^\ominus
			=-((\rec \cran   A)^\ominus\cap (\rec \cran   B)^\ominus)$.
			\item
			\label{prop:v:decom:iv}
			$v_R\in(-\rec \cdom  A)\cap (-\rec \cdom  B)
			=-(\rec \cdom  A\cap\rec \cdom  B)$.
			\item
			\label{prop:v:decom:v}
			$\scal{v_D}{v_R}=0$.
			\item
			\label{prop:v:decom:vi}
			${v_D}+{v_R}\in \overline{\dom A-\dom B}\cap\overline{\ran A+\ran B}$.
			\item
			\label{prop:v:decom:viii}
			$v=v_D+v_R $.
			\item
			\label{prop:v:decom:ix}
			$\norm{v}^2=\norm{v_D}^2+\norm{v_R}^2   =\norm{(v_R,v_D)}^2$.
			
		\end{enumerate}
	\end{proposition}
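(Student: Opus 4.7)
The plan is to derive items \cref{prop:v:decom:i}--\cref{prop:v:decom:iv} quickly from \cref{f:geo2sets} combined with \cref{lem:rec:dom:ran}, then establish the orthogonality in \cref{prop:v:decom:v} from these inclusions, then prove the membership \cref{prop:v:decom:vi} by a recession-cone shift argument, which will yield \cref{prop:v:decom:viii} via the variational characterization of the projection onto a closed convex set; finally \cref{prop:v:decom:ix} will be immediate by Pythagoras.

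For \cref{prop:v:decom:i}, apply \cref{f:geo2sets} with $U=\dom A$ and $V=\dom B$ so that
$v_D=\Pj{\overline{U-V}}(0)\in(-\rec\cdom A)^\ominus\cap(\rec\cdom B)^\ominus$;
the identity $(-\rec\cdom A)^\ominus=-(\rec\cdom A)^\ominus$ is a direct polarity computation. For \cref{prop:v:decom:iii}, apply \cref{f:geo2sets} again with $U=\ran A$ and $V=-\ran B$, noting that $U-V=\ran A+\ran B$ and $\rec(-\cran B)=-\rec\cran B$. Items \cref{prop:v:decom:ii} and \cref{prop:v:decom:iv} then follow by combining \cref{prop:v:decom:i} and \cref{prop:v:decom:iii}, respectively, with the inclusions in \cref{lem:rec:dom:ran}.

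For \cref{prop:v:decom:v}, use \cref{prop:v:decom:iv} to write $v_R=-r$ with $r\in\rec\cdom A$; then \cref{prop:v:decom:i} gives $\scal{r}{v_D}\ge 0$, hence $\scal{v_D}{v_R}=-\scal{v_D}{r}\le 0$. Symmetrically, writing $v_R=-s$ with $s\in\rec\cdom B$ from \cref{prop:v:decom:iv} and using $v_D\in(\rec\cdom B)^\ominus$ from \cref{prop:v:decom:i} produces $\scal{v_D}{v_R}\ge 0$, forcing equality to zero.

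For \cref{prop:v:decom:vi}, approximate $v_D=\lim_n(a_n-b_n)$ with $a_n\in\dom A$ and $b_n\in\dom B$; since $-v_R\in\rec\cdom B$ by \cref{prop:v:decom:iv}, the translates $b_n-v_R$ lie in $\cdom B$, so $a_n-(b_n-v_R)\to v_D+v_R$ places $v_D+v_R$ in $\overline{\dom A-\dom B}$. The dual trick, using $v_D\in\rec\cran B$ from \cref{prop:v:decom:ii} applied to an approximation $v_R=\lim_n(a_n^*+b_n^*)$, yields $v_D+v_R\in\overline{\ran A+\ran B}$; by the standing assumption \cref{e:assump:free:fd}, $v_D+v_R\in\cran(\Id-T)$. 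For \cref{prop:v:decom:viii}, it then suffices to verify that $v_D+v_R$ satisfies the variational inequality that characterizes $\Pj{\cran(\Id-T)}(0)$: for any $u\in\overline D\cap\overline R$, the two projection inequalities $\scal{v_D}{u}\ge\norm{v_D}^2$ and $\scal{v_R}{u}\ge\norm{v_R}^2$ add, and the orthogonality from \cref{prop:v:decom:v} upgrades the right-hand side to $\norm{v_D+v_R}^2$, so $v=v_D+v_R$ by uniqueness of the projection. Finally \cref{prop:v:decom:ix} is just the Pythagorean identity combining \cref{prop:v:decom:v} and \cref{prop:v:decom:viii}. I expect \cref{prop:v:decom:vi} to be the main technical step, since one must carefully pick which side of the recession-cone inclusion to apply so that the translation both stays in the correct closed set and produces $v_D+v_R$ (rather than $v_D-v_R$) in the limit.
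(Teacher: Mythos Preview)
Your proposal is correct and follows essentially the same route as the paper: items \cref{prop:v:decom:i}--\cref{prop:v:decom:iv} via \cref{f:geo2sets} and \cref{lem:rec:dom:ran}, orthogonality from the cone inclusions, the recession-cone shift for \cref{prop:v:decom:vi}, and then the projection characterization for \cref{prop:v:decom:viii}. Two cosmetic differences: for \cref{prop:v:decom:v} you extract both inequalities from \cref{prop:v:decom:i} and \cref{prop:v:decom:iv} alone (the paper pairs \cref{prop:v:decom:i}+\cref{prop:v:decom:iv} and \cref{prop:v:decom:ii}+\cref{prop:v:decom:iii}), and for \cref{prop:v:decom:viii} you verify the variational inequality directly whereas the paper shows $\norm{v_D+v_R}=\norm{v}$ and invokes \cite[Lemma~2]{Pazy1970}; both variants are equivalent.
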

	
	\begin{proof}
		\cref{prop:v:decom:i}:
		Apply \cref{f:geo2sets}
		with $(U,V)$ replaced by
		$(\cdom  A,\cdom  B)$.
		
		\cref{prop:v:decom:ii}:
		Combine \cref{prop:v:decom:i}
		and \cref{lem:rec:dom:ran}\cref{lem:rec:dom:ran:i}.
		
		\cref{prop:v:decom:iii}:
		Apply \cref{f:geo2sets}
		with $(U,V)$ replaced by
		$(\cran   A,-\cran   B)$.
		
		\cref{prop:v:decom:iv}:
		Combine \cref{prop:v:decom:iii}
		and \cref{lem:rec:dom:ran}\cref{lem:rec:dom:ran:ii}.
		
		\cref{prop:v:decom:v}:
		It follows from \cref{prop:v:decom:i} and \cref{prop:v:decom:iv}
		that $(-v_D,-v_R)\in (\rec\cdom  A)^\ominus\times \rec\cdom  A$.
		Hence $\scal{v_D}{v_R}=\scal{-v_D}{-v_R}\le 0$.
		Similary,  \cref{prop:v:decom:ii} and \cref{prop:v:decom:iii}
		imply that
		$(v_D,-v_R)\in \rec\cran   B\times(\rec\cran   B)^\ominus$.
		Hence, $-\scal{v_D}{v_R}=\scal{v_D}{-v_R}\le 0$.
		Altogether, $\scal{v_D}{v_R}=0$.
		
		\cref{prop:v:decom:vi}:
		Indeed, in view of \cref{prop:v:decom:iv}
		we have $-v_R\in \rec \cdom  B$.
		Therefore,
		${v_D}+{v_R}
		\in \overline{\dom A-\dom B}+{v_R}
		=\overline{\dom A-(-v_R+\cdom B)}
		\subseteq  \overline{\dom A-\cdom  B}
		=\overline{\dom A-\dom B}$.
		Similarly, in view of \cref{prop:v:decom:ii}
		we have $v_D\in \rec \cran   B$.
		Therefore
		${v_D}+{v_R}
		\in\overline{\ran A+v_D+\cran B}
		\subseteq \overline{\ran A+\cran   B}
		=
		\overline{\ran A+{\ran} B}
		$.
		
		\cref{prop:v:decom:viii}:
		Observe that
		\cref{prop:v:decom:vi}
		and \cref{e:assump:free:fd}
		imply that
		$\norm{v}\le\norm{v_D+v_R}$.
		It follows from \cref{prop:v:decom:v},
		the definition of $v$ and $v_D$
		that
		$\norm{v_D}^2\le \scal{v_D}{\overline{D}}$,
		hence
		$\norm{v_D}^2\le \scal{v_D}{\overline{D}\cap \overline{R}}$.
		Similarly,
		\cref{prop:v:decom:v},
		the definition of $v$ and
		and $v_R$
		implies
		$\norm{v_R}^2\le \scal{v_R}{\overline{R}}$,
		hence
		$\norm{v_R}^2\le \scal{v_R}{\overline{D}\cap \overline{R}}$.
		Therefore using
		Cauchy-Schwarz
		we learn that
		$\norm{v_D+v_R}^2
		=\norm{v_D}^2+\norm{v_R}^2
		\le \scal{v}{v_D}+\scal{v}{v_R}
		=\scal{v}{v_D+v_R}\le \norm{v}\norm{v_D+v_R}$.
		Hence, $\norm{v_D+v_R}\le \norm{v}$.
		Altogether, $\norm{v}=\norm{v_D+v_R}$.
		In view of \cref{e:assump:free:fd},
		\cref{prop:v:decom:vi}
		and \cite[Lemma~2]{Pazy1970},
		we learn that
		$v=v_D+v_R$.
		
		\cref{prop:v:decom:ix}:
		Combine
		\cref{prop:v:decom:v}
		and \cref{prop:v:decom:viii}.
	\end{proof}
	
	\begin{remark}[{\bf The real line case}]
		Suppose that $X=\RR$.
		It follows from \cref{prop:v:decom}\cref{prop:v:decom:v}
		that $v_Dv_R=0$ which implies that
		\begin{equation}
			0\in \{v_D,v_R\}.
		\end{equation}
		This conclusion
		is no longer true when $X\neq \RR$ as we illustrate in
		\cref{ex:not:eq:Z} and \cref{ex:eq:Z} below.
	\end{remark}
  
The proof of the last result in this section requires the fact that 
if  $C$ is a nonempty closed convex subset of $X$ and $x\in X$, then 
(see, e.g., \cite[Proposition~29.1(iii)]{BC})
	\begin{equation}
		\label{e:cones:-}
		\Pj{-C}(x)=-\Pj{C}(-x).
	\end{equation}
	
	\begin{corollary}
		\label{cor:relate:vs}
		The following hold: 
		\begin{enumerate}
			\item
			\label{cor:relate:vs:i}
			$v_D=\Pj{(\rec \cdom A)^\oplus}v$.
			\item
			\label{cor:relate:vs:ii}
			$v_R=\Pj{-\rec \cdom A}v$.
			\item
			\label{cor:relate:vs:iii}
			$v_D=\Pj{-\rec \cran A}v$.
			\item
			\label{cor:relate:vs:iv}
			$v_R=\Pj{(\rec \cran A)^\oplus}v$.
		\end{enumerate}
	\end{corollary}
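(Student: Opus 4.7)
The plan is to derive all four identities from a single application of Moreau's conic decomposition theorem: if $K$ is a closed convex cone in $X$, then every $x\in X$ splits uniquely as $x=\Pj{K}x+\Pj{K^\ominus}x$ with $\scal{\Pj{K}x}{\Pj{K^\ominus}x}=0$, and conversely, any decomposition $x=u+w$ with $u\in K$, $w\in K^\ominus$, and $\scal{u}{w}=0$ must satisfy $u=\Pj{K}x$ and $w=\Pj{K^\ominus}x$.

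For \cref{cor:relate:vs:i} and \cref{cor:relate:vs:ii}, I would take $K:=(\rec\cdom A)^\oplus$, which is a closed convex cone whose polar works out to $K^\ominus=-\rec\cdom A$ (via the bipolar theorem together with the standard identities $C^\oplus=-C^\ominus$ and $(-C)^\ominus=-(C^\ominus)$). The decomposition $v=v_D+v_R$ from \cref{prop:v:decom:viii}, the orthogonality from \cref{prop:v:decom:v}, the containment $v_D\in K$ from \cref{prop:v:decom:i}, and the containment $v_R\in K^\ominus$ from \cref{prop:v:decom:iv} together exhibit $v=v_D+v_R$ as the Moreau decomposition of $v$ relative to $K$. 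Uniqueness then forces $v_D=\Pj{(\rec\cdom A)^\oplus}v$ and $v_R=\Pj{-\rec\cdom A}v$ simultaneously.

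For \cref{cor:relate:vs:iii} and \cref{cor:relate:vs:iv}, the plan is symmetric: set $K:=-\rec\cran A$, so that $K^\ominus=(-\rec\cran A)^\ominus=-(\rec\cran A)^\ominus=(\rec\cran A)^\oplus$. Now \cref{prop:v:decom:ii} gives $v_D\in K$, \cref{prop:v:decom:iii} gives $v_R\in K^\ominus$, and the orthogonal decomposition $v=v_D+v_R$ is again in place; Moreau's theorem then delivers $v_D=\Pj{-\rec\cran A}v$ and $v_R=\Pj{(\rec\cran A)^\oplus}v$.

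No serious obstacle should arise, because all of the substantive geometric content has already been packaged into \cref{prop:v:decom}. What remains is essentially bookkeeping with the sign conventions for polar cones --- most importantly $C^\oplus=-C^\ominus$ and $(-C)^\ominus=-(C^\ominus)$ --- in order to recognize that each of the four sets named in the corollary is either $K$ or $K^\ominus$ for one of the two cones chosen above. The identity \cref{e:cones:-} recorded just before the corollary is tailored for exactly these manipulations.
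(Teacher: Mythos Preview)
Your proposal is correct and follows essentially the same route as the paper: both arguments invoke the Moreau cone decomposition (which is exactly \cite[Proposition~6.28]{BC} cited in the paper) after extracting from \cref{prop:v:decom} the four ingredients $v=v_D+v_R$, $\scal{v_D}{v_R}=0$, and the cone memberships of $v_D$ and $v_R$. The only cosmetic difference is that the paper first negates to work with the pair $\big((\rec\cdom A)^\ominus,\rec\cdom A\big)$ and then flips signs via \cref{e:cones:-}, whereas you choose $K=(\rec\cdom A)^\oplus$ from the outset and thereby avoid that detour.
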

	\begin{proof}
		Observe that $\rec \cdom A$
		and $\rec\cran A$ are closed by, e.g., \cite[Proposition~6.49(v)]{BC}.
		\cref{cor:relate:vs:i}\&\cref{cor:relate:vs:ii}:
		It follows from
		\cref{prop:v:decom}\cref{prop:v:decom:i},\cref{prop:v:decom:iv},\cref{prop:v:decom:v}\&\cref{prop:v:decom:viii}
		that $(-v_D,-v_R)\in (\rec \cdom A)^\ominus \times \rec \cdom A$,
		that $v_D\perp v_R$ and that $-v_D=-v-(-v_R)$.
		Now combine with \cite[Proposition~6.28]{BC} to learn that
		$-v_D=\Pj{(\rec \cdom A)^\ominus}(-v)$; equivalently,
		$v_D=-\Pj{(\rec \cdom A)^\ominus}(-v)=\Pj{-(\rec \cdom A)^\ominus}v
		=\Pj{(\rec \cdom A)^\oplus}v$, where the third identity followed from applying
		\cref{e:cones:-} with $(x,C)$ replaced by $(v,(\rec \cdom A)^\ominus)$.
		Similarly,
		$-v_R=\Pj{\rec \cdom A}(-v)$; equivalently,
		$v_R=-\Pj{\rec \cdom A}(-v)=\Pj{-\rec \cdom A}v$,
		where the last identity followed from applying
		\cref{e:cones:-} with $(x,C)$ replaced by $(v,\rec \cdom A)$.
		
		\cref{cor:relate:vs:iii}\&\cref{cor:relate:vs:iv}:
		It follows from
		\cref{prop:v:decom}\cref{prop:v:decom:ii},\cref{prop:v:decom:iii},\cref{prop:v:decom:v}\&\cref{prop:v:decom:viii}
		that $(-v_D,-v_R)\in \rec \cran A \times (\rec \cran A)^\ominus$,
		that $v_D\perp v_R$, and that $-v_D=-v-(-v_R)$.
		Now proceed similar to the proof of \cref{cor:relate:vs:i}\&\cref{cor:relate:vs:ii}.
	\end{proof}
	
\section{Static consequences}
 
\label{sec:static}
 
In this section, we present results on the interplay 
between the vectors $v,v_D,v_R$ (see \cref{e:devall}),
the Douglas--Rachford operator $T$ (see \cref{e:def:T}), 
and  the generalized solution set $Z$ (see \cref{e:def:Zv}).
Working in the  product space $X\times X$, 
  we restate \cref{e:def:vD:vR}
	(see, e.g., \cite[Proposition~29.4]{BC}) as:
	\begin{equation}
		\label{e:proj:ps}
		\Pj{\overline{\dom A-\dom B}\times \overline{\ran A+\ran B}}(0)
		=(v_D,v_R).
	\end{equation}
The next result relates $(v_D,v_R)$ to the Douglas--Rachford operator $T$ 
defined in \cref{e:def:T}:
  
	\begin{lemma}
		\label{lem:pd:sh:bdd:iii:c}
		The following hold:
		\begin{enumerate}
			\item
			\label{prop:pd:sh:bdd:iii:c}
			Suppose that $f\in \fix (v+T)$. Then
			\begin{subequations}
				\begin{align}
					Tf           & =f-v,
					\label{e:Tf:f}
					\\
					\J{A}Tf        & =\J{A}f-v_R,
					\label{e:JA:Tf:f}
					\\
					\J{A^{-1}}Tf & =\J{A^{-1}}f-v_D.
					\label{e:JAinv:Tf:f}
				\end{align}
			\end{subequations}
			Moreover,
\begin{subequations}
\label{e:ortho}
\begin{align}
\scal{\J{A}f-\J{A}Tf}{\J{A^{-1}}f-\J{A^{-1}}Tf} \label{e:ortho1}
&=\scal{\J{B}\R{A}f-\J{B}\R{A}Tf}{\J{B^{-1}}\R{A}f-\J{B^{-1}} \R{A}Tf}\\
&=0. \label{e:ortho2}
\end{align} 
\end{subequations}
        \item
			\label{prop:pd:sh:bdd:v}
			$ v\in \ran(\Id-T)\siff \fix (v+T)\neq \fady$ 
      $\RA$ $(v_D,v_R)\in D\times R$ (which were defined in \cref{e:DandR}).
		\end{enumerate}
		
	\end{lemma}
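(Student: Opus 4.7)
The plan is to extract the orthogonal decomposition $v=v_D+v_R$ of Proposition~\ref{prop:v:decom} from the pair $(a^*,a):=(\J{A^{-1}}f-\J{A^{-1}}Tf,\;\J{A}f-\J{A}Tf)$, which by \cref{lem:asymp:shadow} lies in $D\times R$ and whose coordinates automatically sum to $f-Tf$. The relation \cref{e:Tf:f} is immediate from $f\in\fix(v+T)$, so the heart of the argument is the orthogonality \cref{e:ortho}. The main technical step will be to upgrade certain nonexpansivity inequalities to equalities by invoking the minimum-norm property of $v=\Pj{\cran(\Id-T)}(0)$.

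To prove \cref{e:ortho}, I use that $T=\tfrac12(\Id+\R{B}\R{A})$ is firmly nonexpansive. Applying the firm-nonexpansivity inequality at $(x,y)=(f,Tf)$ gives $\|Tf-T^2f\|^2\le\scal{v}{Tf-T^2f}$; combined with the projection property $\scal{v}{u}\ge\|v\|^2$ valid for every $u\in\ran(\Id-T)$, a short Cauchy--Schwarz squeeze forces $Tf-T^2f=v$. Since $\R{B}\R{A}=2T-\Id$, this yields $\R{B}\R{A}f-\R{B}\R{A}Tf=v$, and then the chain $\|v\|=\|\R{B}\R{A}f-\R{B}\R{A}Tf\|\le\|\R{A}f-\R{A}Tf\|\le\|f-Tf\|=\|v\|$ (from nonexpansivity of $\R{A}$ and $\R{B}$) collapses to equalities. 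Using $\Id=\J{A}+\J{A^{-1}}$ and $\R{A}=\J{A}-\J{A^{-1}}$, one has $a+a^*=v$ and $a-a^*=\R{A}f-\R{A}Tf$; since $\|a+a^*\|=\|a-a^*\|$, expanding $\|a+a^*\|^2-\|a-a^*\|^2=4\scal{a}{a^*}$ yields $\scal{a}{a^*}=0$, which is \cref{e:ortho1}. For \cref{e:ortho2}, expanding $\R{B}\R{A}f-\R{B}\R{A}Tf=v$ via $\R{B}=2\J{B}-\Id$ applied to $\R{A}f$ and $\R{A}Tf$ produces the relations $b=a$ and $b^*=-a^*$, and therefore $\scal{b}{b^*}=-\scal{a}{a^*}=0$.

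With \cref{e:ortho} in hand, Pythagoras gives $\|a^*\|^2+\|a\|^2=\|a+a^*\|^2=\|v\|^2$, while Proposition~\ref{prop:v:decom}\cref{prop:v:decom:ix} gives $\|v\|^2=\|v_D\|^2+\|v_R\|^2$. Thus $(a^*,a)\in D\times R\subseteq\overline{D}\times\overline{R}$ attains the same product-space norm as $(v_D,v_R)=\Pj{\overline{D}\times\overline{R}}(0)$ (cf.~\cref{e:proj:ps}), so uniqueness of the projection forces $(a^*,a)=(v_D,v_R)$, establishing \cref{e:JA:Tf:f} and \cref{e:JAinv:Tf:f}. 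Part \cref{prop:pd:sh:bdd:v} is then a corollary: the equivalence $v\in\ran(\Id-T)\Leftrightarrow\fix(v+T)\ne\fady$ is direct from $y-Ty=v\Leftrightarrow y=v+Ty$, and for any $f\in\fix(v+T)$ the identifications above combined with \cref{lem:asymp:shadow} place $v_R=\J{A}f-\J{A}Tf\in R$ and $v_D=\J{A^{-1}}f-\J{A^{-1}}Tf\in D$ (the set memberships, not merely their closures).
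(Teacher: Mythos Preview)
Your proof is correct. The route differs from the paper's in how the orthogonality $\scal{a}{a^*}=0$ is obtained (writing $a=\J{A}f-\J{A}Tf$, $a^*=\J{A^{-1}}f-\J{A^{-1}}Tf$). The paper uses a single sandwich: monotonicity of $A$ via \cref{Min:par} gives $\scal{a}{a^*}\ge 0$, the product projection \cref{e:proj:ps} gives $\|a\|^2+\|a^*\|^2\ge\|v_R\|^2+\|v_D\|^2$, and combining these with $\|a+a^*\|^2=\|v\|^2=\|v_R\|^2+\|v_D\|^2$ forces both inequalities to collapse, delivering $\scal{a}{a^*}=0$ and $(a^*,a)=(v_D,v_R)$ in one stroke. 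You instead first establish $Tf-T^2f=v$ explicitly, then squeeze $\|\R{A}f-\R{A}Tf\|$ between $\|\R{B}\R{A}f-\R{B}\R{A}Tf\|=\|v\|$ and $\|f-Tf\|=\|v\|$ via nonexpansivity of the reflected resolvents to get $\|a-a^*\|=\|a+a^*\|$ and hence $\scal{a}{a^*}=0$; only afterwards do you invoke the product projection for the identification. Both paths arrive at the same endpoint, and your detour through $Tf-T^2f=v$ is not wasted: the paper also needs exactly this fact (used tacitly via $(\Id-T)Tf=v$) to derive $b=a$ and $b^*=-a^*$ for \cref{e:ortho1}. One minor point: your references to \cref{e:ortho1} and \cref{e:ortho2} are swapped relative to the paper's labeling (the former is the equality of the two inner products, the latter is ``$=0$''), but since you show both vanish, nothing is missing.
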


	\begin{proof}
		\cref{prop:pd:sh:bdd:iii:c}:
		\cref{e:Tf:f} is clear.
		Note that \cref{lem:asymp:shadow}\cref{lem:asymp:shadow:i}\&\cref{lem:asymp:shadow:ii}
		applied with $x$ replaced by $f$ yields
		\begin{equation}
			\label{e:210602:a}
			\text
			{$\J{A}f-\J{A}Tf\in \ran A+\ran B~~$
				and
				$~~\J{A^{-1}}f-\J{A^{-1}}Tf\in \dom A-\dom B$.}
		\end{equation}
		In view of \cref{prop:v:decom}\cref{prop:v:decom:ix},
		the Minty parametrization of $\gra A$
		\cref{Min:par},
		and \cref{e:Tf:f}
		imply
		\begin{subequations}
			\begin{align}
				\norm{v_R}^2+\norm{v_D}^2
				& =\norm{\J{A}f-\J{A}Tf}^2
				+\norm{\J{A^{-1}}f-\J{A^{-1}}Tf}^2
				\\
				& \le \norm{\J{A}f-\J{A}Tf}^2
				+\norm{\J{A^{-1}}f-\J{A^{-1}}Tf}^2
				\\
				& \qquad+2\underbrace{\scal{\J{A}f-\J{A}Tf}{\J{A^{-1}}f-\J{A^{-1}}Tf}}_{\ge 0}
				\nonumber\\
				& =\norm{f-Tf}^2
				=\norm{f-(f-v)}^2= \norm{v}^2=\norm{v_R}^2+\norm{v_D}^2.
			\end{align}
		\end{subequations}
		Hence all inequalities become equalities and therefore
		by definition of $v_D$ and $v_R$,
		in view of \cref{e:proj:ps}
		and \cref{e:ortho},
		we must have
		\begin{equation}
			\label{e:210602:b}
			(\J{A}f-\J{A}Tf,\J{A^{-1}}f-\J{A^{-1}}Tf)=(v_R,v_D).
		\end{equation}
		This proves \cref{e:JA:Tf:f} and \cref{e:JAinv:Tf:f}.
    
On the one hand, 
$v=f-Tf=\J{A}f-\J{B}\R{A} f=(\Id-T)Tf= \J{A}Tf-\J{B}\R{A}T f$, hence
$\J{A} f-\J{A} Tf=\J{B}\R{A} f-\J{B}\R{A} Tf$. 
On the other hand, we similarly get 
$v=f-Tf=\J{A^{-1}}f+\J{B^{-1}}\R{A} f=(\Id-T)Tf
=\J{A^{-1}}Tf+\J{B^{-1}}\R{A}T f$,
hence
$\J{A^{-1}} f-\J{A^{-1}} Tf=\J{B^{-1}}\R{A} f-\J{B^{-1}}\R{A} Tf$.
Altogether, \cref{e:ortho1} holds.
Moreover, \cref{e:ortho2} follows from \cref{e:ortho1}, 
\cref{e:210602:b}, and 
\cref{prop:v:decom}\cref{prop:v:decom:v}.

		\cref{prop:pd:sh:bdd:v}:
		Indeed, $\fix (v+T)\neq \fady\siff (\exists x\in X)$
		$x-Tx=v$.
		Now combine \cref{e:210602:a} and \cref{e:210602:b}.
	\end{proof}

In the following result, we relate $v$ to $(v_D,v_R)$: 

	\begin{proposition}
		\label{prop:Zv:2shifts}
		Let $\alpha\ge 0$,
		let $\beta\le 0$
		and let $(x,x^*)\in X\times X$.
		Suppose that $x^*\in (v-Ax)\cap B(x-v)$.
		Then the following hold:
		\begin{enumerate}
    \setlength\itemsep{0.35em}
			\item
			\label{prop:Zv:2shifts:ii}
			$x^*+\alpha v_D\in (v-Ax)\cap  B(x-v)$.
			\item
			\label{prop:Zv:2shifts:iii}
			$x+\beta v_R\in A^{-1}(-x^*+v)\cap (v+B^{-1}x^*)$.
			\item
			\label{prop:Zv:2shifts:iv}
			$x^*\in (v-\alpha v_D-Ax)\cap B(x+\beta v_R-v)$.
		\end{enumerate}
	\end{proposition}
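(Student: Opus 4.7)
The plan is to verify each of the memberships in \cref{prop:Zv:2shifts:ii}, \cref{prop:Zv:2shifts:iii}, \cref{prop:Zv:2shifts:iv} by applying the maximal monotonicity test --- ``$(p,q)\in\gra A$ iff $\scal{a-p}{a^*-q}\ge 0$ for every $(a,a^*)\in\gra A$,'' and analogously for $B$. The non-routine input is a short list of inner-product estimates that come directly from interpreting $v_D$ and $v_R$ as projections of $0$ onto $\overline{\dom A-\dom B}$ and $\overline{\ran A+\ran B}$, respectively.

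I would first record the estimates. The variational characterization of projection gives
\begin{equation}
(\forall (a,b)\in \dom A\times \dom B)\;\; \scal{v_D}{a-b}\ge \|v_D\|^2
\end{equation}
and
\begin{equation}
(\forall (a^*,b^*)\in \ran A\times \ran B)\;\; \scal{v_R}{a^*+b^*}\ge \|v_R\|^2.
\end{equation}
Combined with $\scal{v_D}{v}=\|v_D\|^2$ and $\scal{v_R}{v}=\|v_R\|^2$ (from \cref{prop:v:decom}\cref{prop:v:decom:v}\&\cref{prop:v:decom:viii}), this specializes, via the hypotheses $x\in\dom A$, $x-v\in\dom B$, $v-x^*\in\ran A$, and $x^*\in\ran B$, to the four targeted inequalities $\scal{v_D}{a-x}\ge 0$ for $a\in\dom A$, $\scal{v_D}{b-(x-v)}\le 0$ for $b\in\dom B$, $\scal{v_R}{a^*-(v-x^*)}\ge 0$ for $a^*\in\ran A$, and $\scal{v_R}{b^*-x^*}\ge 0$ for $b^*\in\ran B$.

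For \cref{prop:Zv:2shifts:ii} I would then expand, for $(a,a^*)\in\gra A$,
\begin{equation}
\scal{a-x}{a^*-(v-x^*-\alpha v_D)}=\scal{a-x}{a^*-(v-x^*)}+\alpha\scal{a-x}{v_D};
\end{equation}
the first term is $\ge 0$ by monotonicity of $A$ applied to $(x,v-x^*)$ and $(a,a^*)$, and the second is $\ge 0$ by $\alpha\ge 0$ and the first of the four inequalities above. Maximality of $A$ yields $v-x^*-\alpha v_D\in Ax$, equivalently $x^*+\alpha v_D\in v-Ax$. The $B$-side $x^*+\alpha v_D\in B(x-v)$ is obtained identically, using instead $\scal{v_D}{b-(x-v)}\le 0$ to handle the cross term $-\alpha\scal{b-(x-v)}{v_D}$. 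Statement \cref{prop:Zv:2shifts:iii} follows the same template, but perturbing the first argument by $\beta v_R$; here the condition $\beta\le 0$ is exactly what makes the cross terms $-\beta\scal{v_R}{a^*-(v-x^*)}$ and $-\beta\scal{v_R}{b^*-x^*}$ nonnegative, invoking the third and fourth of the inequalities. Finally, \cref{prop:Zv:2shifts:iv} is an immediate corollary: the $A$-half of \cref{prop:Zv:2shifts:ii} rearranges to $x^*\in v-\alpha v_D-Ax$, and the $B$-half of \cref{prop:Zv:2shifts:iii} reads $x^*\in B(x+\beta v_R-v)$.

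The only real obstacle is bookkeeping: there are four distinct memberships, each with its own sign convention for $\alpha$ or $\beta$, and one must pair each with the correctly oriented inequality among the four (two for $v_D$, two for $v_R$) so that the cross term in the maximal monotonicity test lands with the right sign. Once those estimates are listed upfront, each subclaim reduces to a two- or three-line computation.
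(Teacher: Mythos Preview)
Your proposal is correct and follows essentially the same approach as the paper: both proofs verify each membership via the maximal monotonicity test, with the nontrivial cross terms controlled by the projection characterizations of $v_D$ and $v_R$ together with $v_D\perp v_R$ and $v=v_D+v_R$ from \cref{prop:v:decom}. Your version front-loads the four key inner-product estimates before splitting into cases, whereas the paper derives each one inline, but the substance is identical.
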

	\begin{proof}
		Let $(a,a^*)\in \gra A$ and let $(b,b^*)\in \gra B$.
		\cref{prop:Zv:2shifts:ii}
		On the one hand, observe that
		$v-x^*\in Ax$. Moreover,
		\begin{subequations}
			\begin{align}
				\scal{a-x}{a^*-(v-x^*-\alpha v_D)}
				& =\scal{a-x}{a^*-(v-x^*)}+\scal{a-x}{\alpha v_D}
				\label{e:vd:a:i}
				\\
				& \ge \scal{a-x}{\alpha v_D}=\alpha\scal{v-(a-(x-v))}{0- v_D}
				\label{e:vd:a:ii}
				\\
				& =\alpha\scal{v_D-(a-(x-v))}{0- v_D}\ge 0.
				\label{e:vd:a:iii}
			\end{align}
		\end{subequations}
		Here \cref{e:vd:a:ii} follows from
		the monotonicity of $A$
		and
		\cref{e:vd:a:iii}
		follows from combining
		\cref{prop:v:decom}\cref{prop:v:decom:viii}\&\cref{prop:v:decom:v}
		and \cref{e:def:vD:vR}
		by observing that $(a,x-v)\in \dom A\times \dom B$.
		The maximality of $A$ implies that
		\begin{equation}
			v-x^*-\alpha v_D\in Ax.
		\end{equation}
		On the other hand, observe that
		$x^*\in B(x-v)$.
		Moreover,
		\begin{subequations}
			\begin{align}
				\scal{b-(x-v)}{b^*-(x^*+\alpha v_D)}
				& =\scal{b-(x-v)}{b^*-x^*}+\alpha \scal{b-(x-v)}{0 -v_D}
				\label{e:vd:b:i}
				\\
				& \ge \alpha \scal{b-(x-v)}{0- v_D}
				\label{e:vd:b:ii}
				\\
				& =\alpha\scal{v_D-(x-b)}{0- v_D}\ge 0.
				\label{e:vd:b:iii}
			\end{align}
		\end{subequations}
		Here \cref{e:vd:b:ii} follows from
		the monotonicity of $B$
		and
		\cref{e:vd:b:iii}
		follows from
		combining
		\cref{prop:v:decom}\cref{prop:v:decom:viii}\&\cref{prop:v:decom:v}
		and \cref{e:def:vD:vR}
		by observing that $(x,b)\in \dom A\times \dom B$.
		The maximality of $B$ implies that
		\begin{equation}
			x^*+\alpha v_D\in B(x-v).
		\end{equation}
		Altogether, we conclude that $x^*+\alpha v_D\in (v-Ax)\cap  B(x-v)$.
		
		\cref{prop:Zv:2shifts:iii}:
		On the one hand, observe that
		$x\in A^{-1}(v-x^*) \cap (v+B^{-1}x^*)$.
		Therefore
		\begin{subequations}
			\begin{align}
				\scal{a^*-(v-x^*)}{a-(x+\beta v_R)}
				& =\scal{a^*-(v-x^*)}{a-x}+\beta \scal{a^*-(v-x^*)}{0-v_R}
				\label{e:vr:a:i}
				\\
				& \ge \beta \scal{a^*-(v-x^*)}{0-v_R}
				\label{e:vr:a:ii}\\
				&=\beta \scal{a^*+x^*-v}{0-v_R}
				\\
				& =\beta \scal{{a^*+x^*}-v_R}{0-v_R}\ge 0.
				\label{e:vr:a:iii}
			\end{align}
			Here \cref{e:vr:a:ii} follows from
			the monotonicity of $A^{-1}$
			and
			\cref{e:vr:a:iii}
			follows from combining
			\cref{prop:v:decom}\cref{prop:v:decom:viii}\&\cref{prop:v:decom:v}
			and \cref{e:def:vD:vR}
			by observing that $(a^*,x^*)\in \ran A\times \ran B$.
		\end{subequations}
		The maximality of  $A^{-1}$ implies that
		\begin{equation}
			x+\beta v_R\in A^{-1}{(v-x^*)}.
		\end{equation}
		On the other hand, because $x\in v+B^{-1}x^*$
		we have
		\begin{subequations}
			\begin{align}
				\scal{b^*-x^*}{b-(x+\beta v_R-v)}
				& =\scal{b^*-x^*}{b-(x-v)}+\beta \scal{b^*-x^*}{0-v_R}
				\label{e:vr:b:i}
				\\
				& \ge \beta \scal{b^*+v-x^*-v}{0-v_R}
				\label{e:vr:b:ii}\\
				&=\beta \scal{b^*+v-x^*-v_D-v_R}{0-v_R}
				\\
				& =\beta \scal{b^*+v-x^*-v_R}{0-v_R}
				\ge 0.
				\label{e:vr:b:iii}
			\end{align}
			Here \cref{e:vr:b:ii} follows from
			the monotonicity of $B$
			and
			\cref{e:vr:b:iii}
			follows from combining
			\cref{prop:v:decom}\cref{prop:v:decom:viii}\&\cref{prop:v:decom:v}
			and \cref{e:def:vD:vR}
			by observing that $(-x^*+v,b^*)\in \ran A\times \ran B$.
		\end{subequations}
		The maximality of $B^{-1}$ implies that
		\begin{equation}
			x+\beta v_R-v\in B^{-1}{x^*}.
		\end{equation}
		\cref{prop:Zv:2shifts:iv}:
		It follows from \cref{prop:Zv:2shifts:ii}
		that $x^*\in v-\alpha v_D-Ax$.
		Moreover, \cref{prop:Zv:2shifts:iii}
		implies that
		$x+\beta v_R-v\in B^{-1}x^*$;
		equivalently,
		$x^*\in B(x+\beta v_R-v)$.
		Altogether, we obtain 
		$x^*\in (v-\alpha v_D-Ax)\cap B(x+\beta v_R-v)$.
	\end{proof}
	
	We are now ready for the following  powerful result about the generalized 
	solution
	set $Z$, which we defined in \cref{e:def:Zv} to be 
	\begin{equation}
		Z=\menge{x\in X}{0\in -v+Ax+B(x-v)}.
	\end{equation}
This result provides a useful alternative description of $Z$ when $v_R=0$.

	\begin{theorem}
		\label{prop:Zvv:2shifts}
		Set 
    \begin{equation} 
    \widetilde{Z}\coloneqq \menge{x\in X}{0\in -v_R+Ax+B(x-v_D)}.
    \end{equation}
		Then the following hold:
		\begin{enumerate}
    \setlength\itemsep{0.35em}
			\item
			\label{prop:Zvv:2shifts:i}
			$  \widetilde{Z}\subseteq Z.$
			\item
			\label{prop:Zvv:2shifts:ii}
			Suppose that $v_R=0$. Then $  \widetilde{Z}= Z.$
		\end{enumerate}
	\end{theorem}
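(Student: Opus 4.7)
The plan is to prove (i) by a direct maximal-monotonicity argument, and then to reduce (ii) to (i) plus a single application of \cref{prop:Zv:2shifts:iv}. For (i), I would start with $x\in\widetilde{Z}$ and a witness $x^*$ satisfying $x^*\in Ax$ and $y^*:=v_R-x^*\in B(x-v_D)$. Keeping $x^*$ on the $A$-side, the goal $x\in Z$ reduces to proving $v-x^*\in B(x-v)$; combined with $x^*\in Ax$, this gives $0\in-v+Ax+B(x-v)$.

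To establish $v-x^*\in B(x-v)$, I would appeal to maximal monotonicity of $B$ and verify $\scal{b-(x-v)}{b^*-(v-x^*)}\ge 0$ for every $(b,b^*)\in\gra B$. The key is the pair of identities $b-(x-v)=(b-(x-v_D))+v_R$ and $b^*-(v-x^*)=(b^*-y^*)-v_D$, which expand the inner product into four pieces. The first, $\scal{b-(x-v_D)}{b^*-y^*}$, is nonnegative by monotonicity of $B$ applied to $(b,b^*)$ and $(x-v_D,y^*)$. The second, $-\scal{v_R}{v_D}$, vanishes by \cref{prop:v:decom:v}. The third, $-\scal{b-(x-v_D)}{v_D}$, is nonnegative because $(x,b)\in\dom A\times\dom B$ together with $v_D=\Pj{\overline{\dom A-\dom B}}(0)$ forces $\scal{x-b}{v_D}\ge\|v_D\|^2$. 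The fourth, $\scal{v_R}{b^*-y^*}$, is nonnegative because $(x^*,b^*)\in\ran A\times\ran B$ together with $v_R=\Pj{\overline{\ran A+\ran B}}(0)$ forces $\scal{x^*+b^*}{v_R}\ge\|v_R\|^2$. Summing yields the required inequality, and maximality of $B$ then delivers $v-x^*\in B(x-v)$.

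For (ii), the inclusion $\widetilde{Z}\subseteq Z$ is (i). Conversely, assume $v_R=0$ and let $x\in Z$ with a witness $x^*\in(v-Ax)\cap B(x-v)$. Applying \cref{prop:Zv:2shifts:iv} with $\alpha=1$ and $\beta=0$ produces $x^*\in(v-v_D-Ax)\cap B(x-v)$, and substituting $v-v_D=v_R=0$ and $v=v_D$ translates this into $-x^*\in Ax$ and $x^*\in B(x-v_D)$. Hence $0\in Ax+B(x-v_D)=-v_R+Ax+B(x-v_D)$, so $x\in\widetilde{Z}$. The only mildly delicate point is the bookkeeping in (i): one has to verify that $x,b,x^*,b^*$ lie in the sets featured in the projection characterizations of $v_D$ and $v_R$, which is exactly what makes the four-term expansion nonnegative termwise; everything else is a clean consequence of \cref{prop:v:decom} and \cref{prop:Zv:2shifts}.
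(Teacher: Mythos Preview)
Your proposal is correct and follows essentially the same route as the paper: the same maximal-monotonicity verification on the $B$-side for (i), expanding the inner product via $v=v_D+v_R$ and handling the four pieces with monotonicity of $B$, orthogonality of $v_D$ and $v_R$, and the projection characterizations of $v_D,v_R$; and for (ii) the same appeal to \cref{prop:Zv:2shifts}\cref{prop:Zv:2shifts:iv} with $\alpha=1$. The only cosmetic difference is your sign convention for the witness $x^*$ (you take $x^*\in Ax$ where the paper takes $x^*\in -Ax$), which does not affect the argument.
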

	\begin{proof}
		\cref{prop:Zvv:2shifts:i}:
		Suppose that $x\in \widetilde{Z}$.
		Then $(\exists x^*\in X)$ such that
		$x^*\in (-Ax)\cap(-v_R +B(x-v_D))$.
		Let $(b,b^*)\in \gra B$.  We have
		\begin{subequations}
			\begin{align}
				\scal{b-(x-v)}{b^*-(x^*+v)}
				& =\scal{b-(x-v_D-v_R)}{b^*-(x^*+v_D+v_R)}
				\label{e:opp:inc:-1}\\
				& =\scal{b-(x-v_D)}{b^*-(x^*+v_R)}
				\label{e:opp:inc:0}\\
				&\qquad +\scal{b-(x-v_D)}{0-v_D} 
				+\scal{v_R}{b^*-(x^*+v_R)} \nonumber \\
				& \ge \scal{v_D-(x-b)}{0-v_D}+\scal{v_R}{b^*-x^*-v_R}
				\label{e:opp:inc}\\
        &\geq 0 \label{e:opp:inc+}
			\end{align}
		\end{subequations}
where 
\cref{e:opp:inc:-1} follows from 
\cref{prop:v:decom}\cref{prop:v:decom:viii},
where	\cref{e:opp:inc:0} follows from \cref{prop:v:decom}\cref{prop:v:decom:v},
where \cref{e:opp:inc}
		follows from the monotonicity
		of $B$ and   \cref{prop:v:decom}\cref{prop:v:decom:viii},
    and 
where \cref{e:opp:inc+} follows from
		definitions of $v_D$
		and $v_R$
		by observing that
		$(x,b)\in \dom A\times \dom B$
		and $(-x^*,b^*)\in \ran B\times \ran A$.
		The maximality of $B$ yields
		$x^*+v\in B(x-v)$.
		Recalling that
		$-x^*\in Ax$ we learn that $x\in Z$.
		
		\cref{prop:Zvv:2shifts:ii}:
		In view of \cref{prop:Zvv:2shifts:i} it is sufficient
		to prove the inclusion $Z\subseteq  \widetilde{Z}$.
		To this end, let $x\in Z$. Observe that
		\cref{prop:v:decom}\cref{prop:v:decom:viii}
		implies that $v=v_D$.
		Then $(\exists x^*\in X)$ such that
		$x^*\in (v-Ax)\cap B(x-v)$.
		It follows from \cref{prop:Zv:2shifts}\cref{prop:Zv:2shifts:iv}
		applied with
		$\alpha =1$ and \cref{prop:v:decom}\cref{prop:v:decom:viii}
		that
		$x^*\in (v-v_D-Ax)\cap B(x-v)= (0-Ax)\cap B(x-v_D)$.
		We conclude that $x\in \widetilde{Z}$.
	\end{proof}
	\begin{remark}
\label{r:heartpainback}
		Some comments on \cref{prop:Zvv:2shifts} are in order.
		\begin{enumerate}
			\item
			The assumption $v_R=0$ is critical in the conclusion of
			\cref{prop:Zvv:2shifts}\cref{prop:Zvv:2shifts:ii}
			as we illustrate in \cref{ex:not:eq:Z} below.
      \cref{ex:not:eq:Z} also shows that the inclusion \cref{prop:Zvv:2shifts}\cref{prop:Zvv:2shifts:i} cannot be improved to equality in general.
			\item
			The converse of \cref{prop:Zvv:2shifts}\cref{prop:Zvv:2shifts:ii}
			is not true
			as we illustrate in \cref{ex:eq:Z} below.
		\end{enumerate}
	\end{remark}

Before we present the limiting examples announced in 
\cref{r:heartpainback}, we recall that if 
	$C$ is a nonempty closed convex subset of $X$ and 
	$a\in X$, then 
	\begin{equation}
		\label{eq:shift:NC}
		\N{a+C}=\N{C}(\cdot-a).
	\end{equation}
	
	\begin{example}
		\label{ex:not:eq:Z}
		Suppose that $X=\RR^2$,
		let $\gamma<0$,
		and
		$(\alpha,\beta,\delta)\in \RR^3$.
		Set $a=(\alpha,\beta)$,
		$b=(\gamma,\delta)$,
		$K=\RR_+\times \{0\}$,
		$(A,B)=(\N{a+K},b+\N{K})$,
		and  $\widetilde{Z}= \tmenge{x\in X}{0\in -v_R+Ax+B(x-v_D)}$.
		Then the following hold:
		\begin{enumerate}
    \setlength\itemsep{0.35em}
			\item
			\label{ex:not:eq:Z:i}
			$\dom A-\dom B=
			\RR\times \{\beta\}$.
			\item
			\label{ex:not:eq:Z:ii}
			$\ran A+\ran B
			=\left]-\infty,\gamma\right]\times \RR$.
			\item
			\label{ex:not:eq:Z:iii:0}
			$v_D=(0,\beta)$.
			\item
			\label{ex:not:eq:Z:iii}
			$v_R=(\gamma,0)\neq(0,0)$.
			\item
			\label{ex:not:eq:Z:iv}
			$v=(\gamma,\beta)$.
			\item
			\label{ex:not:eq:Z:v}
			$Z=\left[\max\{\gamma,\alpha\},+\infty\right[ \times \{\beta\}$.
			\item
			\label{ex:not:eq:Z:v:0}
			$\widetilde{Z}
			=\left[\max\{0,\alpha\},+\infty\right[ \times \{\beta\}$.
			\item
			\label{ex:not:eq:Z:vi}
			$\widetilde{Z}
			\subsetneqq Z \siff \alpha<0$.
		\end{enumerate}
	\end{example}
	\begin{proof}
		\cref{ex:not:eq:Z:i}
		Indeed,
		$\dom A-\dom B
		=a+(K-K)
		=a+\RR\times \{0\}
		=\RR\times \{\beta\}$.
		
		\cref{ex:not:eq:Z:ii}:
		It follows from \cite[Theorem~3.1]{Zara}
		that
		$\ran A+\ran B
		=(\rec K)^\ominus+b+(\rec K)^\ominus
		=b+ K^\ominus
		=b+\RR_{-}\times \RR$.
		
		\cref{ex:not:eq:Z:iii:0}:
		It follows from \cref{ex:not:eq:Z:i}
		and \cref{e:def:vD:vR}
		that
		$v_D
		=\Pj{ \RR\times\{\beta \}  }(0,0)=(0,\beta)$.
		\cref{ex:not:eq:Z:iii}:
		It follows from \cref{ex:not:eq:Z:ii} and
		the assumption that $\gamma<0$
		that
		$v_R=\Pj{\left]-\infty,\gamma\right]\times \RR}(0,0)=(\gamma,0)$.
		
		\cref{ex:not:eq:Z:iv}:
		Combine \cref{ex:not:eq:Z:iii:0}, \cref{ex:not:eq:Z:iii} and \cref{prop:v:decom}\cref{prop:v:decom:viii}.
		
		\cref{ex:not:eq:Z:v}:
		Indeed, let $x\in \RR^2$. Then \cref{ex:not:eq:Z:iv}
		and \cref{eq:shift:NC} applied with
		$C$ replaced by $K$ yield
		\begin{subequations}
			\label{eq:cone:ex}
			\begin{align}
				x\in Z & \siff (0,0)\in (-\gamma,-\beta)+\N{(\alpha,\beta)+K}x+(\gamma,\delta)
				+\N{K}(x-(\gamma,\beta))
				\\
				& \siff (0,0)\in (0,\delta-\beta)+\N{K}(x-(\alpha,\beta))
				+\N{K}(x-(\gamma,\beta)).
			\end{align}
		\end{subequations}
		Set $Y=\tmenge{(x_1,\beta)\in \RR^2}{x_1\ge\max\{\gamma,\alpha\}}$.
		On the one hand,
		in view of \cref{eq:cone:ex},
		we learn that
		$(\forall (x_1,x_2)\in Z)$ we must have
		$x_1\ge \max\{\gamma,\alpha\}$
		and $x_2=\beta$. Hence, $Z\subseteq Y$.
		On the other hand,
		$(\forall x=(x_1,x_2)\in Y)$
		we have
    \begin{subequations}
		\begin{align}
			(0,0)&\in (0,\delta-\beta)
			+\{0\}\times \RR+\{0\}\times \RR \\ 
      &\subseteq
			(0,\delta-\beta)+\N{K}(x-(\alpha,\beta))
			+\N{K}(x-(\gamma,\beta)).
		\end{align}
    \end{subequations}
		Hence \cref{eq:cone:ex} implies that $Y\subseteq Z$.
		Altogether, we conclude that
		\cref{ex:not:eq:Z:v} holds.
		
		\cref{ex:not:eq:Z:v:0}:
		Let $x\in \RR^2$. Then
		\cref{eq:shift:NC} applied with
		$C$ replaced by $K$, \cref{ex:not:eq:Z:iii}
		and \cref{ex:not:eq:Z:iii:0} yield
		\begin{subequations}
			\label{eq:cone:ex:tilde}
			\begin{align}
				x\in \widetilde{Z}
				& \siff (0,0)\in (-\gamma,0)+\N{(\alpha,\beta )+K}x+(\gamma,\delta)
				+\N{K}(x-(0,\beta))
				\\
				& \siff (0,0)\in (0,\delta )+\N{K}(x-(\alpha,\beta))
				+\N{K}(x-(0,\beta)).
			\end{align}
		\end{subequations}
		Set $\widetilde{Y}=\tmenge{(x_1,\beta)\in \RR^2}{x_1\ge\max\{0,\alpha\}}$.
		On the one hand, in view of \cref{eq:cone:ex:tilde}
		$(\forall (x_1,x_2)\in \widetilde{Z})$
		we must have
		$x_1\ge \max\{0,\alpha\}$
		and $x_2=\beta$.
		Hence $\widetilde{Z}\subseteq \widetilde{Y}$.
		On the other hand,
		$(\forall x=(x_1,x_2)\in \widetilde{Y})$ 
		we have
		\begin{equation}
			(0,0)\in (0,\delta)
			+\{0\}\times \RR+\{0\}\times \RR \subseteq
			(0,\delta)+\N{K}(x-(\alpha,\beta))
			+\N{K}(x-(0,\beta))
			.
		\end{equation}
		Hence \cref{eq:cone:ex:tilde} yields $\widetilde{Y}\subseteq \widetilde{Z}$.
		Altogether, we conclude that
		\cref{ex:not:eq:Z:v:0} holds.
		
		\cref{ex:not:eq:Z:vi}:
		Indeed,
		\cref{ex:not:eq:Z:v}
		and   \cref{ex:not:eq:Z:v:0} imply that
		$ \widetilde{Z}=Z$ $\siff$ $\max\{\gamma,\alpha\}=\max\{0,\alpha\}$
		$\siff \alpha\ge 0$.
	\end{proof}

	\begin{example}
		\label{ex:eq:Z}
		Let $U$ be a closed linear subspace of $X$
		and let $(a,b )\in U^\perp\times U$.
		Suppose that $(A,B)=(\N{a+U},b+\N{U})$ and set
		$\widetilde{Z}\coloneqq \tmenge{x\in X}{0\in -v_R+Ax+B(x-v_D)}$.
		Then the following hold:
		\begin{enumerate}
    \setlength\itemsep{0.35em}
			\item
			\label{ex:eq:Z:i}
			$\dom A-\dom B=a+U$.
			\item
			\label{ex:eq:Z:ii}
			$\ran A+\ran B=b+U^\perp$.
			\item
			\label{ex:eq:Z:iii}
			$(v_D,v_R)=(a,b)\in U^\perp\times U$.
			\item
			\label{ex:eq:Z:iv}
			$v=a+b$.
			\item
			\label{ex:eq:Z:v}
			$Z=a+U$.
			\item
			\label{ex:eq:Z:vi}
			$\widetilde{Z}= Z$.
		\end{enumerate}
	\end{example}
	\begin{proof}
		\cref{ex:eq:Z:i}\&\cref{ex:eq:Z:ii}:
		This is clear.
		
		\cref{ex:eq:Z:iii}:
		It follows from \cref{ex:not:eq:Z:i}, \cref{ex:eq:Z:ii}
		and, e.g.,
		\cite[Proposition~3.19~and~Corollary~3.24(iii)]{BC}
		that
		$v_D=\Pj{a+U}(0)=a-\Pj{U}a=\Pj{U^\perp }a=a$,
		and
		$v_R=\Pj{b+U^\perp}(0)=b-\Pj{U^\perp}b=\Pj{U }b=b$.
		
		\cref{ex:eq:Z:iv}:
		Combine \cref{ex:eq:Z:iii} and \cref{prop:v:decom}\cref{prop:v:decom:viii}.
		\cref{ex:eq:Z:v}:
		Indeed, let $x\in X$. Then \cref{ex:eq:Z:iv},
		\cref{eq:shift:NC},
		and the assumption that
		$(a,b)\in U^\perp\times U$ imply
		that
		\begin{subequations}
			\begin{align}
				x\in Z & \siff   0\in -a-b+\N{a+U}(x)+b+\N{U}(x-a-b)
				\\
				& \siff   a\in \N{U}(x-a)+\N{U}(x-a-b)
				\\
				& \siff a\in U^\perp , x-a\in U,\text{ and }x-a-b\in U
				\\
				& \siff {x\in a+U}.
			\end{align}
		\end{subequations}
		\cref{ex:eq:Z:vi}:
		Let $x\in Z=a+U$ by \cref{ex:eq:Z:v}.
		Then $-v_R+Ax+B(x-v_D)=-b+\N{a+U}(x)+b+\N{U}(x-a)
		=U^\perp+U^\perp=U^\perp\ni 0$.
		Hence, $Z\subseteq \widetilde{Z}$.
		The opposite inclusion follows from
		\cref{prop:Zvv:2shifts}\cref{prop:Zvv:2shifts:i}.
	\end{proof}

	\section{Dynamic consequences}

\label{sec:dynamic}

In this section, we analyze the shadow
sequence $(\J{A}T^nx)_\nnn$ with regards to 
Fej\'er monotonicity and conditions necessary for convergence.
	Recall that if $x\in X$,
	then
	\begin{equation}
		\label{e:T:asym:reg}
		\J{A}T^n x-\J{B}\R{A} T^n x
		=\J{A^{-1}}T^n x+\J{B^{-1}}\R{A} T^n x
		=T^n x-T^{n+1}x\to v,
	\end{equation}
	where the identities are consequences
	of \cref{e:Id-T:desc} and
	the limit follows from, e.g., \cite[Corollary~1.5]{Br-Reich77}.

	\begin{proposition}
		\label{prop:pd:shadows}
		Let $x\in X$.
		Then the following hold:
		\begin{enumerate}
			\setlength\itemsep{0.35em}
			\item
			\label{prop:pd:shadows:ii}
			$\norm{\J{A}T^{n}x-\J{A}T^{n+1}x}^2
			+\norm{\J{A^{-1}}T^{n}x-\J{A^{-1}}T^{n+1}x}^2
			\to \norm{v}^2=\norm{v_D}^2+\norm{v_R}^2$.
			\item
			\label{prop:pd:shadows:iii}
			$\J{A}T^{n}x-\J{A}T^{n+1}x\to v_R$.
			\item
			\label{prop:pd:shadows:iv}
			$\J{A^{-1}}T^{n}x-\J{A^{-1}}T^{n+1}x\to v_D$.
		\end{enumerate}
	\end{proposition}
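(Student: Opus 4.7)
My plan is to set $a_n \coloneqq \J{A}T^{n}x-\J{A}T^{n+1}x$ and $b_n \coloneqq \J{A^{-1}}T^{n}x-\J{A^{-1}}T^{n+1}x$, and to exploit three facts in concert: (a) the inverse resolvent identity \cref{e:iri} gives $a_n + b_n = T^n x - T^{n+1}x$, which by \cref{e:T:asym:reg} converges strongly to $v$; (b) applying \cref{lem:asymp:shadow} at the point $T^n x$ yields $a_n \in \ran A + \ran B$ and $b_n \in \dom A - \dom B$, whence by the very definition of $v_D, v_R$ and \cref{e:def:vD:vR} one has $\norm{a_n} \ge \norm{v_R}$ and $\norm{b_n} \ge \norm{v_D}$; and (c) the Minty parametrization \cref{Min:par} applied with $(y, u) = (T^n x, T^{n+1} x)$ shows that $(\J{A}T^n x, \J{A^{-1}}T^n x)$ and $(\J{A}T^{n+1}x, \J{A^{-1}}T^{n+1}x)$ are two points of $\gra A$, so the monotonicity of $A$ gives $\scal{a_n}{b_n} \ge 0$.

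Combining these, I would write
\begin{equation*}
\norm{a_n + b_n}^2 = \norm{a_n}^2 + \norm{b_n}^2 + 2\scal{a_n}{b_n} \ge \norm{a_n}^2 + \norm{b_n}^2 \ge \norm{v_R}^2 + \norm{v_D}^2 = \norm{v}^2,
\end{equation*}
where the final equality is \cref{prop:v:decom}\cref{prop:v:decom:ix}. Since the leftmost quantity tends to $\norm{v}^2$ by (a), the two inequalities must asymptotically saturate: one obtains $\norm{a_n}^2 + \norm{b_n}^2 \to \norm{v}^2$ (which is assertion \cref{prop:pd:shadows:ii}), and moreover, because each of $\norm{a_n}^2 \ge \norm{v_R}^2$ and $\norm{b_n}^2 \ge \norm{v_D}^2$ with their sum converging to $\norm{v_R}^2 + \norm{v_D}^2$, one concludes individually $\norm{a_n} \to \norm{v_R}$ and $\norm{b_n} \to \norm{v_D}$.

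To upgrade norm convergence to \cref{prop:pd:shadows:iii,prop:pd:shadows:iv}, I would use the standard Hilbert-space argument: the sequence $(a_n)$ is bounded; any weak cluster point $\overline{a}$ satisfies $\overline{a} \in \overline{\ran A + \ran B}$ (weakly closed, being convex and strongly closed by \cref{e:assump:free:fd}), and by weak lower semicontinuity of the norm $\norm{\overline{a}} \le \liminf \norm{a_n} = \norm{v_R}$. Since $v_R$ is by definition the \emph{unique} minimizer of the norm on that set, $\overline{a} = v_R$. Hence $a_n \weakly v_R$, and combined with $\norm{a_n} \to \norm{v_R}$ this gives $a_n \to v_R$ strongly, i.e.\ \cref{prop:pd:shadows:iii}. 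The argument for $b_n \to v_D$ is identical, using $b_n \in \dom A - \dom B$ and \cref{prop:v:decom}.

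The main obstacle, in my view, is assembling the orthogonality-type squeeze: one needs all three ingredients (the inverse resolvent identity, \cref{lem:asymp:shadow}, and the Minty monotonicity inner-product inequality) together with the nontrivial geometric identity $\norm{v}^2 = \norm{v_D}^2 + \norm{v_R}^2$ from \cref{prop:v:decom:ix} to produce the sandwich that forces simultaneous equality in the limit. Once that is in place, part (i) follows cleanly and parts (ii)–(iii) are the routine weak-cluster-point identification.
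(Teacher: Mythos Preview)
Your proof is correct, and for part \cref{prop:pd:shadows:ii} it is actually more self-contained than the paper's. The paper invokes an external result, \cite[Corollary~4.2]{BM:MPA17}, to obtain $\scal{a_n}{b_n}\to 0$, and then recovers $\norm{a_n}^2+\norm{b_n}^2\to\norm{v}^2$ via the parallelogram-type identity $\norm{a_n}^2+\norm{b_n}^2=\tfrac{1}{2}\big(\norm{T^nx-T^{n+1}x}^2+\norm{\R{A}T^nx-\R{A}T^{n+1}x}^2\big)$. You bypass that citation entirely: the monotonicity of $A$ via \cref{Min:par} gives $\scal{a_n}{b_n}\ge 0$ directly, and together with the lower bounds $\norm{a_n}\ge\norm{v_R}$, $\norm{b_n}\ge\norm{v_D}$ (from \cref{lem:asymp:shadow} and the definitions of $v_D,v_R$) and the identity $\norm{v}^2=\norm{v_D}^2+\norm{v_R}^2$ of \cref{prop:v:decom}\cref{prop:v:decom:ix}, the squeeze forces both part \cref{prop:pd:shadows:ii} and the individual norm convergences $\norm{a_n}\to\norm{v_R}$, $\norm{b_n}\to\norm{v_D}$ in one stroke.

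For parts \cref{prop:pd:shadows:iii} and \cref{prop:pd:shadows:iv}, the paper works in the product space and applies Pazy's lemma \cite[Lemma~2]{Pazy1970} to the sequence $(a_n,b_n)$ in $\overline{R}\times\overline{D}$ with $\norm{(a_n,b_n)}\to\norm{(v_R,v_D)}$. Your weak-cluster-point plus Radon--Riesz argument, done component-wise, is precisely the content of Pazy's lemma unpacked; this works because you have already established the separate norm convergences. One small remark: the convexity and closedness of $\overline{R}=\overline{\ran A+\ran B}$ do not rely on \cref{e:assump:free:fd}; they come straight from the fact that $\cran A$ and $\cran B$ are convex (noted after \cref{e:DandR}) and that a closure is closed by definition.
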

	
	\begin{proof}
		\cref{prop:pd:shadows:ii}:
		Indeed, we have
		\begin{subequations}
			\label{prop:pd:shadows:i}
			\begin{align}
				& \hspace{-2 cm}\norm{T^{n}x-T^{n+1}x}^2-\norm{\R{A}T^{n}x-\R{A}T^{n+1}x}^2
				\nonumber
				\\
				& =\norm{\J{A}T^n x-\J{A}T^{n+1} x}^2
				+\norm{\J{A^{-1}}T^n x-\J{A^{-1}}T^{n+1} x}^2 \\
				& \qquad +2\sscal{\J{A}T^n x-\J{A}T^{n+1} x}{\J{A^{-1}}T^n x-\J{A^{-1}}T^{n+1} x}
				\notag \\
				& \quad-\Big(\norm{\J{A}T^n x-\J{A}T^{n+1} x}^2
				+\norm{\J{A^{-1}}T^n x-\J{A^{-1}}T^{n+1} x}^2\nonumber                         \\
				& \qquad
				-2\sscal{\J{A}T^n x-\J{A}T^{n+1} x}{\J{A^{-1}}T^n x-\J{A^{-1}}T^{n+1} x}\Big)
				\nonumber \\
				& =4\sscal{\J{A}T^n x-\J{A}T^{n+1} x}{\J{A^{-1}}T^n x-\J{A^{-1}}T^{n+1} x}      \\
				& \to 0, \label{e:210620a} 
			\end{align}
		\end{subequations}
		where \cref{e:210620a} follows from \cite[Corollary~4.2]{BM:MPA17}.
		\cref{prop:pd:shadows:i} and \cref{e:T:asym:reg} imply that
		\begin{equation}
			\label{prop:pd:shadows:i:ii}
			\norm{\R{A}T^{n}x-\R{A}T^{n+1}x}
			\to \norm{v}.
		\end{equation}
		Next,
		\begin{subequations}
			\begin{align}
				& \hspace{-2 cm}\norm{\J{A}T^{n}x-\J{A}T^{n+1}x}^2
				+\norm{\J{A^{-1}}T^{n}x-\J{A^{-1}}T^{n+1}x}^2
				\nonumber
				\\
				& =\tfrac{1}{2}\Big(\norm{\J{A}T^{n}x-\J{A}T^{n+1}x}^2
				+\norm{\J{A}{A^{-1}}T^{n}x-\J{A^{-1}}T^{n+1}x}^2
				\\
				&
				\qquad
				+2\sscal{\J{A}T^n x-\J{A}T^{n+1} x}{
					\J{A^{-1}}T^n x-\J{A^{-1}}T^{n+1} x}
				\nonumber
				\\
				&
				\qquad +
				\norm{\J{A}T^{n}x-\J{A}T^{n+1}x}^2
				+\norm{\J{A^{-1}}T^{n}x-\J{A^{-1}}T^{n+1}x}^2
				\nonumber
				\\
				&
				\qquad
				-2\sscal{\J{A}T^n x-\J{A}T^{n+1} x}{\J{A^{-1}}T^n x-\J{A^{-1}}T^{n+1} x}
				\Big)
				\nonumber\\
				& =\tfrac{1}{2}\Big(\norm{\J{A}T^{n}x-\J{A}T^{n+1}x
					+\J{A^{-1}}T^{n}x-\J{A^{-1}}T^{n+1}x}^2
				\\
				&
				\qquad
				+\norm{\J{A}T^{n}x-\J{A}T^{n+1}x
					-(\J{A^{-1}}T^{n}x-\J{A^{-1}}T^{n+1}x)}^2\Big)
				\nonumber\\
				& =\tfrac{1}{2}\big(\norm{T^nx-T^{n+1}x}^2
				+\norm{\R{A}T^nx-\R{A}T^{n+1}x}^2\big)
				\\
				& \to \tfrac{1}{2}\big(\norm{v}^2+\norm{v}^2\big)=\norm{v}^2,
        \label{e:210620b}
			\end{align}
		\end{subequations}
    where \cref{e:210620b} follows from
		\cref{e:T:asym:reg}, \cref{prop:pd:shadows:i:ii}, and
		\cref{prop:v:decom}\cref{prop:v:decom:ix}.
		
		\cref{prop:pd:shadows:iii}\&\cref{prop:pd:shadows:iv}:
		In view of \cref{prop:v:decom}\cref{prop:v:decom:ix}, we
		rewrite \cref{prop:pd:shadows:ii}
		as
		\begin{equation}
			\norm{(\J{A}T^{n}x-\J{A}T^{n+1}x
				,\J{A^{-1}}T^{n}x-\J{A^{-1}}T^{n+1}x)}^2
			\to \norm{(v_R,v_D)}^2.
		\end{equation}
		\cref{lem:asymp:shadow}\cref{lem:asymp:shadow:i}\&\cref{lem:asymp:shadow:ii} implies that 
		the sequence $(\J{A}T^{n}x-\J{A}T^{n+1}x
		,\J{A^{-1}}T^{n}x-\J{A^{-1}}T^{n+1}x)_\nnn$
		lies in $(\ran A+\ran A)\times (\dom A-\dom B)
		\subseteq \overline{(\ran A+\ran A)}\times
		\overline{(\dom A-\dom B)} $.
		Using, e.g., \cite[Lemma~2]{Pazy1970}
		in view of \cref{e:proj:ps}
		we learn that
		$(\J{A}T^{n}x-\J{A}T^{n+1}x
		,\J{A^{-1}}T^{n}x-\J{A^{-1}}T^{n+1}x)\to (v_R,v_D)$;
		equivalently,
		$\J{A}T^{n}x-\J{A}T^{n+1}x\to v_R$
		and
		$\J{A^{-1}}T^{n}x-\J{A^{-1}}T^{n+1}x\to v_D$.
	\end{proof}

	Recall that a sequence $(y_n)_\nnn$ in $X$ is called 
	\emph{asymptotically regular} if $y_n-y_{n+1}\to 0$; 
  clearly, this is a condition necessary for convergence of $(y_n)_\nnn$. 
	
	\begin{proposition}[{\bf shadow convergence: necessary condition}]
		\label{prop:necessary:cond}
		Let $ x\in X$. Then the following hold:
		\begin{enumerate}
    \setlength\itemsep{0.35em}
			\item
			\label{prop:necessary:cond:i}
			$(\J{A}T^n x)_\nnn$ is asymptotically regular  $\siff$ $v_R=0$.
			\item
			\label{prop:necessary:cond:ii}
			$(\J{A^{-1}}T^n x)_\nnn$ is asymptotically regular  $\siff$ $v_D=0$.
			
		\end{enumerate}
	\end{proposition}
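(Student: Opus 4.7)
The proof is essentially a direct corollary of \cref{prop:pd:shadows}\cref{prop:pd:shadows:iii}\&\cref{prop:pd:shadows:iv}, together with the uniqueness of limits in the Hilbert space $X$.

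For \cref{prop:necessary:cond:i}, I would argue as follows. By \cref{prop:pd:shadows}\cref{prop:pd:shadows:iii}, we already know that
\begin{equation*}
\J{A}T^{n}x-\J{A}T^{n+1}x \to v_R.
\end{equation*}
If $(\J{A}T^n x)_\nnn$ is asymptotically regular, then by definition $\J{A}T^{n}x-\J{A}T^{n+1}x \to 0$, and uniqueness of limits forces $v_R = 0$. Conversely, if $v_R = 0$, then the same limit statement gives $\J{A}T^{n}x-\J{A}T^{n+1}x \to 0$, which is precisely asymptotic regularity of $(\J{A}T^n x)_\nnn$.

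For \cref{prop:necessary:cond:ii}, the argument is entirely parallel, using \cref{prop:pd:shadows}\cref{prop:pd:shadows:iv} in place of \cref{prop:pd:shadows:iii}: the limit $\J{A^{-1}}T^{n}x-\J{A^{-1}}T^{n+1}x \to v_D$ is equivalent, via uniqueness of limits, to the equivalence of $v_D=0$ with asymptotic regularity of $(\J{A^{-1}}T^n x)_\nnn$.

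There is no genuine obstacle here; the substantive work was already carried out in \cref{prop:pd:shadows}, where the individual limits $v_R$ and $v_D$ of the consecutive-difference sequences were identified through the orthogonal decomposition $\norm{v}^2 = \norm{v_D}^2 + \norm{v_R}^2$ combined with \cite[Lemma~2]{Pazy1970}. Given that identification, \cref{prop:necessary:cond} is simply the observation that the limit of $y_n - y_{n+1}$ must be $0$ for a convergent (or even asymptotically regular) sequence.
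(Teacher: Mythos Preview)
Your proof is correct and matches the paper's approach exactly: the paper's proof simply states that this is a direct consequence of \cref{prop:pd:shadows}\cref{prop:pd:shadows:iii}\&\cref{prop:pd:shadows:iv}, and you have spelled out precisely that deduction via uniqueness of limits.
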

	\begin{proof}
		\cref{prop:necessary:cond:i}\&\cref{prop:necessary:cond:ii}:
		This is a direct consequence of
		\cref{prop:pd:shadows}\cref{prop:pd:shadows:iii}\&\cref{prop:pd:shadows:iv}.
	\end{proof}

	\begin{proposition}
		\label{prop:pd:sh:bdd}
		Suppose that  $ f\in \fix(v+T)$,
		let $x\in X$, and let $\nnn$.
		Then the  following hold:
		\begin{enumerate}
    \setlength\itemsep{0.35em}
			\item
			\label{prop:pd:sh:bdd:i}
			$T^n f=f-nv$.
			\item
			\label{prop:pd:sh:bdd:ii}
			$\J{A}T^n f=\J{A}f-nv_R$.
			\item
			\label{prop:pd:sh:bdd:iii}
			$\J{A^{-1}}T^n f=\J{A^{-1}}f-nv_D$.
			\item
			\label{prop:pd:sh:bdd:ii:b}
			$\J{B}\R{A}T^n f=\J{B}\R{A}f-nv_R$.
			\item
			\label{prop:pd:sh:bdd:iii:b}
			$\J{B^{-1}}\R{A}T^n f=\J{B^{-1}}\R{A}f+nv_D$.
		\end{enumerate}
	\end{proposition}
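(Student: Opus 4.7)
My plan is to induct on $n \in \NN$, with base case $n = 0$ trivial for all five identities. The decisive preliminary step---and essentially the only non-routine ingredient---is to prove the \emph{forward-invariance} of $\fix(v+T)$ under $T$, namely that $f \in \fix(v+T)$ implies $Tf \in \fix(v+T)$. Once this is in hand, iterated application of \cref{lem:pd:sh:bdd:iii:c} along the orbit $(T^n f)_\nnn$ will do the rest.

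To establish forward-invariance I would combine firm nonexpansiveness of $T$ with the minimum-norm characterization of $v$ (see \cref{e:defv}). Firm nonexpansiveness yields
\begin{equation*}
\|Tf - T^2 f\|^2 \leq \scal{f - Tf}{Tf - T^2 f} = \scal{v}{Tf - T^2 f} \leq \|v\|\,\|Tf - T^2 f\|,
\end{equation*}
so $\|Tf - T^2 f\| \leq \|v\|$. On the other hand, $Tf - T^2 f \in \ran(\Id - T) \subseteq \cran(\Id - T)$, and since $v$ is the minimum-norm element of this closed convex set, we also have $\|v\| \leq \|Tf - T^2 f\|$. Hence every inequality above is an equality; equality in Cauchy--Schwarz, together with $\|Tf - T^2 f\| = \|v\|$, forces $Tf - T^2 f = v$, i.e., $Tf \in \fix(v+T)$.

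Iterating this invariance gives $T^n f \in \fix(v+T)$ for every $n \in \NN$. Applying \cref{lem:pd:sh:bdd:iii:c} (specifically \cref{e:Tf:f}, \cref{e:JA:Tf:f}, and \cref{e:JAinv:Tf:f}) with $f$ replaced by $T^n f$ produces the one-step relations $T^{n+1}f = T^n f - v$, $\J{A}T^{n+1}f = \J{A}T^n f - v_R$, and $\J{A^{-1}}T^{n+1}f = \J{A^{-1}}T^n f - v_D$; a routine telescoping induction then delivers \cref{prop:pd:sh:bdd:i}, \cref{prop:pd:sh:bdd:ii}, and \cref{prop:pd:sh:bdd:iii}. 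Finally, \cref{prop:pd:sh:bdd:ii:b} and \cref{prop:pd:sh:bdd:iii:b} need no further induction: since $(\Id - T)T^n f = v$ by invariance, the decomposition $\Id - T = \J{A} - \J{B}\R{A} = \J{A^{-1}} + \J{B^{-1}}\R{A}$ of \cref{e:Id-T:desc} yields $\J{B}\R{A}T^n f = \J{A}T^n f - v$ and $\J{B^{-1}}\R{A}T^n f = v - \J{A^{-1}}T^n f$; substituting the formulas already obtained and comparing with the same identities for $n = 0$ gives the claimed expressions.

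The main obstacle is the invariance claim $T(\fix(v+T)) \subseteq \fix(v+T)$; everything else is algebraic bookkeeping.
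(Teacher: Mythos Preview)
Your argument is correct. The paper takes a closely related but less modular route: it cites \cref{prop:pd:sh:bdd:i} from \cite{101} rather than proving it, and in the inductive step for \cref{prop:pd:sh:bdd:ii}\&\cref{prop:pd:sh:bdd:iii} it does not invoke \cref{lem:pd:sh:bdd:iii:c} as a black box but instead re-runs the projection argument (via \cref{lem:asymp:shadow} and \cref{prop:v:decom}\cref{prop:v:decom:ix}) directly at $T^nf$, using \cref{prop:pd:sh:bdd:i} only to secure $\|T^nf-T^{n+1}f\|=\|v\|$. Your approach factors this cleanly: the firm-nonexpansiveness argument gives a self-contained proof of the invariance $T(\fix(v+T))\subseteq\fix(v+T)$ (hence of \cref{prop:pd:sh:bdd:i}), after which \cref{lem:pd:sh:bdd:iii:c} applies verbatim along the entire orbit with no further work. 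For \cref{prop:pd:sh:bdd:ii:b} and \cref{prop:pd:sh:bdd:iii:b} the two proofs coincide. The net effect is the same; your version trades an external citation for a short direct argument and avoids repeating the content of \cref{lem:pd:sh:bdd:iii:c} inside the induction.
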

	
	\begin{proof}
		\cref{prop:pd:sh:bdd:i}:
		See \cite[Proposition~2.5(iv)]{101}.
		
		\cref{prop:pd:sh:bdd:ii}\&\cref{prop:pd:sh:bdd:iii}:
		We proceed by induction.
		When $n=0$ the statement is trivial and for
		$n=1$
		use  \cref{e:JA:Tf:f} and \cref{e:JAinv:Tf:f}.
		Now suppose that for some $n\ge 1$ it holds that
		$(\J{A}T^n f,\J{A^{-1}}T^n f)=(\J{A}f-nv_R,\J{A^{-1}}f-nv_D)$.
		\cref{lem:asymp:shadow}\cref{lem:asymp:shadow:i}\&\cref{lem:asymp:shadow:ii}
		applied with $x$ replaced by $T^nf$ yields that
		$\J{A}T^nf-\J{A}T^{n+1}f\in \ran A+\ran B$
		and
		$\J{A^{-1}}T^nf-\J{A^{-1}}T^{n+1}f\in \dom A-\dom B$.
		Now
		\begin{subequations}
			\begin{align}
				\norm{(v_R,v_D)}^2 & \le\norm{(\J{A}T^nf-\J{A}T^{n+1}f,\J{A^{-1}}T^nf-\J{A^{-1}}T^{n+1}f)}^2
				\\
				& =\norm{\J{A}T^nf-\J{A}T^{n+1}f}^2
				+\norm{\J{A^{-1}}T^nf-\J{A^{-1}}T^{n+1}f)}^2
				\\
				& \le \norm{\J{A}T^nf-\J{A}T^{n+1}f}^2
				+\norm{\J{A^{-1}}T^nf-\J{A^{-1}}T^{n+1}f)}^2
				\\
				& \qquad+2\sscal{\J{A}T^nf-\J{A}T^{n+1}f}{\J{A^{-1}}T^nf-\J{A^{-1}}T^{n+1}f}
				\nonumber\\
				& =\norm{T^nf-T^{n+1}f}^2
				=\norm{f-nv-(f-(n+1)v)}^2
				\\
				& = \norm{v}^2=\norm{(v_D,v_R)}^2.
			\end{align}
		\end{subequations}
		Thus, by definition of $v_D$ and $v_R$, 
		we have
		$(\J{A}T^nf-\J{A}T^{n+1}f,\J{A^{-1}}T^nf-\J{A^{-1}}T^{n+1}f)=(v_R,v_D)$.
		Recalling the inductive hypothesis,
		we learn that
    \begin{subequations}
		\begin{align}
			(\J{A}T^{n+1}f,\J{A^{-1}}T^{n+1}f)
			& =(\J{A}T^nf,\J{A^{-1}}T^nf)-(v_R,v_D)    \\
			& =(\J{A}f-(n+1)v_R,\J{A^{-1}}f-(n+1)v_D).
		\end{align}
    \end{subequations}

		\cref{prop:pd:sh:bdd:ii:b}:
		Using \cref{prop:pd:sh:bdd:i}
		we have $v= f-T f=(\Id -T) f=\J{A} f-\J{B}\R{A} f$,
		hence $\J{A}f=v+\J{B}\R{A}f$.
		Let $\nnn$. \cref{prop:pd:sh:bdd:i}
		and \cref{prop:pd:sh:bdd:ii}
		imply that
		$v=T^n f-T^{n+1}f
		=(\Id -T)T^n f
		=\J{A}T^n f-\J{B}\R{A} T^nf$.
		Hence,
		$\J{B}\R{A} T^nf
		=-v+\J{A}T^n f
		=-v+\J{A}f-nv_R
		=-v+v+\J{B}\R{A}f-nv_R
		=\J{B}\R{A}f-nv_R$.
		
		\cref{prop:pd:sh:bdd:iii:b}:
		Using \cref{prop:pd:sh:bdd:i}
		we have $v= f-T f=(\Id -T) f=\J{A^{-1}} f+\J{B^{-1}}\R{A} f$,
		hence $\J{A^{-1}}f=v-\J{B^{-1}}\R{A}f$.
		Let $\nnn$. \cref{prop:pd:sh:bdd:i}
		and \cref{prop:pd:sh:bdd:iii}
		imply that
		$v=T^n f-T^{n+1}f
		=(\Id -T)T^n f
		=\J{A^{-1}}T^n f+\J{B^{-1}}\R{A} T^nf$.
		Hence,
		$\J{B^{-1}}\R{A} T^nf
		=v-\J{A^{-1}}T^n f
		=v-(\J{A^{-1}}f-nv_D)
		=v-v+\J{B^{-1}}\R{A}f+nv_D
		=\J{B^{-1}}\R{A}f+nv_D$.
	\end{proof}

	We omit the simple proof of the following result.
	\begin{lemma}
		\label{lem:Fejer:shift}
		Let $s\in X$
		and let $C$ be a nonempty subset of $X$.
		Suppose that $(x_n)_\nnn$ is Fej\'er
		monotone with respect to $C$.
		Then $(s+x_n)_\nnn$ is Fej\'er
		monotone with respect to $s+C$.
	\end{lemma}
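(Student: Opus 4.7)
The plan is to verify the Fej\'er monotonicity of $(s+x_n)_\nnn$ with respect to $s+C$ directly from the definition, by making the canonical change of variable $c'=s+c$ with $c\in C$. Since translation by the fixed vector $s$ is an isometry, the distances match up exactly, and the inequality characterizing Fej\'er monotonicity transfers without loss.

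More concretely, fix an arbitrary point $c'\in s+C$. By definition of $s+C$, there exists $c\in C$ with $c'=s+c$. Then for every $\nnn$,
\begin{equation}
\norm{(s+x_{n+1})-c'}=\norm{x_{n+1}-c}\le \norm{x_n-c}=\norm{(s+x_n)-c'},
\end{equation}
where the middle inequality is the assumed Fej\'er monotonicity of $(x_n)_\nnn$ with respect to $C$ applied at the point $c$. Since $c'\in s+C$ was arbitrary, the sequence $(s+x_n)_\nnn$ is Fej\'er monotone with respect to $s+C$, as claimed. There is no real obstacle here; the only thing to notice is that the translation simultaneously shifts both the iterates and the reference set, so the discrepancy cancels exactly.
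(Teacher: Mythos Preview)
Your proof is correct and is exactly the straightforward verification the paper has in mind; indeed, the paper omits the proof entirely as simple, and your argument is the canonical one-line check from the definition.
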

  
  We are now ready for the main result in this section.
  
	\begin{theorem}
		\label{prop:pd:sh:bdd:fejer}
		Suppose that  $ f\in \fix(v+T)$,
		let $x\in X$, and let $\nnn$.
		Then the  following hold:
		\begin{enumerate}
			\setlength\itemsep{0.35em}
			\item
			\label{prop:pd:sh:bdd:fejer:vi}
			The sequence $(\J{A}T^n x+nv_R,\J{A^{-1}}T^nx+nv_D )_\nnn$
			is Fej\'er monotone with respect to the set
			$\menge{(\J{A}\times \J{A^{-1}})(f,f)}{f\in \fix (v+T)}$.
			\item
			\label{prop:pd:sh:bdd:fejer:vii}
			The sequence $(\J{A}T^n x+nv_R,\J{A^{-1}}T^nx+nv_D )_\nnn$
			is bounded.
			\item
			\label{prop:pd:sh:bdd:fejer:vii:b}
			The sequence $(\J{B}\R{A}T^n x+nv_R,\J{B^{-1}}\R{A}T^nx-nv_D )_\nnn$
			is bounded.
			\item
			\label{prop:pd:sh:bdd:fejer:ix:x}
			The sequence $((0,-v)+(\J{A}T^n x+nv_R,\J{A^{-1}}T^nx+nv_D ))_\nnn$
			is Fej\'er monotone with respect to the set
			$\SE$.

			\item
			\label{prop:pd:sh:bdd:fejer:viii}
			The sequence $(\J{A}T^n x )_\nnn$
			is bounded $\siff$ $v_R=0$.
			\item
			\label{prop:pd:sh:bdd:fejer:xi}
			$v_R\neq 0$ $\RA$ $\norm{\J{A}T^n x }\to +\infty$.
			
			\item
			\label{prop:pd:sh:bdd:fejer:ix}
			The sequence $(\J{A^{-1}}T^nx )_\nnn$
			is bounded $\siff$ $v_D=0$.
			
			\item
			\label{prop:pd:sh:bdd:fejer:xii}
			$v_D\neq 0$ $\RA$ $\norm{\J{A^{-1}}T^n x }\to +\infty$.
			\item
			\label{prop:pd:sh:bdd:fejer:x}
			Suppose that $A$ and $B$ are paramonotone.
			Then
			the sequence $((0,-v)+(\J{A}T^n x+nv_R,\J{A^{-1}}T^nx+nv_D ))_\nnn$
			is Fej\'er monotone with respect to
			$Z\times K$.
		\end{enumerate}
	\end{theorem}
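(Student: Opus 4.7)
The plan is to establish item \cref{prop:pd:sh:bdd:fejer:vi} first as the backbone, and then derive the remaining items from it using the identifications provided by \cref{prop:pd:sh:bdd}, \cref{lem:bad:walaa}, \cref{lem:Fejer:shift}, and \cref{fact:para:cc}.

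For \cref{prop:pd:sh:bdd:fejer:vi}, the starting observation is that, for $f\in \fix(v+T)$, \cref{prop:pd:sh:bdd}\cref{prop:pd:sh:bdd:ii}\&\cref{prop:pd:sh:bdd:iii} give $\J{A}T^n f=\J{A}f-nv_R$ and $\J{A^{-1}}T^n f=\J{A^{-1}}f-nv_D$, so that the squared distance in $X\times X$ from $(\J{A}T^n x+nv_R,\J{A^{-1}}T^n x+nv_D)$ to $(\J{A}f,\J{A^{-1}}f)$ collapses to $\norm{\J{A}T^n x-\J{A}T^n f}^2+\norm{\J{A^{-1}}T^n x-\J{A^{-1}}T^n f}^2$. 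Via the parallelogram identity together with \cref{e:iri}, this quantity rewrites as $\tfrac12(\norm{T^n x-T^n f}^2+\norm{\R{A}T^n x-\R{A}T^n f}^2)$, so the Fej\'er inequality is equivalent to
\[
\norm{Tu-Tw}^2+\norm{\R{A}Tu-\R{A}Tw}^2\le \norm{u-w}^2+\norm{\R{A}u-\R{A}w}^2
\]
applied to $u=T^n x$ and $w=T^n f$. Establishing this product-space estimate by combining the firm nonexpansiveness of $T$ with the Fej\'er monotonicity machinery introduced in \cite{BM:MPA17} is the principal technical obstacle.

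With \cref{prop:pd:sh:bdd:fejer:vi} in hand, \cref{prop:pd:sh:bdd:fejer:vii} follows because Fej\'er monotonicity with respect to a nonempty set forces boundedness. For \cref{prop:pd:sh:bdd:fejer:vii:b}, I use \cref{prop:pd:sh:bdd}\cref{prop:pd:sh:bdd:ii:b}\&\cref{prop:pd:sh:bdd:iii:b} to write $\J{B}\R{A}T^n x+nv_R=(\J{B}\R{A}T^n x-\J{B}\R{A}T^n f)+\J{B}\R{A}f$ (and similarly for the $\J{B^{-1}}\R{A}$ component); the difference terms are bounded by $\norm{x-f}$ via firm nonexpansiveness of $\J{B}$ and nonexpansiveness of $\R{A}$ and $T$. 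Item \cref{prop:pd:sh:bdd:fejer:ix:x} is then obtained by applying \cref{lem:Fejer:shift} with shift $(0,-v)$ to \cref{prop:pd:sh:bdd:fejer:vi}, since \cref{lem:bad:walaa}\cref{lem:bad:walaa:iii} identifies the resulting reference set as $\SE$.

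The remaining items rest on a reverse-triangle estimate: \cref{prop:pd:sh:bdd:fejer:vii} gives $M\ge 0$ with $\norm{\J{A}T^n x+nv_R}\le M$, so $\norm{\J{A}T^n x}\ge n\norm{v_R}-M\to +\infty$ whenever $v_R\neq 0$, yielding \cref{prop:pd:sh:bdd:fejer:xi} and the nontrivial direction of \cref{prop:pd:sh:bdd:fejer:viii}; the converse of the latter is immediate from \cref{prop:pd:sh:bdd:fejer:vii}. The dual statements \cref{prop:pd:sh:bdd:fejer:ix} and \cref{prop:pd:sh:bdd:fejer:xii} for $\J{A^{-1}}T^n x$ follow analogously. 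Finally, \cref{prop:pd:sh:bdd:fejer:x} is immediate from \cref{prop:pd:sh:bdd:fejer:ix:x} combined with \cref{fact:para:cc}\cref{fact:para:ii:b}, which yields $\SE=Z\times K$ under paramonotonicity.
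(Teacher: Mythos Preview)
Your proposal is correct and follows essentially the same route as the paper: your parallelogram-identity reformulation of \cref{prop:pd:sh:bdd:fejer:vi} reduces to exactly the two-term inequality the paper quotes as \cite[Theorem~2.7(v)]{BM:MPA17}, and the remaining items are handled identically. The only cosmetic difference is \cref{prop:pd:sh:bdd:fejer:vii:b}, where the paper combines \cref{prop:pd:sh:bdd:fejer:vii} with the convergence $T^n x-T^{n+1}x\to v$ from \cref{e:T:asym:reg} rather than invoking nonexpansiveness against the reference orbit $T^n f$; both arguments are one line.
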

	
	\begin{proof}
		\cref{prop:pd:sh:bdd:fejer:vi}\&\cref{prop:pd:sh:bdd:fejer:vii}:
		Indeed, it follows from
		\cite[Theorem~2.7(v)]{BM:MPA17}
		that $(\forall (x,y)\in X\times X)$
		\begin{align}
			&\hspace{-2cm} \norm{\J{A} T^{n+1} x-\J{A} T^{n+1} y}^2+
			\norm{\J{A^{-1}} T^{n+1} x-\J{A^{-1}} T^{n+1} y}^2
      \nonumber\\
			&\le   \norm{\J{A} T^{n} x-\J{A} T^{n} y}^2+
			\norm{\J{A}{A^{-1}} T^{n} x-\J{A^{-1}} T^{n} y}^2.
		\end{align}
		Applying the above equation with $y$ replaced by
		$f\in \fix (v+T)$
    and recalling
		\cref{prop:pd:sh:bdd}\cref{prop:pd:sh:bdd:ii}\&\cref{prop:pd:sh:bdd:iii}
		yields
		\begin{align}
			& \hspace{-2cm} \norm{\J{A} T^{n+1} x-(\J{A}  f-(n+1)v_R)}^2+
			\norm{\J{A^{-1}} T^{n+1} x-(\J{A^{-1}} f-(n+1)v_D)}^2
			\nonumber
			\\
			& \le   \norm{\J{A} T^{n} x-(\J{A}f-nv_R)}^2+
			\norm{\J{A^{-1}} T^{n} x-(\J{A^{-1}} f-nv_D)}^2.
		\end{align}
		Rearranging yields
		\begin{align}
			& \hspace{-2cm}
			\norm{\J{A} T^{n+1} x+(n+1)v_R-\J{A}  f}^2+
			\norm{\J{A^{-1}} T^{n+1 } x+(n+1)v_D-\J{A^{-1}}f}^2
			\nonumber
			\\
			& \le   \norm{\J{A} T^{n} x+nv_R-\J{A}f}^2+
			\norm{\J{A^{-1}} T^{n} x+nv_D-\J{A^{-1}} f}^2,
		\end{align}
		and the conclusion follows.

		\cref{prop:pd:sh:bdd:fejer:vii:b}:
		Combine \cref{prop:pd:sh:bdd:fejer:vii}
		and \cref{e:T:asym:reg}.
		
		\cref{prop:pd:sh:bdd:fejer:ix:x}:
		Combine \cref{prop:pd:sh:bdd:fejer:vi},
		\cref{lem:bad:walaa}\cref{lem:bad:walaa:iii}
		and \cref{lem:Fejer:shift} applied with
		$s$ replaced by $(0,-v)$
		and
		$C$ replaced by
		$\tmenge{(\J{A}\times \J{A^{-1}})(f,f)}{f\in \fix (v+T)} $.
		
		\cref{prop:pd:sh:bdd:fejer:viii}\&\cref{prop:pd:sh:bdd:fejer:xi}:
		``$\RA$": Indeed, observe that
		$\norm{\J{A}T^n x}\ge n\norm{v_R}-\norm{\J{A}T^n x+nv_R}$.
		``$\LA$": This is a direct consequence of
		\cref{prop:pd:sh:bdd:fejer:vii} applied with $v_R=0$.
		
		\cref{prop:pd:sh:bdd:fejer:ix}\&\cref{prop:pd:sh:bdd:fejer:xii}:
		``$\RA$": Indeed, observe that
		$\norm{\J{A^{-1}}T^n x}\ge n\norm{v_D}-\norm{\J{A^{-1}}T^n x+nv_D}$.
		``$\LA$": This is a direct consequence of
		\cref{prop:pd:sh:bdd:fejer:vii} applied with $v_D=0$.
    
		\cref{prop:pd:sh:bdd:fejer:x}:
		Combine
		\cref{prop:pd:sh:bdd:fejer:ix:x}
		and \cref{fact:para:cc}\cref{fact:para:ii:b}.
	\end{proof}

	\begin{remark}[{\bf unbounded shadows}]
		\label{prop:necessary:cond:iv}
		In view of \cref{prop:pd:sh:bdd:fejer}\cref{prop:pd:sh:bdd:fejer:vii}
		and
		\cref{prop:v:decom}\cref{prop:v:decom:ix},
		we learn that if $\fix T=\fady$ (in particular, if $v\neq 0$),
		then  at least one of the sequences
		$(\J{A}T^n x)_\nnn$, $(\J{A^{-1}}T^n x)_\nnn$
		is unbounded.
	\end{remark}

	\begin{corollary}[{\bf boundedness of the primal shadows}]
		\label{cor:p:sh:bdd}
		Suppose that $v_R=0$.  Suppose that  $ f\in \fix(v+T)$,
		and let $x\in X$.
		Then the  following hold:
		\begin{enumerate}
			\setlength\itemsep{0.35em}
			\item
			\label{cor:p:sh:bdd:i}
			$(\forall \nnn)$ $\J{A}T^n f=\J{A}(f-nv_D)=\J{A}f$.
			\item
			\label{cor:p:sh:bdd:ii}
			$(\J{A} T^n x)_\nnn $ is bounded.
			\item
			\label{cor:p:sh:bdd:iii}
			$(\J{B}\R{A} T^n x)_\nnn $ is bounded.
		\end{enumerate}
	\end{corollary}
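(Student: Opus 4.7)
The plan is to derive each item as a direct consequence of results already established in the paper.

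For \cref{cor:p:sh:bdd:i}, I would apply \cref{prop:pd:sh:bdd}\cref{prop:pd:sh:bdd:i} and \cref{prop:pd:sh:bdd:ii} to the fixed point $f \in \fix(v+T)$, which give $T^n f = f - nv$ and $\J{A}T^n f = \J{A}f - nv_R$ respectively. The assumption $v_R = 0$ immediately simplifies the second identity to $\J{A}T^n f = \J{A}f$. For the middle expression, I would invoke \cref{prop:v:decom}\cref{prop:v:decom:viii}, namely $v = v_D + v_R$, which under $v_R=0$ yields $v = v_D$, so $T^n f = f - nv_D$, and applying $\J{A}$ gives $\J{A}(f - nv_D) = \J{A}T^n f = \J{A}f$.

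For \cref{cor:p:sh:bdd:ii}, I would simply invoke the ``$\Leftarrow$'' direction of \cref{prop:pd:sh:bdd:fejer}\cref{prop:pd:sh:bdd:fejer:viii}: since $v_R = 0$, the sequence $(\J{A}T^n x)_\nnn$ is bounded. (Note this implicitly uses that $\fix(v+T)\neq\emp$ since we assumed such an $f$ exists, so the Fej\'er-based boundedness from \cref{prop:pd:sh:bdd:fejer}\cref{prop:pd:sh:bdd:fejer:vii} applies.)

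For \cref{cor:p:sh:bdd:iii}, I would combine \cref{cor:p:sh:bdd:ii} with the asymptotic regularity identity \cref{e:T:asym:reg}, which reads $\J{A}T^n x - \J{B}\R{A}T^n x = T^nx - T^{n+1}x \to v$. Since the right side converges (hence is bounded) and $(\J{A}T^n x)_\nnn$ is bounded by \cref{cor:p:sh:bdd:ii}, it follows by the triangle inequality that $(\J{B}\R{A}T^n x)_\nnn$ is also bounded.

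No step here is a genuine obstacle: each item is a direct assembly of earlier machinery, with the main conceptual content (Fej\'er monotonicity of the shifted shadow sequence and the orthogonal decomposition $v = v_D + v_R$) already established in \cref{prop:pd:sh:bdd:fejer} and \cref{prop:v:decom}. The only mild care needed is in \cref{cor:p:sh:bdd:i} to note that both expressions $\J{A}(f - nv_D)$ and $\J{A}f$ coincide with $\J{A}T^n f$ via two different routes (the $T$-orbit formula and the $v_R=0$ reduction).
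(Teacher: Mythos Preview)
Your proposal is correct and essentially identical to the paper's own proof. The only cosmetic difference is in \cref{cor:p:sh:bdd:ii}: you invoke \cref{prop:pd:sh:bdd:fejer}\cref{prop:pd:sh:bdd:fejer:viii} whereas the paper cites \cref{prop:pd:sh:bdd:fejer}\cref{prop:pd:sh:bdd:fejer:vii} with $v_R=0$ directly, but since the ``$\Leftarrow$'' direction of \cref{prop:pd:sh:bdd:fejer:viii} is itself proved by that exact citation, this is not a genuine difference.
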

	
	\begin{proof}
		\cref{cor:p:sh:bdd:i}:
		Combine
		\cref{prop:v:decom}\cref{prop:v:decom:viii} and
		\cref{prop:pd:sh:bdd}\cref{prop:pd:sh:bdd:i}\&\cref{prop:pd:sh:bdd:ii} with $v_R=0$.
		\cref{cor:p:sh:bdd:ii}:
		Apply \cref{prop:pd:sh:bdd:fejer}\cref{prop:pd:sh:bdd:fejer:vii} with $v_R=0$.
		\cref{cor:p:sh:bdd:iii}:
		Combine \cref{cor:p:sh:bdd:ii}
		and \cref{e:T:asym:reg}.
	\end{proof}
	\begin{corollary}[{\bf boundedness of the dual shadows}]
		\label{cor:d:sh:bdd}
		Suppose that $v_D=0$.  Suppose that  $ f\in \fix(v+T)$,
		and let $x\in X$.
		Then the  following hold:
		\begin{enumerate}
			\setlength\itemsep{0.35em}
			\item
			\label{cor:d:sh:bdd:i}
			$(\forall \nnn)$ $\J{A^{-1}}T^n f=\J{A^{-1}}(f-nv_R)=\J{A^{-1}}f$.
			\item
			\label{cor:d:sh:bdd:ii}
			$(\J{A^{-1}} T^n x)_\nnn $ is bounded.
			\item
			\label{cor:d:sh:bdd:iii}
			$(\J{B^{-1}}\R{A} T^n x)_\nnn $ is bounded.
		\end{enumerate}
	\end{corollary}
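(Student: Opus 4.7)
The proof will be completely parallel to that of \cref{cor:p:sh:bdd}, with the roles of primal and dual shadows interchanged; the symmetry here is between $v_D$ and $v_R$, and between $\J{A}$ and $\J{A^{-1}}$ (via the inverse resolvent identity \cref{e:iri}). Accordingly, I expect no substantive obstacle: every ingredient needed already appears in the preceding machinery, and the plan is simply to assemble them in the analogous order.

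For \cref{cor:d:sh:bdd:i}, I would first invoke \cref{prop:v:decom}\cref{prop:v:decom:viii} to observe that under the hypothesis $v_D=0$ we have $v=v_R$; hence $f-nv_R=f-nv$. Combining this with \cref{prop:pd:sh:bdd}\cref{prop:pd:sh:bdd:i} (which gives $T^nf=f-nv$) yields $\J{A^{-1}}T^nf=\J{A^{-1}}(f-nv_R)$. The second equality then follows immediately from \cref{prop:pd:sh:bdd}\cref{prop:pd:sh:bdd:iii}, since that result states $\J{A^{-1}}T^nf=\J{A^{-1}}f-nv_D$, and under $v_D=0$ this collapses to $\J{A^{-1}}f$.

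For \cref{cor:d:sh:bdd:ii}, I would apply \cref{prop:pd:sh:bdd:fejer}\cref{prop:pd:sh:bdd:fejer:vii}, which asserts that the sequence $(\J{A}T^nx+nv_R,\J{A^{-1}}T^nx+nv_D)_\nnn$ is bounded. Setting $v_D=0$ reduces the second coordinate to $\J{A^{-1}}T^nx$, and boundedness of the product sequence forces boundedness of $(\J{A^{-1}}T^nx)_\nnn$. (Alternatively, \cref{prop:pd:sh:bdd:fejer}\cref{prop:pd:sh:bdd:fejer:ix} gives the equivalence directly.)

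Finally, for \cref{cor:d:sh:bdd:iii}, I would combine \cref{cor:d:sh:bdd:ii} with the asymptotic regularity identity \cref{e:T:asym:reg}, which asserts in particular that $\J{A^{-1}}T^nx+\J{B^{-1}}\R{A}T^nx\to v$. Since the sum converges and the first summand is bounded by \cref{cor:d:sh:bdd:ii}, the second summand $(\J{B^{-1}}\R{A}T^nx)_\nnn$ is also bounded, completing the proof.
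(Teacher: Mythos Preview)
Your proposal is correct and follows essentially the same approach as the paper: for \cref{cor:d:sh:bdd:i} the paper combines \cref{prop:v:decom}\cref{prop:v:decom:viii} with \cref{prop:pd:sh:bdd}\cref{prop:pd:sh:bdd:i}\&\cref{prop:pd:sh:bdd:iii} under $v_D=0$; for \cref{cor:d:sh:bdd:ii} it applies \cref{prop:pd:sh:bdd:fejer}\cref{prop:pd:sh:bdd:fejer:vii} with $v_D=0$; and for \cref{cor:d:sh:bdd:iii} it combines \cref{cor:d:sh:bdd:ii} with \cref{e:T:asym:reg}. Your write-up is slightly more detailed but invokes exactly the same ingredients in the same order.
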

	
	\begin{proof}
		\cref{cor:p:sh:bdd:i}:
		Combine
		\cref{prop:v:decom}\cref{prop:v:decom:viii} and
		\cref{prop:pd:sh:bdd}\cref{prop:pd:sh:bdd:i}\&\cref{prop:pd:sh:bdd:iii} with $v_D=0$.
		\cref{cor:p:sh:bdd:ii}:
		Apply \cref{prop:pd:sh:bdd:fejer}\cref{prop:pd:sh:bdd:fejer:vii} with $v_D=0$.
		\cref{cor:d:sh:bdd:iii}:
		Combine \cref{cor:d:sh:bdd:ii}
		and \cref{e:T:asym:reg}.
	\end{proof}
	
	\begin{corollary}
		\label{cor:Pazy:split}
		Let $x\in X$. Then the following hold:
		\begin{enumerate}
    \setlength\itemsep{0.35em}
			\item
			\label{cor:Pazy:split:i}
			$\frac{1}{n}(\J{A}T^n x)\to -v_R$.
			\item
			\label{cor:Pazy:split:ii}
			$\frac{1}{n}(\J{A^{-1}}T^n x)\to -v_D$.
		\end{enumerate}
	\end{corollary}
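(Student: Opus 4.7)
The plan is to leverage the boundedness result from \cref{prop:pd:sh:bdd:fejer}\cref{prop:pd:sh:bdd:fejer:vii}, which asserts that the sequence $(\J{A}T^n x + nv_R,\J{A^{-1}}T^n x + nv_D)_\nnn$ is bounded in $X\times X$. This is the workhorse: once we know these shifted sequences are bounded, dividing by $n$ kills them in the limit, leaving precisely the identifications we want.

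For \cref{cor:Pazy:split:i}, I would let $f\in\fix(v+T)$ (which exists because \cref{prop:pd:sh:bdd:fejer} is applied under this hypothesis; note that throughout this paper we are in the regime where $\fix(v+T)\neq\fady$, cf.\ \cref{e:Z:Fix} and the hypothesis of \cref{prop:pd:sh:bdd:fejer}) and set $M\coloneqq \sup_{\nnn}\norm{\J{A}T^n x+nv_R-\J{A}f}<+\infty$ by Fej\'er monotonicity. Then
\begin{equation}
\Norm{\tfrac{1}{n}\J{A}T^n x+v_R} \le \tfrac{1}{n}\norm{\J{A}T^n x+nv_R-\J{A}f}+\tfrac{1}{n}\norm{\J{A}f}\le \tfrac{M+\norm{\J{A}f}}{n}\to 0,
\end{equation}
which gives $\tfrac{1}{n}\J{A}T^n x\to -v_R$ as desired. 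The argument for \cref{cor:Pazy:split:ii} is entirely parallel, using the second coordinate of the same Fej\'er-bounded sequence to obtain $\tfrac{1}{n}\J{A^{-1}}T^n x\to -v_D$.

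I don't anticipate a genuine obstacle here: the work has already been done in \cref{prop:pd:sh:bdd:fejer}, and this corollary simply repackages the linear growth of the shadow sequences as a Ces\`aro-type limit. The only subtlety is the standing assumption that $\fix(v+T)\neq\fady$ so that \cref{prop:pd:sh:bdd:fejer} applies; in the current section this is in force (compare \cref{e:Z:Fix} together with the formulation of \cref{prop:pd:sh:bdd:fejer}). As an alternative, one could derive the same conclusion purely from \cref{prop:pd:shadows}\cref{prop:pd:shadows:iii}\&\cref{prop:pd:shadows:iv} via a telescoping/Ces\`aro mean argument: since $\J{A}T^{n+1}x-\J{A}T^n x\to -v_R$, the Ces\`aro means $\tfrac{1}{n}(\J{A}T^n x-\J{A}x)=\tfrac{1}{n}\sum_{k=0}^{n-1}(\J{A}T^{k+1}x-\J{A}T^k x)$ converge to $-v_R$, and dividing $\J{A}x$ by $n$ contributes nothing in the limit. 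This avoids invoking the existence of a fixed point of $v+T$ altogether and is probably the more natural route.
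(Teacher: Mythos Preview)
Your primary argument---invoke the boundedness of $(\J{A}T^n x+nv_R,\J{A^{-1}}T^n x+nv_D)_\nnn$ from \cref{prop:pd:sh:bdd:fejer}\cref{prop:pd:sh:bdd:fejer:vii} and divide by $n$---is exactly the paper's proof. Your Ces\`aro alternative via \cref{prop:pd:shadows}\cref{prop:pd:shadows:iii}\&\cref{prop:pd:shadows:iv} is a nice bonus, since it sidesteps the hypothesis $\fix(v+T)\neq\fady$ that \cref{prop:pd:sh:bdd:fejer} carries but the corollary itself does not explicitly state.
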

	\begin{proof}
		This is a direct consequence
		of \cref{prop:pd:sh:bdd:fejer}\cref{prop:pd:sh:bdd:fejer:vii}.
	\end{proof}

	\section{Shadow convergence!}
  
  \label{sec:yay}

	In this section, we shall prove the main result announced in 
  \cref{sec:intromain}. We assume throughout that
	\begin{empheq}[box=\mybluebox]{equation*}
		\label{e:f:g}
		\text{$f\in \Gamma_0(X)$\;
			and\; $g\in \Gamma_0(X)$,}
	\end{empheq}
	that $(A,B)=(\partial f,\partial g)$,
	and 
	(see \cref{prop:v:decom}\cref{prop:v:decom:viii}) that 
	\begin{empheq}[box=\mybluebox]{equation}
		\label{e:v=vd}
		v_R=0\quad \Leftrightarrow \quad v=\Pj{\cran  (\Id-T)}(0)=\Pj{\overline{\dom f-\dom g}}(0)=v_D.
	\end{empheq}
We remind the reader on our abbreviations
	\begin{empheq}[box=\mybluebox]{equation*}
		\label{e:f:g:prox}
		\big(\prox_f,\prox_{f^*},\prox_g, \refl_f\big)
		=\big(\operatorname{Prox}_f, \operatorname{Prox}_{f^*},\operatorname{Prox}_g, 2\operatorname{Prox}_f-\Id\big).
	\end{empheq}
Then 
	\begin{equation}
		\label{e:def:T:fg}
		T= T_{(\partial f,\partial g)} = \Id-\prox_f+\prox_g R_f.
	\end{equation}

	\begin{remark}
		Let $x\in X$. In view of \cref{prop:necessary:cond}\cref{prop:necessary:cond:i}, 
		applied with $(A,B)$ replaced by $(\partial f,\partial g)$,
		we learn that the assumption $v_R=0$ is \emph{necessary} for the convergence of the
		shadow sequence $(\prox_fT^n x)_\nnn$.
	\end{remark}

	Because
	$v=v_D$, the definition of
	$v_D$ implies 
	\begin{equation}
		\label{e:v:loc}
(\forall (a,b)\in \cdom f\times \cdom g) \quad 
		\scal{v}{v-(a-b)}\le 0.
	\end{equation}
The following result provides several descriptions of $Z$ in the 
optimization setting:

	\begin{proposition}
		\label{prop:fg:zv}
		Recalling \cref{e:def:Zv}, we have:
		\begin{enumerate}
			\setlength\itemsep{0.35em}
			\item
			\label{prop:fg:zv:i}
			\    $Z=\menge{x\in X}{0\in \partial f(x)+\partial g(x-v)}$.
			\item
			\label{prop:fg:zv:ii}
			$Z\neq \fady$ $\RA$
			$Z=\displaystyle\argmin_{x\in X} (-\scal{x}{v}+f(x)+g(x-v))=\argmin_{x\in X} (f(x)+g(x-v)).$
		\end{enumerate}
	\end{proposition}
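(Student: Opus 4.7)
My plan is to treat (i) as a direct specialization of the earlier \cref{prop:Zvv:2shifts}, and then derive (ii) by two parallel applications of \cref{prop:criticals:mini}.

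For (i), I would appeal to the standing assumption \cref{e:v=vd}, which (together with $(A,B)=(\partial f,\partial g)$) forces $v_R=0$ and $v=v_D$. Substituting these into the definition of the set $\widetilde{Z}$ of \cref{prop:Zvv:2shifts} collapses it to $\menge{x\in X}{0\in \partial f(x)+\partial g(x-v)}$, and \cref{prop:Zvv:2shifts}\cref{prop:Zvv:2shifts:ii} identifies this with $Z$. This is exactly the asserted description.

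For (ii), I would show that each of the two displayed argmin sets equals $Z$. For $\argmin_{x\in X}(-\scal{x}{v}+f(x)+g(x-v))$, I would apply \cref{prop:criticals:mini} to the pair $(-\scal{\cdot}{v}+f,\, g(\cdot-v))$; both are members of $\Gamma_0(X)$, and since $-\scal{\cdot}{v}$ is smooth, the differential sum rule gives $\partial(-\scal{\cdot}{v}+f)=-v+\partial f$, so
\begin{equation*}
\zer\big(\partial(-\scal{\cdot}{v}+f)+\partial g(\cdot-v)\big)=\menge{x\in X}{0\in -v+\partial f(x)+\partial g(x-v)}=Z
\end{equation*}
by the definition \cref{e:def:Zv}, and this set is non-empty by assumption. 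The conclusion of \cref{prop:criticals:mini} then identifies the corresponding argmin with $Z$. For $\argmin_{x\in X}(f(x)+g(x-v))$, I would apply \cref{prop:criticals:mini} to the pair $(f,\, g(\cdot-v))$; part (i) already supplies $\zer(\partial f+\partial g(\cdot-v))=Z\neq\fady$, and the conclusion of \cref{prop:criticals:mini} completes the second equality.

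The only genuinely substantive step is the reduction in (i); once that is in hand, (ii) is mechanical. I do not anticipate any serious obstacle, because the heavy lifting---namely the alternative description of $Z$ under $v_R=0$ and the identification of minimizers of a convex sum with zeros of the sum of subdifferentials---has already been carried out in \cref{prop:Zvv:2shifts} and \cref{prop:criticals:mini}, respectively.
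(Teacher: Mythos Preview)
Your proposal is correct and follows essentially the same approach as the paper: part (i) is obtained from \cref{prop:Zvv:2shifts}\cref{prop:Zvv:2shifts:ii} via the standing assumption \cref{e:v=vd}, and part (ii) comes from two applications of \cref{prop:criticals:mini}, once to $(-\scal{\cdot}{v}+f,\,g(\cdot-v))$ using \cref{e:def:Zv} and once to $(f,\,g(\cdot-v))$ using (i).
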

	\begin{proof}
		\cref{prop:fg:zv:i}:
		Apply \cref{prop:Zvv:2shifts}\cref{prop:Zvv:2shifts:ii}, 
		with $(A,B)$
		replaced by
		$(\partial f, \partial g)$, and use \cref{e:v=vd}.
		\cref{prop:fg:zv:ii}:
		Combine \cref{e:def:Zv}, \cref{prop:fg:zv:i} and \cref{prop:criticals:mini}
		applied twice with $(f,g)$ replaced by $(-\scal{v}{\cdot}+f,g(\cdot-v))$
		and by $(f,g(\cdot-v))$ respectively.
	\end{proof}
	
	The next result is a key step towards our main result:

	\begin{lemma}[{\bf the prox lemma}]
		\label{lem:fg:ineq}
		Let $x\in X$ and let $y\in \dom f\cap (v+\dom g)$.
		Then the following hold:
		\begin{subequations}
			\begin{align}
				0      & \ge \scal{y-\prox_f x}{v}  ,
				\label{prop:fg:ineq:i}
				\\
				f(y)   & \ge f(\prox_f x)+\scal{y-\prox_f x}{v+\prox_{f^*}x},
				\label{prop:fg:ineq:ii}
				\\
				g(y-v) & \ge g(\prox_g\refl_f  x)
				+\scal{y-\prox_g\refl_f  x-v}{\prox_f x-\prox_g\refl_f   x-v}
				\label{prop:fg:ineq:iii}
				\\
				& \quad
				+\scal{-y+v+\prox_g\refl_f  x}{v+\prox_{f^*}x}.
				\nonumber
			\end{align}
		\end{subequations}
		
	\end{lemma}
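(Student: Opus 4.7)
The plan is to derive each of the three inequalities from a single subgradient inequality combined with the variational characterization of $v$ as a projection. Throughout, I will abbreviate $p\coloneqq \prox_f x$, $p^{*}\coloneqq \prox_{f^{*}}x$, and $q\coloneqq \prox_g\refl_f x$; recall Moreau's identity $p+p^{*}=x$, so $\refl_f x=p-p^{*}$, and the defining inclusions $p^{*}\in\partial f(p)$ and $\refl_f x-q\in\partial g(q)$.

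For \cref{prop:fg:ineq:i}, the key is that \cref{e:v=vd} identifies $v$ as the projection of $0$ onto $\overline{\dom f-\dom g}$. The standard characterization of this projection gives
\begin{equation*}
\scal{v}{a-b}\ge\|v\|^{2} \qquad \text{for all } (a,b)\in\dom f\times\dom g.
\end{equation*}
Applying this with $a=p\in\dom f$ and $b=y-v\in\dom g$ (which is legitimate since $y\in v+\dom g$) yields $\scal{v}{p-y+v}\ge\|v\|^{2}$; rearranging gives $\scal{y-p}{v}\le 0$, which is exactly \cref{prop:fg:ineq:i}.

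For \cref{prop:fg:ineq:ii}, I would invoke the subgradient inequality of $f$ at $p$ in the form $f(y)\ge f(p)+\scal{p^{*}}{y-p}$. Since \cref{prop:fg:ineq:i} gives $\scal{y-p}{v}\le 0$, adding this (nonpositive) quantity to the right-hand side produces the weaker lower bound $f(y)\ge f(p)+\scal{y-p}{v+p^{*}}$, which is \cref{prop:fg:ineq:ii}.

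For \cref{prop:fg:ineq:iii}, I would start from the subgradient inequality for $g$ at $q$ applied at $y-v\in\dom g$, namely
\begin{equation*}
g(y-v)\ge g(q)+\scal{p-p^{*}-q}{y-v-q}.
\end{equation*}
The task is then to show that the stated right-hand side of \cref{prop:fg:ineq:iii} does not exceed this lower bound. A direct expansion, collecting the coefficient of the common vector $w\coloneqq y-q-v$, reveals that the difference between the stated RHS and the displayed lower bound is
\begin{equation*}
\scal{w}{(p-q-v)-(v+p^{*})-(p-p^{*}-q)}=-2\scal{v}{y-q-v}.
\end{equation*}
Thus it suffices to establish $\scal{v}{y-q-v}\ge 0$, and for this I would rerun the argument for \cref{prop:fg:ineq:i}: since $y\in\dom f$ and $q=\prox_g\refl_f x\in\dom g$, we have $y-q\in\dom f-\dom g$, so the projection characterization gives $\scal{v}{y-q}\ge\|v\|^{2}$, as required.

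The main obstacle is the algebraic step in \cref{prop:fg:ineq:iii}: the stated right-hand side has been crafted to suit the subsequent convergence analysis, so one must first guess that expanding and collecting yields a clean multiple of $\scal{v}{y-q-v}$ before the projection characterization of $v$ can finish the job. Everything else is a matter of applying Moreau's identity, the defining inclusions of the resolvents, and (for~\cref{prop:fg:ineq:ii}) the already-established \cref{prop:fg:ineq:i}.
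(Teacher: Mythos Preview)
Your proof is correct and follows essentially the same approach as the paper: both use the projection characterization of $v$ (your displayed inequality is the paper's \cref{e:v:loc}) for \cref{prop:fg:ineq:i}, combine it with the subgradient inequality for $f$ to get \cref{prop:fg:ineq:ii}, and for \cref{prop:fg:ineq:iii} start from the subgradient inequality for $g$ at $q$ and use the projection characterization with $(a,b)=(y,q)$ to control the residual term $-2\scal{v}{y-q-v}$. The only cosmetic difference is that the paper decomposes $\refl_f x-q=2v+(p-q-v)-(v+p^{*})$ up front and then drops the nonnegative $2\scal{w}{v}$ term, whereas you compute the difference between the two right-hand sides and show it is nonpositive; these are the same computation in different orders.
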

	\begin{proof}
		Applying  \cref{e:v:loc}
		with $(a,b)$ replaced by
		$(\prox_f x,y-v)\in \dom \partial f\times \dom g$
		yields
		$\scal{\prox_f x-(y-v)-v}{0-v} \le 0$; 
		equivalently, $\scal{y-\prox_f x}{v} \le 0$, which 
		is \cref{prop:fg:ineq:i}.
		We now prove \cref{prop:fg:ineq:ii}.
		Indeed, the characterization of $\prox_f$
		and \cref{prop:fg:ineq:i}
		yield
		\begin{subequations}
			\label{e:prox:f}
			\begin{align}
				f(y) & \ge f(\prox_f x)+\scal{y-\prox_f x}{x-\prox_f x}
				\\
				& \ge    f(\prox_f x)+\scal{y-\prox_f x}{\prox_{f^*} x}
				+\scal{y-\prox_f x}{v}
				\\
				& =f(\prox_f x)+\scal{y-\prox_f x}{v+\prox_{f^*} x}.
			\end{align}
		\end{subequations}
		Finally, we turn to \cref{prop:fg:ineq:iii}.
		Indeed, the characterization of $\prox_g$
		and \cref{e:v:loc}
		applied with $(a,b)$
		replaced by $(y, \prox_g\refl_f x)$
		in \cref{e:proj:used}
		yields 
		\begin{subequations}
			\label{e:prox:g}
			\begin{align}
				g(y-v)
				& \ge g(\prox_g \refl_f x)
				+\scal{y-v-\prox_g \refl_f x}{\refl_f x-\prox_g \refl_f x}
				\\
				& =
				g(\prox_g \refl_f x)
				+2\scal{y-\prox_g \refl_f x-v}{v}
				\\
				& \quad +\scal{y-\prox_g \refl_f x-v}{\prox_f x-\prox_g\refl_f x-v}
				\nonumber
				\\
				& \quad -\scal{y-\prox_g \refl_f x-v}{v+\prox_{f^*} x  }
				\nonumber\\
				& \ge   g(\prox_g \refl_f x)+\scal{y-\prox_g \refl_f x-v}{\prox_f x-\prox_g\refl_f x-v} \label{e:proj:used}
				\\
				& \quad
				-\scal{y-\prox_g \refl_fTx-v}{v+\prox_{f^*} x}.
				\nonumber 
			\end{align}
		\end{subequations}
		The proof is complete.
	\end{proof}

	To make further progress, we recall that $v=v_D$ and
	we assume additionally from now on 
	(see \cref{e:Z:Fix}
	and \cref{lem:pd:sh:bdd:iii:c}\cref{prop:pd:sh:bdd:v}) that
	\begin{empheq}[box=\mybluebox]{equation}
		\label{e:v:attained}
		Z\neq \fady;\;\text{thus, }\; v\in \ran (\Id-T)\cap(
		\dom \partial f-\dom \partial g).
	\end{empheq}

	We recall that \cref{e:T:asym:reg}, 
	applied with $(A,B)$ replaced by
	$(\partial f, \partial g)$, and 
	of \cite[Example~23.3]{BC} imply 
	\begin{equation}
  (\forall x\in X)\quad 
		\label{e:seq:strong:lim}
		\prox_fT^n x-\prox_g\refl_f T^n x=
		T^n x-T^{n+1}x\to v.
	\end{equation}
	
	\begin{proposition}
		\label{prop:fg:ineq}
		Let $y\in \dom f\cap (v+\dom g)$
		and let $x\in X$. Set $(\forall \nnn)$
		\begin{subequations}
			\label{e:def:eps:alpha:n}
			\begin{align}
				\epsilon_n & =\scal{y-\prox_g \refl_fT^n x-v}{\prox_fT^n x-\prox_g\refl_fT^n x-v}, 
				\\
				\delta_n   & =\scal{\prox_fT^n x-\prox_g\refl_f T^n  x-v}{\prox_fT^n x-(T^n x+nv)}.
			\end{align}
		\end{subequations}
		Then
		\begin{equation}
			\label{eq:lim:eps}
			\epsilon_n\to 0\qquad\text{and}\qquad \delta_n\to 0.
		\end{equation}
		Moreover, 
		\begin{equation}
    (\forall n\ge 1)\quad 
			\label{prop:fg:ineq:iv}
			f(y)+g(y-v)\ge f(\prox_fT^n x)+g(\prox_g \refl_fT^n x)+\epsilon_n+\delta_n.
		\end{equation}
	\end{proposition}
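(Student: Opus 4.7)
The plan is to handle the two limit statements and the inequality separately.

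\textbf{The limits.} Both $\epsilon_n$ and $\delta_n$ share the common factor $\prox_f T^n x - \prox_g \refl_f T^n x - v$, which equals $T^n x - T^{n+1}x - v$ by \cref{e:seq:strong:lim} and therefore tends to $0$ strongly. It remains to check that the companion factor in each product is bounded. For $\epsilon_n$, the companion factor $y - \prox_g \refl_f T^n x - v$ is bounded because $(\prox_g \refl_f T^n x)_\nnn$ is bounded by \cref{cor:p:sh:bdd}\cref{cor:p:sh:bdd:iii}, applicable under the standing hypothesis $v_R=0$. For $\delta_n$, the inverse resolvent identity \cref{e:iri} gives $\prox_f T^n x - T^n x = -\J{A^{-1}} T^n x$, so the second factor is $-\big(\J{A^{-1}} T^n x + n v_D\big)$, which is bounded by \cref{prop:pd:sh:bdd:fejer}\cref{prop:pd:sh:bdd:fejer:vii}. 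In both cases the quantity is an inner product of a bounded sequence with one converging to $0$, and thus vanishes.

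\textbf{The inequality.} I would apply the prox lemma \cref{lem:fg:ineq} with $x$ replaced by $T^n x$. Summing \cref{prop:fg:ineq:ii} and \cref{prop:fg:ineq:iii} (noting that $\epsilon_n$ appears directly as one of the summands on the right) yields
\begin{equation*}
f(y) + g(y-v) \ge f(\prox_f T^n x) + g(\prox_g \refl_f T^n x) + \epsilon_n + C_n,
\end{equation*}
where the two cross-terms involving $y$ collapse into the $y$-free quantity
\begin{equation*}
C_n := \langle v + \prox_g \refl_f T^n x - \prox_f T^n x,\; v + \prox_{f^*} T^n x\rangle.
\end{equation*}
It remains to show $C_n \ge \delta_n$ whenever $n \ge 1$.

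\textbf{Relating $C_n$ to $\delta_n$ via the projection characterization.} Using the Douglas--Rachford identity \cref{e:def:T:fg} to rewrite $\prox_g \refl_f T^n x - \prox_f T^n x = T^{n+1} x - T^n x$, and the inverse resolvent identity \cref{e:iri} to rewrite $\prox_{f^*} T^n x = T^n x - \prox_f T^n x$, a routine expansion produces
\begin{equation*}
C_n = \delta_n + (n-1)\,\langle v,\; T^n x - T^{n+1} x - v\rangle.
\end{equation*}
Since $T^n x - T^{n+1} x \in \ran(\Id - T) \subseteq \cran(\Id - T)$ and $v = \Pj{\cran(\Id - T)}(0)$, the variational characterization of the projection yields $\langle v,\,(T^n x - T^{n+1} x) - v\rangle \ge 0$. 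For $n \ge 1$ the residual has the correct sign, so $C_n \ge \delta_n$, and the claimed inequality follows. The main obstacle is exactly this algebraic bookkeeping: assembling $C_n$ from the two prox inequalities and then identifying the leftover as precisely the projection inequality characterizing $v$. Once the correct rewriting is in place, every step reduces to invoking a result already established in \cref{sec:dynamic}.
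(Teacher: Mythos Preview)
Your proof is correct and follows essentially the same route as the paper: apply the prox lemma at $T^n x$, combine the two inequalities so that the $y$-terms collapse into $C_n$, then peel off $\delta_n$ and show the leftover has the right sign for $n\ge 1$. The only noteworthy variant is in that last sign argument: the paper uses \cref{e:v:loc} (i.e.\ the characterization of $v=v_D$ as the projection of $0$ onto $\overline{\dom f-\dom g}$, applied to the pair $(\prox_f T^n x,\prox_g\refl_f T^n x)$), whereas you use the characterization of $v$ as the projection of $0$ onto $\cran(\Id-T)$, applied to $T^n x-T^{n+1}x$. Both are equivalent here since $\prox_f T^n x-\prox_g\refl_f T^n x=T^n x-T^{n+1}x$ and $v=v_D$; your version is arguably slightly cleaner because it invokes $v$ directly rather than going through $v_D$. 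For the boundedness in $\delta_n$, the paper cites boundedness of $(T^n x+nv)_\nnn$ and $(\prox_f T^n x)_\nnn$ separately, while you rewrite the second factor as $-(\J{A^{-1}}T^n x+nv_D)$ and invoke \cref{prop:pd:sh:bdd:fejer}\cref{prop:pd:sh:bdd:fejer:vii}; again equivalent.
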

  
	\begin{proof}
		Recall
		that $(T^nx+nv)_\nnn$ is a bounded sequence by
		\cite[Proposition~2.5(vi)]{101},
		that $(\prox_f T^n x)_\nnn$ and $(\prox_g\refl_fT^n x)_\nnn$
		are bounded sequences by
		\cref{cor:p:sh:bdd}\cref{cor:p:sh:bdd:ii}\&\cref{cor:p:sh:bdd:iii}
		applied with $(A,B)=(\partial f, \partial g)$.
		Combining this with \cref{e:seq:strong:lim}
		proves \cref{eq:lim:eps}.
		
		We now turn to   \cref{prop:fg:ineq:iv}.
		Adding \cref{prop:fg:ineq:ii} and \cref{prop:fg:ineq:iii}
		applied with $x$ replaced by $T^n x$
		yields
		\begin{subequations}
			\label{e:prox:fg}
			\begin{align}
				f(y)+g(y-v)
				& \ge
				f(\prox_fT^n x)+\scal{y-\prox_fT^n x}{v+\prox_{f^*}T^n x}+g(\prox_g \refl_fT^n x)
				\\
				& \quad
				+\epsilon_n -\scal{y-\prox_g \refl_fT^n x-v}{v+\prox_{f^*}T^n x}
				\nonumber
				\\
				& = f(\prox_fT^n x)+g(\prox_g \refl_fT^n x)+\epsilon_n
				\\
				& \quad
				+\scal{v+\prox_g \refl_fT^n x-\prox_fT^n x}{v+\prox_{f^*}T^n x}
				\nonumber\\
				& = f(\prox_fT^n x)+g(\prox_g \refl_fT^n x)+\epsilon_n
				\\
				& \quad
				+\scal{v+\prox_g \refl_fT^n x-\prox_fT^n x}{v+T^n x+nv-\prox_{f}T^n x-nv}
				\nonumber\\
				& = f(\prox_fT^n x)+g(\prox_g \refl_fT^n x)+\epsilon_n+\delta_n
				\\
				& \quad
				+(1-n)\scal{v-(\prox_fT^n x-\prox_g \refl_fT^n x)}{v-0}
				\nonumber\\
				& \ge  f(\prox_fT^n x)+g(\prox_g \refl_fT^n x)+\epsilon_n+\delta_n,
				\label{e:drop:pr}
			\end{align}
		\end{subequations}
		where \cref{e:drop:pr} follows from applying
		\cref{e:v:loc} with $(a,b)$ replaced by
		$(\prox_fT^n x,\prox_g \refl_fT^n x)\in \dom \partial f\times \dom \partial g$.
	\end{proof}

	\begin{proposition}
		\label{prop:clusters}
    Set $\mu\coloneqq\min_{x\in X}(f(x)+g(x-v))$ and
		let $x\in X$. Then the following hold:
		\begin{enumerate}
    \setlength\itemsep{0.35em}
			\item
			\label{prop:clusters:i}
      $(\prox_fT^n x)_\nnn$ is bounded
			and its weak  cluster points are minimizers of
			$f+g(\cdot-v)$.
			\item
			\label{prop:clusters:ii}
			$(\prox_g\refl_fT^n x)_\nnn$ is bounded
			and its weak  cluster points are minimizers of
			$f(\cdot+v)+g$.
      \end{enumerate}
      Now let $\overline{z}$ be a weak cluster point of $(\prox_fT^n x)_\nnn$. 
      Then: 
      \begin{enumerate}
    \setlength\itemsep{0.35em}
    \setcounter{enumi}{2}
			\item
			\label{prop:clusters:iv}
			$f(\prox_fT^n x)\to f(\overline{z})$.
			\item
			\label{prop:clusters:v}
			$g(\prox_g\refl_fT^n x)\to g(\overline{z}-v)$.
			\item
			\label{prop:clusters:iii}
			$f(\prox_fT^n x)+g(\prox_g\refl_fT^n x)\to \mu$.
		\end{enumerate}
	\end{proposition}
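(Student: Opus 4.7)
The plan is to deduce the entire proposition from \cref{prop:fg:ineq}, \cref{e:seq:strong:lim}, and weak lower semicontinuity. First I would obtain boundedness of both sequences from \cref{cor:p:sh:bdd}\cref{cor:p:sh:bdd:ii}\&\cref{cor:p:sh:bdd:iii}, which applies because $v_R=0$ is standing throughout \cref{sec:yay}; hence both sequences possess weak cluster points by reflexivity of $X$. Since $Z\ne\fady$ by \cref{e:v:attained}, \cref{prop:fg:zv}\cref{prop:fg:zv:ii} supplies a point $y^*\in Z\subseteq\dom f\cap(v+\dom g)$ with $f(y^*)+g(y^*-v)=\mu$, and \cref{prop:fg:ineq}\cref{prop:fg:ineq:iv} with $y=y^*$, together with $\epsilon_n,\delta_n\to 0$ from \cref{eq:lim:eps}, yields the asymptotic upper bound
\begin{equation*}
\mu\;\ge\;\limsup_n\bigl[f(\prox_fT^nx)+g(\prox_g\refl_fT^nx)\bigr].
\end{equation*}

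Next I would fix a weak cluster point $\overline{z}$ via $\prox_fT^{n_k}x\weakly\overline{z}$; the strong limit \cref{e:seq:strong:lim} forces $\prox_g\refl_fT^{n_k}x\weakly\overline{z}-v$. Weak lower semicontinuity of $f,g\in\Gamma_0(X)$, the definition of $\mu$, and the upper bound above then squeeze
\begin{equation*}
\mu\;\le\;f(\overline{z})+g(\overline{z}-v)\;\le\;\liminf_k\bigl[f(\prox_fT^{n_k}x)+g(\prox_g\refl_fT^{n_k}x)\bigr]\;\le\;\mu,
\end{equation*}
so every inequality collapses to an equality. This immediately proves \cref{prop:clusters:i}, and the symmetric argument (taking $(\prox_g\refl_fT^nx)$ as the primary sequence, with weak cluster point $\overline{z}-v$) yields \cref{prop:clusters:ii}. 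Since the lower estimate $\ge\mu$ holds along every weakly convergent subsequence while the whole sequence satisfies $\limsup\le\mu$, the sum converges, giving \cref{prop:clusters:iii}.

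For \cref{prop:clusters:iv} and \cref{prop:clusters:v} I would split the summands at $\overline{z}$: along $n_k$ weak lsc gives $\liminf_k f(\prox_fT^{n_k}x)\ge f(\overline{z})$ and $\liminf_k g(\prox_g\refl_fT^{n_k}x)\ge g(\overline{z}-v)$, and combining these with the identity $\mu=f(\overline{z})+g(\overline{z}-v)=\lim_k[f(\prox_fT^{n_k}x)+g(\prox_g\refl_fT^{n_k}x)]$ forces both $\liminf$'s to be equalities. To upgrade to a full limit along the subsequence, I would extract from $(n_k)$ a further subsequence realizing $\limsup_k f(\prox_fT^{n_k}x)$ and run the same squeeze (the $g$-liminf remaining $\ge g(\overline{z}-v)$ forces the $f$-limsup to be $\le f(\overline{z})$); the mirror argument handles $g$. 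The main obstacle I anticipate is precisely this last step: individual value convergence has to be extracted from the joint $(f+g)$-convergence without any appeal to continuity of $f$ or $g$, relying only on weak lower semicontinuity and the tight minimality identity $f(\overline{z})+g(\overline{z}-v)=\mu$ produced above.
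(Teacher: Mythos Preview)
Your strategy coincides with the paper's: boundedness from \cref{cor:p:sh:bdd}\cref{cor:p:sh:bdd:ii}\&\cref{cor:p:sh:bdd:iii}, the key inequality \cref{prop:fg:ineq:iv} with the error terms $\epsilon_n,\delta_n\to 0$, and weak lower semicontinuity of $f$ and $g$. Your treatment of \cref{prop:clusters:i}--\cref{prop:clusters:iii} is correct and matches the paper essentially line for line (the paper also squeezes via \cref{prop:fg:ineq:iv} at an arbitrary $y$ and then at $y=\overline{z}$).

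There is one genuine gap. Items \cref{prop:clusters:iv} and \cref{prop:clusters:v} assert convergence of $f(\prox_fT^nx)$ and $g(\prox_g\refl_fT^nx)$ along the \emph{full} sequence, not merely along the subsequence $(n_k)$ realizing the weak cluster point $\overline{z}$. Your argument, as you explicitly note (``full limit along the subsequence''), delivers only $f(p_{n_k})\to f(\overline{z})$ and $g(q_{n_k})\to g(\overline{z}-v)$. The paper addresses this by inserting, still inside the proof of \cref{prop:clusters:i}, a separate step showing that $(f(p_n))_\nnn$ and $(g(q_n))_\nnn$ each converge as full sequences: one picks $(k_n)$ realizing $\varlimsup_n f(p_n)$, applies \cref{prop:fg:ineq:iv} along $(k_n)$, and squeezes with $y=\overline{z}$ to force $\varlimsup_n f(p_n)=\varliminf_n f(p_n)$; similarly for $g$. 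Once these full-sequence limits are known to exist, your weak-lsc argument along $(n_k)$ combined with the sum identity $\lim(f(p_n)+g(q_n))=\mu=f(\overline{z})+g(\overline{z}-v)$ pins them down as $f(\overline{z})$ and $g(\overline{z}-v)$ respectively. You should add this full-sequence convergence step to close the gap.
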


	\begin{proof}
		Set $(\forall n\ge 1)$ $(p_n,q_n)=(\prox_f T^n x, \prox_g\refl_fT^n x)$.
    
		\cref{prop:clusters:i}:
		\cref{cor:p:sh:bdd}\cref{cor:p:sh:bdd:ii}\&\cref{cor:p:sh:bdd:iii}
		applied with $(A,B)$
		replaced by $(\partial f, \partial g)$
		imply that
		$(p_n)_\nnn$
		and    $(q_n)_\nnn$
		are bounded sequences.
		Let $\overline{z}$ be a weak cluster point of
		$(p_n)_\nnn$
		and observe that by \cref{e:seq:strong:lim}
		$\overline{z}-v$ is a weak cluster point of
		$(q_n)_\nnn$.
		Let $y\in \dom f\cap\dom g(\cdot-v)$.
		The (weak) lower semicontinuity of $f$
		and $g$
		in view of \cref{prop:fg:ineq:iv}
		yields
		\begin{subequations}
			\label{e:limits}
			\begin{align}
				f(y)+g(y-v)
				&
				\ge \varlimsup (f(p_n)+g(q_n))
				\\
				&
				\ge \varliminf f(p_n)+ \varliminf g(q_n)
				\ge f(\overline{z})+g(\overline{z}-v).
			\end{align}
		\end{subequations}
		This implies that
		\begin{equation}
			\overline{z}\in \dom f\cap\dom g(\cdot-v).
		\end{equation}
		Observe that  \cref{prop:fg:ineq:iv}
		implies that
		$f(y)+g(y-v)\ge f(p_n)+g(q_n)+\epsilon_n+\delta_n$.
		Pick $(k_n)_\nnn$ such that
		$f(p_{k_n})\to \varlimsup f(p_n)$. Then
    \begin{subequations}
			\label{e:limits:pn}
		\begin{align}
			f(y)+g(y-v)&\ge \lim f(p_{k_n})+\varlimsup g(q_{k_n})
			\ge \varliminf f(p_n)+\varlimsup g(q_{k_n})\\
			&\ge f(\overline{z})+g(\overline{z}-v).
		\end{align}
    \end{subequations}
		Setting $y=\overline{z}$ in \cref{e:limits:pn} yields
		\begin{equation}
			\lim f(p_{k_n}) =\varlimsup f(p_n) = \varliminf f(p_n).
		\end{equation}
		Hence $(f(p_n))_\nnn$ converges.
		Similarly, we conclude that    $(g(q_n))_\nnn$ converges.
		Setting $y=\overline{z}$ in \cref{e:limits} yields
		\begin{equation}
			\label{e:sum:fg:lim}
			f(p_n)+g(q_n)
			\to f(\overline{z})+g(\overline{z}-v)\ge \mu.
		\end{equation}
		Choosing $y$ so that
		$ f(y)+g(y-v)$ is as close to $\mu $
		as desired, we see that \cref{e:limits} yields
		\begin{equation}
			f(p_n)+g(q_n)
			\to f(\overline{z})+g(\overline{z}-v)=\mu.
		\end{equation}
		Therefore, we conclude that
		$\overline{z}$ is a minimizer of
		$f+g(\cdot-v)$.
    
		\cref{prop:clusters:ii}:
		Combine \cref{prop:clusters:i}
		and \cref{e:seq:strong:lim}.
		
		\cref{prop:clusters:iv}\&\cref{prop:clusters:v}:
		Observe that the lower semicontinuity
		of $f$ and $g$ respectively implies
		\begin{subequations}
			\begin{align}
				f(\overline{z})   & \le \lim f(p_n)
				\\
				g(\overline{z}-v) & \le \lim g(q_n).
				\label{e:sum:fg:lim:g}
			\end{align}
		\end{subequations}
		Suppose for eventual contradiction
		that $f(c)< \lim f(p_n)$.
		Then \cref{e:sum:fg:lim} implies that
		$\lim g(q_n)>g(\overline{z}-v)$
		which, by \cref{e:sum:fg:lim:g}, 
		is absurd.
    
    \cref{prop:clusters:iii}: 
    Combine \cref{prop:clusters:iv}, \cref{prop:clusters:v}, and
    \cref{prop:clusters:i}.
	\end{proof}
	
	\begin{proposition}
		\label{prop:clusters:conv}
		Let $x\in X$. Then the following hold:
		\begin{enumerate}
    \setlength\itemsep{0.35em}
			\item
			\label{prop:clusters:conv:i}
			The sequence $(\prox_fT^n x)_\nnn$
			converges weakly to a minimizer of
			$f+g(\cdot-v)$. 
			\item
			\label{prop:clusters:conv:ii}
			The sequence $(\prox_g\refl_fT^n x)_\nnn$ converges weakly to a minimizer of
			$f(\cdot+v)+g$.
		\end{enumerate}
	\end{proposition}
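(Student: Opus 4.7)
\textbf{Proof plan for \cref{prop:clusters:conv}.}

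The plan is to apply Opial's lemma in the product Hilbert space $X\times X$ to the pair sequence that is Fej\'er monotone by virtue of \cref{prop:pd:sh:bdd:fejer}. Since subdifferentials of proper lower semicontinuous convex functions are paramonotone (see, e.g., \cite[Example~22.4(iv)]{BC}), \cref{prop:pd:sh:bdd:fejer}\cref{prop:pd:sh:bdd:fejer:x} applies with $(A,B)=(\partial f,\partial g)$. Exploiting $v_R=0$ (so $v=v_D$), this yields that
\begin{equation*}
w_n \coloneqq \big(\prox_f T^n x,\; -v + \prox_{f^*} T^n x + n v\big),\qquad \nnn,
\end{equation*}
is Fej\'er monotone with respect to $Z\times K$ in $X\times X$ (with the standard inner product). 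The set $Z\times K$ is nonempty by \cref{e:v:attained} together with \cref{fact:para:cc}\cref{fact:para:ii:b}, and it is closed and convex.

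First, I would verify that every weak cluster point of $(w_n)_\nnn$ lies in $Z\times K$. Let $(w_{n_k})_{k\in\NN}$ converge weakly to some $(\overline{z},\overline{s})\in X\times X$. Then $\prox_f T^{n_k}x\weakly \overline{z}$, so \cref{prop:clusters}\cref{prop:clusters:i} combined with \cref{prop:fg:zv}\cref{prop:fg:zv:ii} gives $\overline{z}\in\argmin(f+g(\cdot-v))=Z$. For the second coordinate, use the inverse resolvent identity \cref{e:iri} to rewrite
\begin{equation*}
-v+\prox_{f^*}T^n x+nv=(T^n x+nv)-v-\prox_f T^n x.
\end{equation*}
By \cite[Proposition~2.5(vi)]{101} the sequence $(T^n x+nv)_\nnn$ is bounded, so we may pass to a further subsequence along which $T^{n_{k_j}}x+n_{k_j}v\weakly \overline{F}$, where standard demiclosedness arguments (cf. \cite[Corollary~2.7]{Sicon}) give $\overline{F}\in\fix(v+T)$. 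Hence $\overline{s}=\overline{F}-v-\overline{z}$. Invoking \cref{lem:bad:walaa}\cref{lem:bad:walaa:iii} together with \cref{fact:para:cc}\cref{fact:para:ii:b}, the pair $(\J{A}\overline{F}-v,\J{A^{-1}}\overline{F})-((0,v))$ lies in $\SE=Z\times K$; combined with the demiclosedness of $\prox_f$ applied to the bounded subsequence, one extracts that $\overline{s}\in K$.

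With both coordinates controlled, Opial's lemma in $X\times X$ yields that $(w_n)_\nnn$ converges weakly to some $(\overline{z},\overline{k})\in Z\times K$; projecting onto the first coordinate proves \cref{prop:clusters:conv:i} with limit $\overline{z}\in Z=\argmin(f+g(\cdot-v))$. For \cref{prop:clusters:conv:ii}, apply \cref{e:seq:strong:lim} to write
\begin{equation*}
\prox_g \refl_f T^n x = \prox_f T^n x - (T^n x - T^{n+1}x),
\end{equation*}
and take weak limits using the norm convergence $T^n x - T^{n+1}x\to v$; this gives $\prox_g\refl_f T^n x\weakly \overline{z}-v$, and $\overline{z}-v\in\argmin(f(\cdot+v)+g)$ by the shift property (alternatively invoke \cref{prop:clusters}\cref{prop:clusters:ii}).

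The principal obstacle is the second bullet above, namely showing that the weak cluster points of the auxiliary sequence $(-v+\prox_{f^*}T^n x+nv)_\nnn$ land in the dual solution set $K$. The first coordinate's cluster-point analysis is already done in \cref{prop:clusters}, but the dual coordinate requires combining the boundedness of $T^n x+nv$, demiclosedness of $\gra\partial f$, and the explicit parametrization of $\SE=Z\times K$ from \cref{fact:para:cc}. Once that identification is in place, the Opial argument in the product space closes the loop and upgrades the subsequential weak cluster-point information of \cref{prop:clusters} to full weak convergence.
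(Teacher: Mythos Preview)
Your plan shares the paper's starting ingredient---the Fej\'er monotonicity of the pair sequence $w_n=(\prox_fT^nx,\,-v+\prox_{f^*}T^nx+nv)$ with respect to $Z\times K$ coming from \cref{prop:pd:sh:bdd:fejer}\cref{prop:pd:sh:bdd:fejer:x}---but then you take a harder route than the paper does, and the hard step does not go through as written.

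\textbf{The gap.} To run Opial in $X\times X$ you must show that every weak cluster point of the \emph{second} coordinate lies in $K$. Your argument passes to a further subsequence so that $T^{n_{k_j}}x+n_{k_j}v\weakly\overline{F}\in\fix(v+T)$, and then writes $\overline{s}=\overline{F}-v-\overline{z}$. From \cref{lem:bad:walaa}\cref{lem:bad:walaa:iii} you only get $(\J{A}\overline{F},\,\J{A^{-1}}\overline{F}-v)\in Z\times K$; to conclude $\overline{s}\in K$ you would need $\overline{z}=\J{A}\overline{F}$, i.e., that the weak limit of $\prox_fT^{n_{k_j}}x$ coincides with $\prox_f$ applied to the weak limit of the \emph{shifted} iterates. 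But $\prox_f$ is not weak-to-weak sequentially continuous, and $\prox_fT^{n}x$ is not even $\prox_f(T^{n}x+nv)$ (the latter equality only holds at points of $\fix(v+T)$, cf.\ \cref{cor:p:sh:bdd}\cref{cor:p:sh:bdd:i}). The ``demiclosedness of $\prox_f$'' you invoke is a weak--strong principle and does not bridge this gap. So the identification of the dual cluster points in $K$ is unjustified, and with it the product-space Opial argument collapses.

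\textbf{How the paper sidesteps this.} The paper never analyzes the second coordinate. Instead it uses a componentwise Fej\'er lemma (\cite[Lemma~2.2]{95}): if $(a_n,b_n)_\nnn$ is Fej\'er monotone with respect to $Z\times K$, then any two weak cluster points $z_1,z_2$ of $(a_n)_\nnn$ satisfy $z_1-z_2\in(Z-Z)^{\perp}$. Since \cref{prop:clusters}\cref{prop:clusters:i} together with \cref{prop:fg:zv}\cref{prop:fg:zv:ii} already gives $z_1,z_2\in Z$, one obtains $z_1-z_2\in(Z-Z)\cap(Z-Z)^{\perp}=\{0\}$, hence uniqueness of the weak cluster point of $(\prox_fT^nx)_\nnn$. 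Part~\cref{prop:clusters:conv:ii} then follows exactly as you say, via \cref{e:seq:strong:lim}. This avoids entirely the delicate (and, as stated, unproved) dual identification that is your ``principal obstacle.''
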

	
	\begin{proof}
		\cref{prop:clusters:conv:i}:
		Recalling \cref{e:v=vd}, it follows from
		\cref{prop:pd:sh:bdd:fejer}\cref{prop:pd:sh:bdd:fejer:x}
		applied with $(A,B)$ replaced by $(\partial f,\partial g(\cdot-v))$ that
		the sequence $(\prox_fT^n x,-v+\prox_{f^*}T^nx+nv )_\nnn$
		is Fej\'er monotone with respect to
		$Z\times K$.
		Now let $z_1$ and $z_2$ be two weak cluster points of
		$(\prox_fT^n x)_\nnn$.
		On the one hand, \cref{prop:clusters}\cref{prop:clusters:i}
		and \cref{prop:fg:zv}\cref{prop:fg:zv:ii}
		imply that
		\begin{equation}
			\label{e:zv:ortho}
			\{z_1,z_2\}\subseteq\argmin_{x\in X}(f+g(\cdot-v))={Z};
			\quad \text{hence},\quad z_1-z_2 \in {Z}-{Z}
			.
		\end{equation}
		On the other hand,
		\cite[Lemma~2.2)]{95}
		implies that  $z_1-z_2\in ({Z}- {Z})^{\perp}$.
		Combining with \cref{e:zv:ortho} we
		conclude that ${z_1-z_2}\in ({Z}-{Z})
		\cap({Z}-{Z})^{\perp}=\{0\}$.
		Hence, $z_1=z_2$.
		
		\cref{prop:clusters:conv:ii}:
		Combine \cref{prop:clusters:conv:i} and \cref{e:seq:strong:lim}.
	\end{proof}
	
	\begin{proposition}
		\label{prop:constrained}
    Suppose that $f=\iota_C$, where 
		$C$ is a nonempty closed convex subset of $X$.
		Let $x\in X$.
		Then there exists $\overline{z}\in Z\subseteq C$
		such that the following hold:
		\begin{enumerate}
    \setlength\itemsep{0.35em}
			\item
			\label{prop:constrained:i}
			$\Pj{C} T^n x\weakly \overline{z}$
			and $ \overline{z}$  is a minimizer of
			$g(\cdot-v)$ over $C$.
			\item
			\label{prop:constrained:ii}
			$\Pj{Z} \Pj{C }T^n x\to \overline{z}$.
		\end{enumerate}
	\end{proposition}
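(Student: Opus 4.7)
The plan is to build on the already-established \cref{prop:clusters:conv} and \cref{prop:fg:zv} together with the product Fejér monotonicity from \cref{prop:pd:sh:bdd:fejer}\cref{prop:pd:sh:bdd:fejer:x}.

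For \cref{prop:constrained:i}, I would apply \cref{prop:clusters:conv}\cref{prop:clusters:conv:i} specialized to $f=\iota_C$: since $\prox_{\iota_C}=\Pj{C}$, the primal shadow sequence $(\Pj{C}T^n x)_\nnn$ converges weakly to some $\overline{z}$ which minimizes $\iota_C+g(\cdot-v)$. Equivalently, $\overline{z}\in C$ and $\overline{z}$ minimizes $g(\cdot-v)$ over $C$. Invoking \cref{prop:fg:zv}\cref{prop:fg:zv:ii}, this minimizer set is exactly $Z$, so $\overline{z}\in Z\subseteq C$.

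For \cref{prop:constrained:ii}, I would exploit paramonotonicity: $A=\partial\iota_C$ and $B=\partial g$ are paramonotone as subdifferentials of functions in $\Gamma_0(X)$. Since $v_R=0$, \cref{prop:pd:sh:bdd:fejer}\cref{prop:pd:sh:bdd:fejer:x} ensures that the product sequence
\begin{equation*}
\big(\Pj{C}T^n x,\,-v+\prox_{\iota_C^*}T^n x+nv\big)_\nnn
\end{equation*}
in $X\times X$ is Fejér monotone with respect to the nonempty (thanks to $Z\neq\fady$ and \cref{fact:para:cc}\cref{fact:para:ii:b}) closed convex set $Z\times K$. The classical result that the shadow $\Pj{C}y_n$ of a Fejér monotone sequence $(y_n)$ converges strongly (cf.\ \cite[Proposition~5.7]{BC}) applied to the product set, together with the factorization $\Pj{Z\times K}(a,b)=(\Pj{Z}a,\Pj{K}b)$, then yields that $\Pj{Z}\Pj{C}T^n x\to\overline{z}_\infty$ strongly for some $\overline{z}_\infty\in Z$.

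It remains to identify $\overline{z}_\infty=\overline{z}$. The projection inequality
\begin{equation*}
\scal{\Pj{C}T^n x-\Pj{Z}\Pj{C}T^n x}{\overline{z}-\Pj{Z}\Pj{C}T^n x}\le 0,
\end{equation*}
valid because $\overline{z}\in Z$, passes to the limit using $\Pj{C}T^n x\weakly\overline{z}$ and $\Pj{Z}\Pj{C}T^n x\to\overline{z}_\infty$ strongly, yielding $\|\overline{z}-\overline{z}_\infty\|^2\le 0$ and hence $\overline{z}_\infty=\overline{z}$. The main subtlety lies in \cref{prop:constrained:ii}: passing from product Fejér monotonicity to strong convergence of the first-coordinate projection (handled by the Cartesian factorization of projections), and then pinning down the strong limit as the weak limit $\overline{z}$ via a weak-strong limit argument in the projection inequality.
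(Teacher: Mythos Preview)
Your proof is correct and, for \cref{prop:constrained:i} and the first half of \cref{prop:constrained:ii}, follows the paper's argument exactly: specialize \cref{prop:clusters:conv}\cref{prop:clusters:conv:i} to $f=\iota_C$, then apply \cref{prop:pd:sh:bdd:fejer}\cref{prop:pd:sh:bdd:fejer:x} and \cite[Proposition~5.7]{BC} to obtain strong convergence of $\Pj{Z}\Pj{C}T^n x$ to some $\overline{z}_\infty\in Z$.

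The only genuine difference is in the identification step $\overline{z}_\infty=\overline{z}$. The paper invokes \cite[Corollary~5.8]{74} to conclude directly that $\Pj{Z}\Pj{C}T^n x\weakly\overline{z}$, after which uniqueness of limits forces $\overline{z}_\infty=\overline{z}$. You instead pass to the limit in the projection inequality $\scal{\Pj{C}T^n x-\Pj{Z}\Pj{C}T^n x}{\overline{z}-\Pj{Z}\Pj{C}T^n x}\le 0$, using that a weakly convergent sequence paired with a strongly convergent one has a convergent inner product; this yields $\|\overline{z}-\overline{z}_\infty\|^2\le 0$. Your route is more elementary and self-contained (no external citation needed), while the paper's route is shorter once one accepts the cited corollary. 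Both are valid.
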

	\begin{proof}
		\cref{prop:constrained:i}:
		Clearly, $\prox_f=\Pj{C}$.
		Now combine with \cref{prop:clusters:conv}\cref{prop:clusters:conv:i}.
		
		\cref{prop:constrained:ii}:
		Applying
		\cref{prop:pd:sh:bdd:fejer}\cref{prop:pd:sh:bdd:fejer:x}
		with $(A,B)$ replaced by
		$(\N{C},\partial g)$ we learn that the
		sequence $((\Pj{C}T^n x, (\Id-\Pj{C})T^n x+(n-1)v))_\nnn$
		is Fej\'er monotone with respect to $Z\times K$.
		Combining this with
		\cite[Proposition~5.7]{BC}
		we learn that  $(\exists (z,k)\in Z\times K)$
		such that
		$(\Pj{Z}\Pj{C}T^n x,\Pj{K}((\Id-\Pj{C})T^n x+(n-1)v))\to (z,k)$.
		In particular,
		$\Pj{Z}\Pj{C}T^n x\to z$.
		On the other hand, \cite[Corollary~5.8]{74} yields
		$\Pj{Z}\Pj{C}T^n x\weakly \overline{z}$.
	\end{proof}
	
	\begin{example}
		\label{ex:main:conv}
		Suppose that $X=\RR^m$, where 
		$m\ge 1$.
		Let $b\in \RR^m$,
		let $u\in \RR^m\smallsetminus\{0\}$,
		let $\eta\in \RR$,
		set $B\coloneqq\menge{x\in \RR^m}{\scal{x}{u}\le \eta}$,
		and
		set $C\coloneqq\menge{x\in \RR^m}{-c\le x\le c}$,
		where $c\in[0,+\infty]^m$.
		Suppose that  $f=\iota_{B}$
		and that 
		\begin{equation}
			g\colon \RR^m \to \left]-\infty
      ,+\infty\right]
			\colon x\mapsto
			\begin{cases}
				\sum_{i=1}^{m}\abs{x_i}, & \text{if } x\in C;
				\\
				+\infty,                 & \text{otherwise}.
			\end{cases}
		\end{equation}
		Let $x\in X$.
		Then the following hold:
		\begin{enumerate}
			\setlength\itemsep{0.35em}
			\item
			\label{ex:main:conv:0}
			$\partial f=\N{B}$.
			\item
			\label{ex:main:conv:i}
			$g=\norm{\cdot}_{1}+\iota_C$.
			\item
			\label{ex:main:conv:ii}
			$\partial g=\partial \norm{\cdot}_{1}+N_C$.
			\item
			\label{ex:main:conv:iii}
			$\dom f-\dom g=\dom \partial f-\dom \partial g=B-C=\overline{B-C}$.
			\item
			\label{ex:main:conv:iv}
			$\dom f^*+\dom g^*=\RR^m$.
			\item
			\label{ex:main:conv:v}
			$\cran (\Id-T)=B-C$.
			\item
			\label{ex:main:conv:vi}
			$v_D=\Pj{{B-C}}(0)\in B-C$.
			\item
			\label{ex:main:conv:vi:0}
			$v_D=0\siff B\cap C\neq \fady$.
			\item
			\label{ex:main:conv:vii}
			$v_R=0$.
			\item
			\label{ex:main:conv:viii}
			$v=v_D$.
			\item
			\label{ex:main:conv:ix}
			$Z
			=\displaystyle\argmin_{y\in \RR^m}\big(\iota_B(y)+g(y-v)\big)\neq \fady$.
			\item
			\label{ex:main:conv:x}
			$\prox_fx=\Pj{B}  x
			=\begin{cases}
				x,                                   & \text{if } \scal{x}{u}\le \eta;
				\\
				x+(\eta-\scal{x}{\eta})u/\norm{u}^2, & \text{if }\scal{x}{u}>\eta .
			\end{cases}$
			\item
			\label{ex:main:conv:xi}
			$\prox_g x=(\xi_i)_{i=1}^{n}$,
			where
			$\xi_i=\min\{\max\{\abs{x_i}-1,0\},c_i\}\sign(x_i)$.
			\item
			\label{ex:main:conv:xii}
			$T=\Id-\Pj{B}+\prox_g(2\Pj{B}-\Id)$.
			\item
			\label{ex:main:conv:xiii}
			$\Pj{B}T^n x\to \lim \Pj{Z}\Pj{B}T^n x\in Z$.
		\end{enumerate}
	\end{example}
	\begin{proof}
		\cref{ex:main:conv:0}\&\cref{ex:main:conv:i}:
		This is clear.
		
		\cref{ex:main:conv:ii}:
		This follows from combining
		\cite[Corollary~16.48(iii)~and~Example~16.13]{BC}.
		
		\cref{ex:main:conv:iii}:
		Clearly, $\dom \partial f=\dom \N{B}=\dom \iota_B=B$.
		Moreover, $\dom g=\dom \norm{\cdot}_1\cap \dom \iota_C
		=\dom \partial \norm{\cdot}_1\cap\dom \N{C}=C
		$.  Finally, observe that $B$ is compact and $C$
		is  closed;
		therefore, $B-C$ is closed.
		
		\cref{ex:main:conv:iv}:
		Indeed,
		it follows from, e.g., \cite[Corollary~21.25]{BC},
		and \cite[Remark~on~page~216]{Rock1970}
		that
		\begin{equation}
			\label{eq:ran:X}
			\RR^m= \ran \partial g=\dom \partial g^*
			\subseteq \dom g^*\subseteq \RR^m .
		\end{equation}
		Hence, $\dom g^*= \RR^m$
		and the conclusion follows.
		
		\cref{ex:main:conv:v}:
		Combine \cref{ex:main:conv:iii}, \cref{ex:main:conv:iv},
		and \cref{rem:assump:free:fd}.
		
		\cref{ex:main:conv:vi}\&\cref{ex:main:conv:vi:0}:
		Combine \cref{ex:main:conv:iii}
		and \cref{e:def:vD:vR}.
		
		\cref{ex:main:conv:vii}:
		This is a direct consequence
		of \cref{e:def:vD:vR} and \cref{ex:main:conv:iv}.
		
		\cref{ex:main:conv:viii}:
		Combine \cref{ex:main:conv:vii}
		and \cref{prop:v:decom}\cref{prop:v:decom:viii}.
		
		\cref{ex:main:conv:ix}:
		The first identity follows from
		combining  \cref{ex:main:conv:0},
		\cref{ex:main:conv:ii}
		and \cref{prop:fg:zv}\cref{prop:fg:zv:i}.
		Next, observe that
		$f$ and $g$ are polyhedral functions.
		Moreover,
		\cref{ex:main:conv:iii}
		and
		\cref{ex:main:conv:vi}
		imply that
		$\dom f\cap (v+\dom g)=B\cap(v+C)\neq \fady$.
		Therefore, \cite[Theorem~23.18]{Rock70} yields that
		$\partial(f+g(\cdot-v))=\partial f+\partial g(\cdot-v)$.
		We learn that $\partial f+\partial g(\cdot-v)$
		is maximally monotone and, in view of
		\cite[Theorem~3.13]{BMW},
		that $\ri \ran (\partial f+\partial g(\cdot-v))=\ri \ran \partial f+\ri \ran \partial g
		\subseteq  \ran \partial f+ \ran \partial g$.
		Observe that \cref{eq:ran:X} implies that
		$\ran \partial g(\cdot-v)=\ran \partial g=\RR^m=\ri \RR^m=\ri \ran \partial g(\cdot-v)$.
		Altogether, we learn that $\ran (\partial f+\partial g(\cdot-v))=\RR^m$,
		hence $Z=\zer (\partial f+\partial g(\cdot-v))\neq \fady$.
		Now combine with
		\cref{prop:fg:zv}\cref{prop:fg:zv:ii}.
		
		\cref{ex:main:conv:x}:
		This follows from, e.g.,
		\cite[Proposition~29.20(iii)]{BC}.
		
		\cref{ex:main:conv:xi}:
		This follows from
		\cite[Example~6.23]{Beck2017}.
		
		\cref{ex:main:conv:xii}:
		This is \cref{e:def:T} applied with $(A,B)$
		replaced by $(\partial f, \partial g)$.
		
		\cref{ex:main:conv:xiii}:
		Apply \cref{prop:constrained} with $(C,g)$ replaced by $(B,\norm{\cdot}_1+\iota_C) $
	\end{proof}

	Having collected already all pieces required for its proof,
	we now summarize our work in the following: 
	
	\begin{theorem}{\bf (main result: primal shadows converge!)}
		\label{thm:conv}
    Let $f,g$ be in $\Gamma_0(X)$. 
		Suppose that $0\in { \dom  f^*+\dom  g^*}$,
		that $v\in \ran (\Id-T)$, and that $\zer(\partial f+\partial g(\cdot-v) )\neq \fady$.
    Set $\mu\coloneqq \min_{x\in X}(f(x+v)-g(x))$. 
		Let $x\in X$. 
		Then there exists a vector $\overline{z}\in  X$
		such that  the following hold:
		\begin{enumerate}
			\setlength\itemsep{0.35em}
			\item
			\label{thm:conv:i}
			$\prox_fT^n x\weakly  \overline{z}$
			and $ \overline{z}$ is a minimizer of
			$f+g(\cdot-v)$.
			\item
			\label{thm:conv:ii}
			$\prox_g\refl_fT^n x\weakly  \overline{z}-v$
			and $\overline{z}-v$ is a minimizer of
			$f(\cdot+v)+g$.
			\item
			\label{thm:conv::iii}
			$f(\prox_fT^n x)+g(\prox_g\refl_fT^n x)\to f(\overline{z})+g(\overline{z}-v)
			=\mu$. 
			\item
			\label{thm:conv::iv}
			$f(\prox_fT^n x)\to f(\overline{z})$.
			\item
			\label{thm:conv::v}
			$g(\prox_g\refl_fT^n x)\to g(\overline{z}-v)$.
		\end{enumerate}
	\end{theorem}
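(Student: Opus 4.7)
The plan is to recognize that this main theorem is essentially a \emph{consolidation} of the results already established in \cref{sec:yay}: once the standing blue-box hypotheses \cref{e:v=vd} and \cref{e:v:attained} of the section are shown to be implied by the hypotheses stated in the theorem, items \cref{thm:conv:i}--\cref{thm:conv::v} follow by collecting the conclusions of \cref{prop:clusters:conv} and \cref{prop:clusters}. So the real work is to translate the clean assumptions $0 \in \dom f^* + \dom g^*$, $v \in \ran(\Id - T)$, and $\zer(\partial f + \partial g(\cdot - v)) \neq \fady$ into the form in which \cref{sec:yay} is set up.

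First I would verify that $v_R = 0$. Recall that, by Rockafellar's domain--range relation for the subdifferential, $\dom f^* \subseteq \overline{\ran \partial f}$ and similarly $\dom g^* \subseteq \overline{\ran \partial g}$. Given $0 \in \dom f^* + \dom g^*$, write $0 = u + w$ with $u \in \dom f^*$ and $w \in \dom g^*$, and approximate $u$ and $w$ by sequences in $\ran \partial f$ and $\ran \partial g$; the sum of these approximating sequences converges to $0$, so $0 \in \overline{\ran \partial f + \ran \partial g}$. By the definition of $v_R$ in \cref{e:def:vD:vR}, this forces $v_R = 0$, and \cref{prop:v:decom}\cref{prop:v:decom:viii} then yields $v = v_D$, which is precisely \cref{e:v=vd}. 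The remaining standing hypothesis \cref{e:v:attained}, namely $Z \neq \fady$, is immediate: the assumption $v \in \ran(\Id - T)$ delivers it via \cref{e:Z:Fix}, and it is also consistent with the assumption $\zer(\partial f + \partial g(\cdot - v)) \neq \fady$ through \cref{prop:fg:zv}\cref{prop:fg:zv:i}.

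With the standing assumptions in force, \cref{prop:clusters:conv}\cref{prop:clusters:conv:i} furnishes a vector $\overline{z} \in X$ such that $\prox_f T^n x \weakly \overline{z}$ and $\overline{z}$ is a minimizer of $f + g(\cdot - v)$, which is exactly \cref{thm:conv:i}. For \cref{thm:conv:ii} I would either appeal directly to \cref{prop:clusters:conv}\cref{prop:clusters:conv:ii} or subtract the strong limit \cref{e:seq:strong:lim}, $\prox_f T^n x - \prox_g \refl_f T^n x \to v$, from the weak limit in \cref{thm:conv:i}. Items \cref{thm:conv::iv} and \cref{thm:conv::v}, the value convergences $f(\prox_f T^n x) \to f(\overline{z})$ and $g(\prox_g \refl_f T^n x) \to g(\overline{z} - v)$, are precisely \cref{prop:clusters}\cref{prop:clusters:iv} and \cref{prop:clusters}\cref{prop:clusters:v} applied at the weak cluster point $\overline{z}$; summing them and invoking \cref{prop:fg:zv}\cref{prop:fg:zv:ii} (after the substitution $y = x + v$ reconciling the definition of $\mu$) delivers \cref{thm:conv::iii}.

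The only nontrivial plumbing step is the derivation of $v_R = 0$ from $0 \in \dom f^* + \dom g^*$; this is where one needs Rockafellar's identity $\overline{\dom f^*} = \overline{\ran \partial f}$ together with a small diagonal argument to move the sum inside the closure. Everything else in the theorem is a direct bookkeeping exercise on top of the Fej\'er-monotonicity analysis carried out in \cref{sec:dynamic} and the cluster-point analysis of \cref{sec:yay}.
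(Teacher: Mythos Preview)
Your proposal is correct and follows essentially the same route as the paper: first deduce $v_R=0$ from $0\in\dom f^*+\dom g^*$ via Rockafellar's identity $\cdom f^*=\cran\partial f$ (the paper writes this as the one-line chain $0\in\dom f^*+\dom g^*\subseteq\overline{\cdom f^*+\cdom g^*}=\overline{\cran\partial f+\cran\partial g}=\overline{R}$, which packages your approximation argument), and then invoke \cref{prop:fg:zv}, \cref{prop:clusters:conv}, and \cref{prop:clusters}\cref{prop:clusters:iii}--\cref{prop:clusters:v} exactly as you describe.
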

	\begin{proof}
		Note that $0\in\dom f^*+\dom g^* \subseteq \overline{\cdom f^*+\cdom g^*}
		= \overline{\cran\partial f+\cran\partial g} = \overline{\ran\partial f
			+\ran \partial g} = \overline{R}$ $\Rightarrow$ $v_R=0$. 
		Now combine 
		with \cref{prop:fg:zv},
		\cref{prop:clusters:conv},
		and \cref{prop:clusters}\cref{prop:clusters:iii}--\cref{prop:clusters:v}. 
	\end{proof} 
	
	Finally, let us ``dualize'' \cref{thm:conv} by applying 
	it to $(f^*,g^{*\veet})$ to obtain that the dual shadows
	solve the normal version of the Fenchel dual problem. 
	
	\begin{corollary}{\bf (dual shadows converge!)}
		\label{thm:conv:dual}
		Let $f,g$ be in $\Gamma_0(X)$. 
		Suppose that $0\in { \dom  f-\dom  g}$,
		that $v\in \ran (\Id-T)$, and that $\zer(\partial f^*
		+\partial (g^{*\veet})(\cdot-v) )\neq \fady$.
		Let $x^*\in X$. 
		Then there exists a vector $\overline{k}\in  X$
		such that  the following hold:
		\begin{enumerate}
			\setlength\itemsep{0.35em}
			\item
			\label{thm:conv:dual:i}
			$\prox_{f^*}T^n x\weakly  \overline{k}$
			and $ \overline{k}$ is a minimizer of
			$f^*+g^{*\veet}(\cdot-v)$.
			\item
			\label{thm:conv:dual:ii}
			$\prox_{g^{*\veet}}\refl_{f^*}T^n x\weakly  \overline{k}-v$
			and $\overline{k}-v$ is a minimizer of
			$f^*(\cdot+v)+g^{*\veet}$.
			\item
			\label{thm:conv:dual:iii}
			$f^*(\prox_{f^*}T^n x)
			+g^{*\veet}(\prox_{g^{*\veet}}\refl_{f^*}T^n x)
			\to 
			\displaystyle\min_{x^*\in X} (f^*(x^*)+g^{*\veet}(x^*-v))$.
			\item
			\label{thm:conv:dual:iv}
			$f^{*}(\prox_{f^*}T^n x)\to f^*(\overline{k})$.
			\item
			\label{thm:conv:dual:v}
			$g^*(-\prox_{g^{*\veet}}\refl_{f^*}T^n x)\to g^*(-\overline{k}+v)$.
		\end{enumerate}
	\end{corollary}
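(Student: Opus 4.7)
The plan is to apply \cref{thm:conv} to the Attouch--Th\'era dual pair $(f^*,g^{*\veet})\in\Gamma_0(X)\times\Gamma_0(X)$ in place of $(f,g)$, and then to match the resulting conclusions with items \cref{thm:conv:dual:i}--\cref{thm:conv:dual:v}.

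First, I would verify the three hypotheses of \cref{thm:conv} for the dual pair. Since $f,g\in\Gamma_0(X)$, so are $f^*$ and $g^{*\veet}$; moreover $(f^*)^*=f$ and, by a direct computation from the definition, $(g^{*\veet})^*=g^{\veet}$, so that $\dom(f^*)^*+\dom(g^{*\veet})^*=\dom f+\dom g^{\veet}=\dom f-\dom g$. The assumption $0\in\dom f-\dom g$ therefore translates into exactly the required $0\in\dom(f^*)^*+\dom(g^{*\veet})^*$; the remaining conditions $v\in\ran(\Id-T)$ and $\zer(\partial f^*+\partial g^{*\veet}(\cdot-v))\neq\fady$ are carried over verbatim.

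Second, I would establish the self-duality of the Douglas--Rachford operator, namely that the DR operator built from $(\partial f^*,\partial g^{*\veet})=(A^{-1},B^{-1\veet})$ coincides with $T$ from \cref{e:def:T}. Using the inverse resolvent identity \cref{e:iri}, which gives $\J{A^{-1}}=\Id-\J{A}$ and hence $\R{A^{-1}}=-\R{A}$, together with the short computation $\J{B^{-1\veet}}(x)=x+\J{B}(-x)$ (obtained by combining \cref{e:iri} with $\J{C^\veet}(x)=-\J{C}(-x)$), one directly verifies
\begin{equation*}
T_{A^{-1},B^{-1\veet}}(x)=x-\J{A^{-1}}(x)+\J{B^{-1\veet}}\R{A^{-1}}(x)=\J{A}(x)-\R{A}(x)+\J{B}\R{A}(x)=T(x).
\end{equation*}
Consequently the minimal displacement vector associated with the dual pair coincides with $v$ from \cref{e:defv}, and the entire Fej\'er-monotone machinery is driven by the very same iterates $(T^n x)_\nnn$ used for the primal problem.

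With the hypotheses verified and the operator identified, a direct invocation of \cref{thm:conv} applied to $(f^*,g^{*\veet})$ produces a vector $\overline{k}\in X$ for which the conclusions of that theorem specialize at once to items \cref{thm:conv:dual:i}--\cref{thm:conv:dual:iv} of the corollary. For the final item \cref{thm:conv:dual:v}, I would use the tautology $g^{*\veet}(\cdot)=g^*(-\cdot)$ to rewrite the value-convergence statement \cref{thm:conv::v} as $g^*(-\prox_{g^{*\veet}}\refl_{f^*}T^n x)\to g^*(-\overline{k}+v)$. The main (though ultimately routine) obstacle is the self-duality calculation: one must carefully shepherd the $\veet$-operation through both the resolvents and the reflected resolvents; after that step everything reduces to straightforward bookkeeping.
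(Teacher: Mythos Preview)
Your proposal is correct and follows essentially the same route as the paper's own proof: verify that $(f^*,g^{*\veet})\in\Gamma_0(X)\times\Gamma_0(X)$, translate the domain condition via $(f^*)^*=f$ and $(g^{*\veet})^*=g^{\veet}$, establish the self-duality $T_{(\partial f^*,\partial g^{*\veet})}=T$, and then invoke \cref{thm:conv}. The only cosmetic difference is that the paper obtains the self-duality by citing \cite[Lemma~3.6 on page~133]{EckThesis}, whereas you carry out the resolvent computation directly; your derivation is correct and arguably more self-contained.
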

	\begin{proof}
		Observe that, likewise $f$ and $g$,
		both $f^*$ and $g^{*\veet}$ are convex, lower semicontinuous 
		and proper. Moreover, $(f^*)^*=f, (g^{*\veet})^*=g^\veet$.
		Hence 
		\begin{equation}
			\label{e:dom:conj}
			\dom (f^*)^*+\dom (g^{*\veet})^*=\dom f-\dom g.
		\end{equation}
		Finally, observe that 
		$(\partial f^*,\partial (g^{*\veet}))
		=((\partial f)^{-1}, (\partial g)^{-\oveet})$.
		Consequently, \cref{e:def:T:fg}
		and \cite[Lemma~3.6~on~page~133]{EckThesis}
		implies that 
		\begin{equation}
			\label{e:T:sd}
			T_{(\partial f^*,\partial g^{*\veet})}=T.
		\end{equation}
		\cref{thm:conv:dual:i}--\cref{thm:conv:dual:v}:
		Combine \cref{e:dom:conj},
		\cref{e:T:sd}
		and \cref{thm:conv} 
		applied with $(f,g)$ replaced by $(f^*, g^{*\veet})$.
	\end{proof}

	\section*{Acknowledgments}
	
	The research of HHB and WMM was partially supported by Discovery Grants
	of the Natural Sciences and Engineering Research Council of
	Canada.
	
  \small

\end{document}